\newcommand{\N }{\mathbb{N}}
\newcommand{\pr}[1]{\mathbb{P}^{#1}}
\theoremstyle{plain} 
\newtheorem{thm}{Theorem}[section]
\newtheorem{fact}[thm]{Fact}
  \newtheorem{prop}[thm]{Proposition} %
  \newtheorem{lemma}[thm]{Lemma} %
\theoremstyle{definition} 
  \newtheorem{df}[thm]{Definition}
\newtheorem*{notation}{Notation}
  \newtheorem{exmp}[thm]{Example}
  \newtheorem{remark}[thm]{Remark}%
\title{Configuration of five points in $\mathbb P^3$ and their limits}
\author{Naoya SHIMAMOTO \\ Chiba Institute of Technology}
\date{}
\begin{document}

\maketitle

\begin{abstract}
We give a classification of ordered five points in $\mathbb P^3$ under the diagonal action of $GL_4$ over an algebraically closed field of characteristic $0$, by an explicit description of the diagonal action of $GL_4$ on the quintuple of the projective varieties $\mathbb P^3$. 
This is the second simplest setting, where a reductive subgroup of $H$ of $G$ has an open orbit in a (generalised) flag variety $X$ of $G$ but $\#(H\backslash X)=\infty$. 
The closure relations among infinitely many orbits are also given. 
\end{abstract}

\tableofcontents

\section{Introduction}

The object of this article is a description of the orbits of $GL_4$ on the quintuple of the projective spaces $\mathbb P^3$ over an algebraically closed field $\mathbb K$ with characteristic $0$. 
A distinguished feature of our setting is that the group $GL_4$ has an open orbit in $(\mathbb P^3)^5$, however, there are infinitely many orbits of lower dimensions. 
We give an explicit description of these infinitely many orbits by using invariant theory and by combinatorial techniques. 
We also determine the closure relations among orbits. 

Our strategy to describe orbits is divided into the following $2$ steps: to introduce a $GL_4$ equivariant fibration of $(\mathbb P^3)^5$ onto a finite set combinatorially, and further to decompose its fibres via the invariant theory. 

The first step is to give a coarse classification of configurations of $5$ points in $\mathbb P^3$ by using a family of ranks. We define the following map: 
\[\pi\colon (\mathbb P^3)^5\to Map(2^{\{1,2,3,4,5\}},\mathbb N),\ v=([v_i])_{i=1}^5\mapsto\left(I\mapsto \dim\mathrm{Span}\{v_i\}_{i\in I}\right). \]
For an element of $v\in(\mathbb P^3)^5$, we say $\pi(v)\in Map(2^{\{1,2,3,4,5\}},\mathbb N)$ the rank matrix of $v$. 

For $\varphi\in Map(2^{\{1,2,3,4,5\}},\mathbb N)$, we call a maximal element in $\varphi^{-1}(r)\subset 2^{\{1,2,3,4,5\}}$ with respect to the inclusion relations as an $r$-face of $\varphi$ (in particular, we call $1,2$ or $3$-faces as vertices, edges or faces with respectively). We define the rank type of $\varphi\in Map(2^{\{1,2,3,4,5\}},\mathbb N)$ as the tuple of numbers of vertices, edges and faces. We describe the image of $\pi$ in $Map(2^{\{1,2,3,4,5\}},\mathbb N)$ by using rank types of $\varphi\in Map(2^{\{1,2,3,4,5\}},\mathbb N)$. 

\begin{thm}\label{thm:class} A map $\varphi\in Map(2^{\{1,2,3,4,5\}},\mathbb N)$ belongs to the image of the map $\pi\colon (\mathbb P^3)^5\to Map(2^{\{1,2,3,4,5\}},\mathbb N)$ if and only if the rank type of $\varphi$ is one of ones listed in Table \ref{tab:type} (see Theorems \ref{thm:split} and \ref{thm:parameter}). 
\end{thm}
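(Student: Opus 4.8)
The strategy is to identify $\mathrm{Im}\,\pi$ with an explicit family of matroid rank functions and then to run a finite combinatorial enumeration.

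Fixing representatives $v_i\in\mathbb K^{4}$ of $[v_i]$, the map $\pi(v)$ sends $I$ to $\dim\mathrm{Span}\{v_i\}_{i\in I}$, which is exactly the rank of $I$ in the matroid on $\{1,\dots,5\}$ represented by the columns $v_1,\dots,v_5$; it does not depend on the choice of representatives. As each $v_i\neq 0$ and linear-algebraic rank is monotone, submodular, and bounded by $4=\dim\mathbb K^{4}$, every $\varphi\in\mathrm{Im}\,\pi$ satisfies $\varphi(\emptyset)=0$, $\varphi(\{i\})=1$, $\varphi(I)\le\varphi(J)$ for $I\subseteq J$, $\varphi(I\cup J)+\varphi(I\cap J)\le\varphi(I)+\varphi(J)$, and $\varphi\le 4$; that is, $\varphi$ is the rank function of a loopless matroid of rank at most $4$ on five elements. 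Conversely every such matroid belongs to $\mathrm{Im}\,\pi$: there are only finitely many of them, each is realised by an explicit quintuple of points in $\mathbb P^{3}$ (possible since $\mathbb K$ is infinite; the classical obstructions to representability, such as the Fano and V\'amos matroids, involve at least seven elements), and this is in effect what Theorems~\ref{thm:split} and~\ref{thm:parameter} provide. Thus $\mathrm{Im}\,\pi$ is exactly the set of rank functions of loopless rank-$\le 4$ matroids on $\{1,\dots,5\}$, and the theorem reduces to the claim that the rank types of these matroids are exactly the triples of Table~\ref{tab:type}.

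For a matroid rank function the maximal elements of $\varphi^{-1}(r)$ are precisely the rank-$r$ flats (a set and its closure have the same rank, and a proper containment of flats strictly increases rank), so the vertices, edges and faces of $\varphi$ are the points, lines and planes of the associated simple matroid. I would stratify by the partition of $\{1,\dots,5\}$ into parallel classes: if it has $k$ blocks then $1\le k\le 5$, and the simplification is a simple matroid on $k$ elements of the same rank, the passage to the simplification preserving the lattice of flats. Then one runs through the short classification of simple matroids on at most five elements — the matroids of rank $\le 1$; $U_{2,n}$ in rank $2$; in rank $3$ the uniform matroid together with the configurations having one $3$-point line, two concurrent $3$-point lines, and one $4$-point line; in rank $4$ the uniform matroid $U_{4,5}$ together with ``one coplanar quadruple'' and ``one collinear triple'' — counting in each case the rank-$2$ and rank-$3$ flats. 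Exhaustiveness of this list comes from standard incidence arguments (two $3$-point lines must be concurrent; a $3$-point line forces a coplanar quadruple; at most two quadruples can be coplanar; and so on). Performing the count over each stratum type produces precisely the triples in Table~\ref{tab:type}; and each of those triples is realised by the matroid appearing in its stratum, hence by an honest quintuple of points by the first step, so the list is both necessary and sufficient.

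The linear algebra of the first step is routine and the representability of matroids on five elements is classical; the real work — and the place most likely to go wrong — is the bookkeeping of the second step: one must handle the small-matroid classification consistently across all parallel-class partitions and, especially in rank $4$, count flats \emph{exactly}, since a coplanar quadruple or a collinear triple alters the number of maximal rank-$3$ sets in a way that is easy to miscount. The payoff is to confirm that exactly the triples of Table~\ref{tab:type} occur, with none missing and none extraneous.
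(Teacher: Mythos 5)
Your proposal takes a genuinely different route from the paper. The paper first introduces the $GL_4$-equivariant splitting map $\varpi\colon(\mathbb P^3)^5\to\mathcal P_5$ coming from the decomposition of a quintuple into indecomposable representations of the star-shaped quiver, invokes Kac's theorem (Fact~\ref{thm:splittingdecomp}) to describe $\mathrm{Image}(\varpi)$, factors $\varpi=\rho\circ\pi$ (Lemma~\ref{thm:rho}), and then reads off the rank matrices and their types from the explicit formula of Proposition~\ref{thm:rhobij} on $\mathcal P'_{4,5}$ (Theorem~\ref{thm:split}) plus a direct case analysis on the three remaining splitting types $\{5^3\},\{5^2\},\{1^1,4^2\}$ (Theorem~\ref{thm:parameter}). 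You bypass this entirely and observe that $\pi(v)$ is the rank function of the column matroid of $v$, so $\mathrm{Image}(\pi)$ is exactly the set of rank functions of loopless, $\mathbb K$-representable matroids of rank $\le 4$ on $[5]$; you then appeal to the classical fact that every matroid on at most five elements is representable over any infinite field (indeed over $\mathbb Q$), identify $r$-faces with rank-$r$ flats, and enumerate by the parallel-class partition and the short list of simple matroids of rank $\le 4$ on $\le 5$ points. Both approaches reduce to a finite case analysis, but the two combinatorial invariants used to organize it are not the same: the paper's $\varpi$ records the matroid's \emph{connected} (direct-sum) decomposition, which is also the workhorse for Theorems~\ref{thm:orbit} and~\ref{thm:closure} later, while your simplification/parallel-class stratification is tailored to counting flats and is more self-contained for this one statement. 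Your supporting facts (representability of $5$-element matroids, the classification of small simple matroids, the identification of $r$-faces with rank-$r$ flats) are all correct; the spot-checks I ran on the rank-type counts (e.g.\ $(5,10,7)$ for the ``one coplanar quadruple,'' $(5,8,5)$ for the ``one collinear triple,'' $(5,6)$ for ``two concurrent $3$-point lines'') agree with Table~\ref{tab:type}, so the approach is sound, though as you say the proof is incomplete until the flat-counting for every stratum is carried out. One small caveat: as literally stated, Theorem~\ref{thm:class} is an ``iff'' over all of $Map(2^{[5]},\mathbb N)$; your argument (like the paper's) establishes the forward direction and, for the converse, that every listed type is \emph{realized}, which is what is actually meant — an arbitrary non-submodular $\varphi$ with a coincidentally matching type would not be in $\mathrm{Image}(\pi)$, but this imprecision is in the theorem's phrasing, not in your proof.
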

\begin{table}[h]
\centering
\caption{Types of rank matrices in $(\mathbb P^3)^5$}
\label{tab:type}
\begin{tabular}{c|c|c|c|c|c|c}
\begin{tikzpicture}
\fill(0,0) circle (0.06) node[above right]{\scalebox{0.7}{$5$}};
\end{tikzpicture}
&
\begin{tikzpicture}
\fill(0,0) circle (0.06);
\fill(0.5,0.2) circle (0.06) node[above right]{\scalebox{0.7}{$4$}};
\end{tikzpicture}
&
\begin{tikzpicture}
\fill(0,0) circle (0.06) node[above left]{\scalebox{0.7}{$2$}};
\fill(0.5,0.2) circle (0.06) node[above right]{\scalebox{0.7}{$3$}};
\end{tikzpicture}
&
\begin{tikzpicture}
\fill(0,0) circle (0.06) node[above left]{\scalebox{0.7}{$2$}};
\fill(0.25,0.1) circle (0.06);
\fill(0.5,0.2) circle (0.06) node[above right]{\scalebox{0.7}{$2$}};
\draw(-0.25,-0.1)--(0.75,0.3);
\end{tikzpicture}
&
\begin{tikzpicture}
\fill(0,0) circle (0.06) node[above left]{\scalebox{0.7}{$3$}};
\fill(0.25,0.1) circle (0.06);
\fill(0.5,0.2) circle (0.06);
\draw(-0.25,-0.1)--(0.75,0.3);
\end{tikzpicture}
&
\begin{tikzpicture}
\fill(0,0) circle (0.06)  node[above left]{\scalebox{0.7}{$2$}};
\fill(0.2,0.08) circle (0.06);
\fill(0.4,0.16) circle (0.06);
\fill(0.6,0.24) circle (0.06);
\draw(-0.2,-0.08)--(0.8,0.32);
\end{tikzpicture}
&
\begin{tikzpicture}
\fill(0,0) circle (0.06);
\fill(0.15,0.06) circle (0.06);
\fill(0.3,0.12) circle (0.06);
\fill(0.45,0.18) circle (0.06);
\fill(0.6,0.24) circle (0.06);
\draw(-0.2,-0.08)--(0.8,0.32);
\end{tikzpicture}
\\
$(\emptyset)$ & $(2)$a$\times 5$ & $(2)$b$\times 10$ & $(3)$a$\times 15$ & $(3)$b$\times 10$ & $(4)\times 5$  & (5)
 \\\hline
\begin{tikzpicture}
\fill(0,0) circle (0.06) node[above left]{\scalebox{0.7}{$2$}};
\fill(0.6,0) circle (0.06) node[above right]{\scalebox{0.7}{$2$}};
\fill(0.3,0.3) circle (0.06);
\end{tikzpicture}
&
\begin{tikzpicture}
\fill(0,0) circle (0.06) node[above left]{\scalebox{0.7}{$3$}};
\fill(0.6,0) circle (0.06);
\fill(0.3,0.3) circle (0.06);
\end{tikzpicture}
&
\begin{tikzpicture}
\fill(0,0) circle (0.06);
\fill(0.3,0) circle (0.06);
\fill(0.6,0) circle (0.06);
\fill(0.3,0.3) circle (0.06)  node[above]{\scalebox{0.7}{$2$}};
\draw (-0.2,0)--(0.8,0);
\end{tikzpicture}
&
\begin{tikzpicture}
\fill(0,0) circle (0.06)  node[above left]{\scalebox{0.7}{$2$}};
\fill(0.3,0) circle (0.06);
\fill(0.6,0) circle (0.06);
\fill(0.3,0.3) circle (0.06);
\draw (-0.2,0)--(0.8,0);
\end{tikzpicture}
&
\begin{tikzpicture}
\fill(0,0) circle (0.06);
\fill(0.2,0) circle (0.06);
\fill(0.6,0) circle (0.06);
\fill(0.4,0) circle (0.06);
\fill(0.3,0.3) circle (0.06);
\draw (-0.2,0)--(0.8,0);
\end{tikzpicture}
&
\begin{tikzpicture}
\fill(0,0) circle (0.06)node[above left]{\scalebox{0.7}{$2$}};
\fill(0.6,0) circle (0.06);
\fill(0.3,0.51) circle (0.06);
\fill(0.3,0.17) circle (0.06);
\end{tikzpicture}
&
\begin{tikzpicture}
\fill(0,0) circle (0.06);
\fill(0.3,0) circle (0.06);
\fill(0.6,0) circle (0.06);
\fill(0.2,0.2) circle (0.06);
\fill(0.4,0.4) circle (0.06);
\draw (-0.2,0)--(0.8,0);
\draw (-0.15,-0.15)--(0.55,0.55);
\end{tikzpicture}
\\
$(3,3)$a$\times 15$ & $(3,3)$b$\times 10$ & $(4,4)$a$\times 10$ & $(4,4)$b$\times 30$ & $(5,5)\times 5$ & $(4,6)\times 10$ & $(5,6)\times 15$
 \\\hline
\begin{tikzpicture}
\fill(0.25,0) circle (0.06);
\fill(0.55,0) circle (0.06);
\fill(0.85,0) circle (0.06);
\fill(0.2,0.2) circle (0.06);
\fill(0.4,0.4) circle (0.06);
\draw (-0.2,0)--(1,0);
\draw (-0.15,-0.15)--(0.55,0.55);
\end{tikzpicture}
&
\begin{tikzpicture}
\fill(0,0) circle (0.06);
\fill(0.6,0) circle (0.06);
\fill(0.3,0.51) circle (0.06);
\fill(0.3,0.18) circle (0.06);
\draw(-0.1,-0.17)--(0.4,0.68);
\draw(0.7,-0.17)--(0.2,0.68);
\draw(-0.2,-0.12)--(0.6,0.36);
\draw(0.8,-0.12)--(0,0.36);
\draw(-0.2,0)--(0.8,0);
\draw(0.3,-0.2)--(0.3,0.7);
\fill(0.8,0.2) circle (0.06);
\end{tikzpicture}
&
\scalebox{0.5}{
\begin{tikzpicture}
\fill(-1,0,0) circle (0.12) node[above left]{\scalebox{1.4}{$2$}};
\fill(0,{sqrt(2)},0) circle (0.12);
\fill(1/2,0,{sqrt(3)/2}) circle (0.12);
\fill(1/2,0,{-sqrt(3)/2}) circle (0.12);
\draw (-1,0,0)--(0,{sqrt(2)},0)--(1/2,0,{-sqrt(3)/2})--(1/2,0,{sqrt(3)/2})--(-1,0,0);
\draw (0,{sqrt(2)},0)--(1/2,0,{sqrt(3)/2});
\draw[dashed] (-1,0,0)--(1/2,0,{-sqrt(3)/2});
\end{tikzpicture}}
&
\scalebox{0.5}{
\begin{tikzpicture}
\fill(-1,0,0) circle (0.12);
\fill(0,{sqrt(2)},0) circle (0.12);
\fill(1/2,0,{sqrt(3)/2}) circle (0.12);
\fill(1/2,0,{-sqrt(3)/2}) circle (0.12);
\fill(-1/4,0,{sqrt(3)/4}) circle (0.12);
\draw (-1,0,0)--(0,{sqrt(2)},0)--(1/2,0,{-sqrt(3)/2})--(1/2,0,{sqrt(3)/2})--(-1,0,0);
\draw (0,{sqrt(2)},0)--(1/2,0,{sqrt(3)/2});
\draw[dashed] (-1,0,0)--(1/2,0,{-sqrt(3)/2});
\end{tikzpicture}}
&
\scalebox{0.5}{
\begin{tikzpicture}
\fill(-1,0,0) circle (0.12);
\fill(0,{sqrt(2)},0) circle (0.12);
\fill(1/2,0,{sqrt(3)/2}) circle (0.12);
\fill(1/2,0,{-sqrt(3)/2}) circle (0.12);
\draw (-1,0,0)--(0,{sqrt(2)},0)--(1/2,0,{-sqrt(3)/2})--(1/2,0,{sqrt(3)/2})--(-1,0,0);
\draw (0,{sqrt(2)},0)--(1/2,0,{sqrt(3)/2});
\draw[dashed] (-1,0,0)--(1/2,0,{-sqrt(3)/2});
\fill[lightgray] (-1,0,0)--(1/2,0,{sqrt(3)/2})--(0,{sqrt(2)},0)--cycle;
\fill(-1/6,{sqrt(2)/3},{sqrt(3)/6}) circle (0.12);
\end{tikzpicture}}
&
\scalebox{0.5}{
\begin{tikzpicture}
\fill(-1,0,0) circle (0.12);
\fill(0,{sqrt(2)},0) circle (0.12);
\fill(1/2,0,{sqrt(3)/2}) circle (0.12);
\fill(1/2,0,{-sqrt(3)/2}) circle (0.12);
\fill(1,1.5,1) circle (0.12);
\draw (-1,0,0)--(0,{sqrt(2)},0)--(1/2,0,{-sqrt(3)/2})--(1/2,0,{sqrt(3)/2})--(-1,0,0);
\draw (0,{sqrt(2)},0)--(1/2,0,{sqrt(3)/2});
\draw[dashed] (-1,0,0)--(1/2,0,{-sqrt(3)/2});
\end{tikzpicture}}
\\
$(5.8)\times 10$ & $(5,10)$ & $(4,6,4)\times 10$ & $(5,8,5)\times 10$ & $(5,10,7)\times 5$ & $(5,10,10)$ 
\end{tabular}
\end{table}

\begin{remark} 
\begin{enumerate}
\item For $v=[v_i]_{i=1}^5\in (\mathbb P^3)^5$, the type of $\pi(v)$ is $(A_1,A_2,A_3)$ if  
\[A_r=\#\left\{\langle v_i\rangle_{i\in I}\subset \mathbb K^n\left|\ I\subset \{1,2,3,4,5\},\ \dim\langle v_i\rangle_{i\in I}=r\right.\right\}\ \ \ (r=1,2,3). \]
For instance, $(A_1,A_2,A_3)=(5,10,7)$ in Table \ref{tab:type} implies that there are $5$ black points, $10$ edges connecting the black points, and $7$ faces respectively. 
\item The symbols in Table \ref{tab:type} visualise the configurations in some coordinate $\mathbb K^3\hookrightarrow \mathbb P^3$. 
For instance, if the rank matrix of $v=[v_i]_{i=1}^5$ is of type $(4,4)$a, then $\dim\langle v_i\rangle_{i=1}^5=3$, a unique distinct triple shares a $2$-dimensional space, and the rest two points coincide. 
\item The list of Table \ref{tab:type} coincides with the list of configurations of unordered $5$ points in $\mathbb P^3$. The number written after each type denotes the number of configurations of \emph{ordered} $5$ points. 
To distinguish rank matrices which have the same type but are different as configurations, we mark on them like types $(3,3)$a and $(3,3)$b. 
\end{enumerate}
\end{remark}
The second step is to describe $G$-orbits in each fibres of $\pi$. Since the map $\pi\colon (\mathbb P^3)^5\to Map(2^{\{1,2,3,4,5\}},\mathbb N)$ is $GL_4$-invariant, if we describe the orbits in each fibre $\pi^{-1}(\varphi)$ for $\varphi\in\mathrm{Image}(\pi)$ classified in Theorem \ref{thm:class}, we obtain a complete description of $\mathrm{diag}(GL_4)\setminus(\mathbb P^3)^5$. 

\begin{thm}\label{thm:orbit} For each map $\varphi\in Map(2^{\{1,2,3,4,5\}},\mathbb N)$ classified in Theorem \ref{thm:class}, consider the fibre $\pi^{-1}(\varphi)$ in $(\mathbb P^3)^5$ consisting of all elements with the rank matrices $\varphi$. 
\begin{enumerate}
\item If $\varphi$ is \emph{not} of types $(4)$, $(5)$, $(5,5)$, $(5,8)$, or $(5,10)$, then $\pi^{-1}(\varphi)$ is a single $G$-orbit (see Theorems \ref{thm:split} and \ref{thm:parameter})
\item If $\varphi$ is one of the types $(4)$, $(5)$, $(5,5)$, $(5,8)$, or $(5,10)$, then $ G\backslash \pi^{-1}(\varphi)$ is bijective with some open dense subset of $\mathbb P^1$, $\mathbb P^1\times\mathbb P^1$, $\mathbb P^1$, $\mathbb P^1$, or $\mathbb P^2$ respectively (see Theorem \ref{thm:parameter}). 
\end{enumerate}
\end{thm}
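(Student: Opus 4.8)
The plan is to deduce Theorem~\ref{thm:orbit} from the more detailed Theorems~\ref{thm:split} and~\ref{thm:parameter}, whose content is a finite list of explicit normalisations organised by the dimension $d=\varphi(\{1,2,3,4,5\})$ of the common span. The first move is a geometric reduction. Since $GL_4$ acts transitively on the Grassmannian of $d$-planes in $\K^4$, the map $v\mapsto\mathrm{Span}\{v_i\}$ exhibits $\pi^{-1}(\varphi)$ as a $GL_4$-equivariant bundle over that Grassmannian; fixing a $d$-plane $W_0$, its stabiliser $P\subset GL_4$ acts on $(\mathbb P(W_0))^5$, and this action factors through, and is onto, $\mathrm{Aut}(\mathbb P(W_0))=PGL_d$. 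Consequently $G\backslash\pi^{-1}(\varphi)$ is in bijection with the set of $PGL_d$-orbits on ordered quintuples in $\mathbb P^{d-1}$ that span $\mathbb P^{d-1}$ and have rank matrix $\varphi$, and it suffices to treat such quintuples for $d=1,2,3,4$.

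For $d=1$ the quintuple is a single point of $\mathbb P^0$, so $\pi^{-1}(\varphi)$ is one orbit (type $(\emptyset)$). For $d=2$ one uses that $PGL_2$ is sharply $3$-transitive on $\mathbb P^1$: move three of the distinct points to $0,1,\infty$ (two suffice, by $2$-transitivity, when only two are distinct). If $\varphi$ has at most three distinct points (types $(2)$a, $(2)$b, $(3)$a, $(3)$b), no freedom is left and $\pi^{-1}(\varphi)$ is one orbit. If $\varphi$ has four distinct points (type $(4)$), the location of the fourth is a free parameter in $\mathbb P^1\setminus\{0,1,\infty\}$ which, by sharp transitivity, is a complete invariant. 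If $\varphi$ has five distinct points (type $(5)$), the remaining ordered pair runs over $\{(\lambda,\mu):\lambda,\mu\notin\{0,1,\infty\},\ \lambda\ne\mu\}$, an open dense subset of $\mathbb P^1\times\mathbb P^1$.

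For $d=3,4$, Theorem~\ref{thm:class} tells us the spanning types are, for $d=3$: $(3,3)$a, $(3,3)$b, $(4,4)$a, $(4,4)$b, $(4,6)$, $(5,6)$, $(5,5)$, $(5,8)$, $(5,10)$; and for $d=4$: $(4,6,4)$, $(5,8,5)$, $(5,10,7)$, $(5,10,10)$. For each type one writes down a standard model and computes the stabiliser of its ``rigid part'', using that $PGL_n$ is sharply transitive on ordered projective frames of $\mathbb P^{n-1}$, transitive on linear subspaces, and $3$-transitive on the points of any line. For $(5,10,10)$ the five points form a frame of $\mathbb P^3$, so $\pi^{-1}(\varphi)$ is one orbit. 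For $(4,6,4)$, $(5,8,5)$, $(5,10,7)$ one normalises four points in general position to $e_1,\dots,e_4$; the residual stabiliser is the diagonal torus $(\K^*)^4/\K^*$, which acts transitively on, respectively, a coordinate vertex, a punctured coordinate line $\{[a:b]:ab\ne0\}$, and a punctured coordinate plane $\{[a:b:c]:abc\ne0\}$, giving one orbit each. For the planar types $(3,3)$a,b, $(4,4)$a,b, $(4,6)$, $(5,6)$ the same scheme leaves a residual group that still acts transitively on the target of the last free point, so these too are single orbits. Finally, for $(5,5)$, $(5,8)$, $(5,10)$ the residual group becomes trivial on the locus of the last free point(s), which ranges over an open dense subset of $\mathbb P^1$, $\mathbb P^1$, and $\mathbb P^2$ respectively; density is immediate because the excluded locus (coincidences, or unwanted collinearities) is a proper closed subvariety. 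Assembling the list yields Theorems~\ref{thm:split} and~\ref{thm:parameter}, hence Theorem~\ref{thm:orbit}.

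The reduction step and the case $d\le2$ are routine. The real work, and the point where no uniform argument is available, is the planar case $d=3$: the matroid (the collinearity data) of a configuration of points in $\mathbb P^2$ does not in general determine its $PGL_3$-orbit, so one cannot decide combinatorially which types are rigid. One must instead carry out the stabiliser computation type by type; it is exactly this computation that singles out $(5,5)$, $(5,8)$, $(5,10)$ as the types with a surviving continuous parameter and identifies the parameter spaces with open subsets of $\mathbb P^1$, $\mathbb P^1$, and $\mathbb P^2$.
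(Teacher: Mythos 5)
Your proposal is correct in outline and takes a genuinely different route from the paper's, so a comparison is in order.

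The paper organises the problem around two invariants: the rank matrix $\pi$ and the indecomposable-splitting map $\varpi$ (with $\varpi=\rho\circ\pi$, Lemma~\ref{thm:rho}). Its key tool is Kac's theorem on quiver representations (Fact~\ref{thm:splittingdecomp}), which delivers \emph{for free} that every fibre of $\varpi$ over the subset $\mathcal P_{4,5}'$ is a single $GL_4$-orbit; this instantly covers $(\emptyset)$, $(2)$a, $(2)$b, $(3,3)$a, $(3,3)$b, $(4,4)$a, $(4,6,4)$, $(5,8,5)$, $(5,10,7)$, $(5,10,10)$ via Theorem~\ref{thm:split}. Only the three ``bad'' splitting types $\{5^3\},\{5^2\},\{1^1,4^2\}$ then require the explicit normal forms of Section~2.3 (Theorem~\ref{thm:parameter}). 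In contrast, you stratify by the dimension $d$ of the common span, reduce over the Grassmannian to a $PGL_d$-problem on $(\mathbb P^{d-1})^5$, and then argue by normal forms and residual stabilisers uniformly. Your Grassmannian reduction is clean and valid (the unipotent radical of the stabiliser $P$ of $W_0$ acts trivially on $\mathbb P(W_0)$, so the action descends to $PGL_d$), and it puts $(4)$ and $(5)$ (the $d=2$ cases) on the same elementary footing as the $d=3$ ones, which the paper handles by a separate determinant/invariant-theory argument. What the quiver-theoretic route buys the paper is that it never has to verify case-by-case that the residual group acts transitively for the rigid planar and spatial types; what your route buys is elementarity (no appeal to Kac) and a uniform picture $d=1,2,3,4$ that makes the three parametrised families $(5,5),(5,8),(5,10)$ visibly the ones where the residual stabiliser degenerates.

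One caveat: as you yourself flag, the $d=3$ case is where the load-bearing computations sit, and your write-up asserts rather than verifies that the residual group acts transitively for $(3,3)$a,b, $(4,4)$a,b, $(4,6)$, $(5,6)$, and becomes inert on the free parameter for $(5,5),(5,8),(5,10)$. These claims are in fact all true (e.g.\ for $(5,5)$ with normal form $[e_3,e_1,e_2,e_1+e_2,p]$ the residual stabiliser in $PGL_3$ is the one-parameter torus $\{\mathrm{diag}(a,a,b)\}$, which acts trivially on the line $\langle e_1,e_2\rangle$; for $(5,8)$ and $(5,10)$ the four points already form a projective frame of $\mathbb P^2$, so the residual stabiliser is trivial and the last point is the invariant), so the gap is one of exposition rather than substance. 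If you carry out those ten short stabiliser computations, the argument is a complete, self-contained alternative proof of Theorems~\ref{thm:split} and~\ref{thm:parameter}, hence of Theorem~\ref{thm:orbit}.
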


According to this description of all orbits, we can characterise the closure relations by using the following  partial orders defined in $\mathrm{Image}(\pi)\subset Map(2^{\{1,2,3,4,5\}},\mathbb N)$ by using combinatorial properties of them. 
\begin{df} \label{df:partial}
For the subset $\mathrm{Image}(\pi)$ of $Map(2^{\{1,2,\ldots,m\}},\mathbb N)$, we define partial orders $\leq$, $\preceq$, and $\prec$ as below. 
\begin{enumerate}
\item We write $\psi\leq\varphi$ for $\varphi,\psi\in\mathrm{Image}(\pi)$ if $\psi(I)\leq \varphi(I)$ for all $I\subset\{1,2,\ldots,m\}$. 
\item Define $\prec$ as the partial order generated by the relation $\psi\prec\varphi$ for $\psi,\varphi\in\mathrm{Image}(\pi)$ which satisfy the following conditions: $\rho(\varphi)=\{I_k^{r_k}\}_{k=1}^l$, $\rho(\psi)=\{I_k^{s_k}\}_{k=1}^l$ (see Definition \ref{df:rho}), $\psi\leq \varphi$, and there exists an integer $1\leq k\leq l$ such that $s_k<r_k$. 
\item Define $\preceq$ as the partial order generated by the relation $\psi\preceq\varphi$ for $\psi,\varphi\in\mathrm{Image}(\pi)$ which satisfy one of the following conditions: 
\begin{enumerate}
\item There exists an $r$-face $J\subset\{1,2,\ldots,m\}$ such that $\psi(I)=\varphi(I\cup J)+\varphi(I\cap J)-\varphi(J)$ hlods for all $I\subset\{1,2,\ldots,m\}$. 
\item $\rho(\varphi)=\{I_k^{r_k}\}_{k=1}^l$ (see Definition \ref{df:rho}), $\psi\leq\varphi$, and $\psi|_{I_k}= \varphi|_{I_k}$ for all $1\leq k\leq l$. 
\item $\psi\prec\varphi$. 
\end{enumerate}
\end{enumerate}
\end{df}
\begin{thm} \label{thm:closure} For $\varphi\in \mathrm{Image}(\pi)\subset Map(2^{\{1,2,\ldots,m\}},\mathbb N)$ and an orbit $\mathcal O\subset \pi^{-1}(\varphi)\subset (\mathbb P^3)^5$, we have the following criterions for $\psi\in\mathrm{Image}(\pi)$ whether $\pi^{-1}(\psi)\subset(\mathbb P^3)^5$ is in the closure of the orbit $\mathcal O$ (see Theorems \ref{thm:clsingle} and \ref{thm:clparameter}): 
\begin{enumerate}
\item If $\mathcal O=\pi^{-1}(\varphi)$ (see the classification in Theorem \ref{thm:orbit}), then $\overline{\mathcal O}=\coprod_{\psi\leq \varphi}\pi^{-1}(\psi)$. 
\item If $\mathcal O\subsetneq \pi^{-1}(\varphi)$, then 
\begin{enumerate}
\item $\pi^{-1}(\psi)\cap \overline{\mathcal O}\neq\emptyset$ if and only if $\psi\preceq\varphi$; 
\item $\pi^{-1}(\psi)\subset \overline{\mathcal O}$ if and only if $\psi\preceq\varphi$ and $\pi^{-1}(\psi)$ is a single orbit, or $\psi\prec\varphi$. 
\end{enumerate}
\end{enumerate}
\end{thm}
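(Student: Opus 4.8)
I would set up the proof as a tug-of-war between an upper bound for $\overline{\mathcal O}$ coming from semicontinuity and a lower bound coming from a short list of explicit one–parameter degenerations built from the orbit representatives of Theorems~\ref{thm:split} and~\ref{thm:parameter}. The first step is the upper bound: for $v\in(\mathbb P^3)^5$ and $I\subset\{1,\dots,5\}$ the quantity $\varphi(I)(v)=\dim\mathrm{Span}\{v_i\}_{i\in I}$ is the rank of a $4\times|I|$ matrix, hence lower semicontinuous in $v$; so if $w\in\overline{\mathcal O}$ with $\mathcal O\subset\pi^{-1}(\varphi)$ then $\pi(w)\le\varphi$, which gives $\overline{\mathcal O}\subseteq\coprod_{\psi\le\varphi}\pi^{-1}(\psi)$ and the necessity of $\psi\le\varphi$ everywhere. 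One records at the same time that $\overline{\mathcal O}$ is irreducible, $G$-stable, closed, and a union of orbits, and that each $\pi^{-1}(\psi)$ is irreducible — a single orbit, or, for the five exceptional types of Theorem~\ref{thm:orbit}, a bundle over a rational modulus space. With these facts the remaining ``$\supseteq$'' assertions reduce to showing, for the relevant $\psi$, that $\overline{\mathcal O}\cap\pi^{-1}(\psi)$ is nonempty (for the ``meets'' statements) or dense in $\pi^{-1}(\psi)$, hence all of $\pi^{-1}(\psi)$ since it is also closed in the irreducible $\pi^{-1}(\psi)$ (for the ``contained'' statements).

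The second step is the construction of degenerations, one family of one–parameter subgroups of $GL_4$ for each generating move of Definition~\ref{df:partial}. Given an $r$-face $J$ of $\varphi$, take $\lambda(t)$ equal to the identity on $\mathrm{Span}\{v_j\}_{j\in J}$ and to $t\cdot\mathrm{id}$ on a fixed complement; then $\lim_{t\to0}\lambda(t)\cdot v$ replaces each $v_i$ with $i\notin J$ by its component in $\mathrm{Span}\{v_j\}_{j\in J}$, and, using that the faces of a matrix in $\mathrm{Image}(\pi)$ are modular (Theorem~\ref{thm:split}), its rank matrix is exactly the $\psi$ with $\psi(I)=\varphi(I\cup J)+\varphi(I\cap J)-\varphi(J)$. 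Given instead the face decomposition $\rho(\varphi)=\{I_k^{r_k}\}$ (Definition~\ref{df:rho}) and a $\psi\le\varphi$ agreeing with $\varphi$ on every $I_k$, a one–parameter subgroup that pushes the spans of the faces into more special relative position while freezing each face internally degenerates $v$ to a configuration with rank matrix $\psi$. Finally, for the generating relation of $\prec$, a one–parameter subgroup contracting one coordinate direction inside the span of (a face of) $v$ lowers the face ranks $r_k$ to $s_k$ without altering their supports; and since the modulus of the limit can be prescribed freely by first applying an element of $G$ rotating the contracted directions, this degeneration sweeps a dense subset of $\pi^{-1}(\psi)$. Composing such maps and using $w'\in\overline{G\cdot w}\Rightarrow\overline{G\cdot w'}\subseteq\overline{G\cdot w}$ yields: $\psi\preceq\varphi\Rightarrow\pi^{-1}(\psi)\cap\overline{\mathcal O}\neq\emptyset$ for every orbit $\mathcal O\subseteq\pi^{-1}(\varphi)$; and $\pi^{-1}(\psi)\subseteq\overline{\mathcal O}$ whenever $\psi\prec\varphi$, or when $\psi\preceq\varphi$ with $\pi^{-1}(\psi)$ a single orbit. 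Specialising to $\mathcal O=\pi^{-1}(\varphi)$ in the single–orbit case, where $\{\psi:\psi\le\varphi\}=\{\psi:\psi\preceq\varphi\}$ and the three moves reach everything below $\varphi$, gives part~(1); for the exceptional $\varphi$ this supplies the ``if'' directions of (2a) and (2b), which are Theorems~\ref{thm:clsingle} and~\ref{thm:clparameter}.

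The third step, and the remaining content, is the ``only if'' of (2a) — that $\pi^{-1}(\psi)\cap\overline{\mathcal O}\neq\emptyset$ forces $\psi\preceq\varphi$ — and the sharpening in (2b), namely that $\pi^{-1}(\psi)$ need \emph{not} lie in $\overline{\mathcal O}$ when $\psi$ is exceptional and $\psi\not\prec\varphi$. The case base is finite: $\mathcal O\subsetneq\pi^{-1}(\varphi)$ forces $\varphi$ to be one of $(4),(5),(5,5),(5,8),(5,10)$, so for each of these one enumerates the $\psi\le\varphi$ in $\mathrm{Image}(\pi)$ and handles those with $\psi\not\preceq\varphi$, respectively the exceptional $\psi$ with $\psi\not\prec\varphi$. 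The tool is to track the modulus classifying the orbits of $\pi^{-1}(\varphi)$ along degenerations. A point $w$ of $\overline{\mathcal O}\setminus\mathcal O$ is the limit of a curve $t\mapsto g(t)\cdot v$ with $v\in\mathcal O$; writing $g(t)=k_1(t)\lambda(t)k_2(t)$ by the elementary–divisor (Cartan) decomposition over $\mathbb K((t))$, with $k_i(t)$ integral and $\lambda(t)$ diagonal, one finds $w\in G\cdot\overline{\lambda(\mathbb G_m)\cdot v'}$ for $v'=k_2(0)v\in\mathcal O$ (after perhaps adjusting $\lambda$ and $v'$), so every boundary point already arises from a diagonal one–parameter degeneration of a point of $\mathcal O$. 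Reading off the exponents of $\lambda$ then identifies the rank matrix of the limit as one produced by a move above, hence $\psi\preceq\varphi$, and, crucially, shows that when $\psi$ is exceptional the modulus of the limit is a \emph{fixed} function of the modulus of $\mathcal O$ unless the face supports of $\psi$ coincide with those of $\varphi$, i.e.\ unless $\psi\prec\varphi$; so only for $\psi\prec\varphi$ does the whole stratum $\pi^{-1}(\psi)$ enter $\overline{\mathcal O}$, otherwise only a one–parameter sub-family does.

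I expect this third step to be the main obstacle. The constructions from below are explicit linear algebra, whereas here one must prove \emph{non-existence} of certain degenerations and pin down exactly which orbits of an exceptional boundary stratum are reached; the Cartan reduction tames arbitrary degenerations into diagonal ones, but the modulus bookkeeping is genuinely case–by–case over the five exceptional $\varphi$ and over the orbit $\mathcal O$ inside each, and arranging it so that the relations $\leq$, $\preceq$, $\prec$ of Definition~\ref{df:partial} fall out cleanly — in particular so that $\prec$ emerges as exactly the condition for an entire exceptional stratum to lie in the closure — is where the real effort goes. This is carried out in Theorems~\ref{thm:clsingle} and~\ref{thm:clparameter}.
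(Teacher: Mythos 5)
Your first two steps match the paper's: the upper bound $\overline{\mathcal O}\subseteq\coprod_{\psi\le\varphi}\pi^{-1}(\psi)$ is lower semicontinuity of rank, which is exactly the computation opening Section 3.1, and your face-contraction one-parameter subgroup is Lemma~\ref{thm:preceq}; the remaining generating moves of Definition~\ref{df:partial} are realized in the paper by explicit families (the product structure of Lemma~\ref{thm:cl42}, the rational degenerations appearing inside the closure lemmas). So the "if" halves of (1), (2a), (2b) are obtained in essentially the same way you describe.

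The divergence, and a genuine gap, is in the "only if" direction of (2a) and the sharpening in (2b). You propose to control arbitrary boundary points via the Cartan decomposition $g(t)=k_1(t)\lambda(t)k_2(t)$ over $\mathbb K((t))$ and to conclude that every boundary point of $\overline{\mathcal O}$ is, up to $G$, a limit $\lim_{t\to0}\lambda(t)v'$ of a diagonal one-parameter subgroup applied to a single point $v'\in\mathcal O$. That conclusion is not formal: $\lim_{t\to0}\lambda(t)k_2(t)v$ and $\lim_{t\to0}\lambda(t)k_2(0)v$ differ in general, because the negative weights of $\lambda$ amplify the $O(t)$ deviation of $k_2(t)v$ from $k_2(0)v$, and your parenthetical "after perhaps adjusting $\lambda$ and $v'$" is precisely where a proof must go. No such off-the-shelf lemma is available — the strong form of the Hilbert--Mumford statement, that every boundary orbit is reached by a single one-parameter subgroup, fails for reductive group actions in general — so the reduction would itself have to be established case by case. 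The paper sidesteps this entirely: for each exceptional rank matrix it exhibits explicit $GL$-semi-invariant polynomials (equations (\ref{eq:52}), (\ref{eq:522}), (\ref{eq:53}), (\ref{eq:533})) whose common zero locus $Z(I_p)$ contains $\overline{\mathcal O}$ by construction, determines $Z(I_p)$ exactly by elementary linear algebra, and shows $Z(I_p)\subseteq\overline{\mathcal O}$ by explicit rational families, so that $\overline{\mathcal O}=Z(I_p)$ and the strata and moduli that occur are simply read off (Lemmas~\ref{thm:cl52}--\ref{thm:cl533}). That delivers both the "only if" of (2a) and the modulus bookkeeping of (2b) with no appeal to a one-parameter-subgroup reduction. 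If you pursue your route you must still establish the diagonal-degeneration reduction separately for each of the five exceptional $\varphi$ and each orbit $\mathcal O$ inside; given that a finite case analysis is unavoidable either way, the invariant-theoretic route taken in the paper is the more economical one.
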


Our results may be considered as a special case of the description of the double coset $H\backslash G/P$ where $P$ is a parabolic subgroup of a reductive group $G$, and $H$ is a reductive subgroup of $G$. 
An explicit description of the double coset has been extensively studied when $\#(H\backslash G/P)<\infty$, for instance, in the setting where $(G,H)$ is a symmetric pair by Matsuki \cite{mp,mpp} which generalises the Bruhat decomposition, and in the non-symmetric setting where $(G,H)=(G'\times \cdots\times  G',\mathrm{diag}(G'))$ in \cite{mwzA,mwzC,mBex,mB}, and where $(G,H)=(O(p+1,q)\times O(p,q),\mathrm{diag}O(p,q))$ by Kobayashi-Leontiev \cite{kl}.

Such a decomposition is in deep connection with representation theory.
For instance, the existence of open orbits, and more strongly, the finiteness of orbits play a crucial role in the estimate of multiplicities in both induction and restriction as was shown in Kobayashi-Oshima \cite{ko}, and also in \cite{k,tt}. See also the classification of Kobayashi-Matsuki \cite{km}. 
Moreover, the orbit decomposition in the case \cite{kl} is responsible for the ``family'' of symmetry breaking operators from $O(p+1,q)$-modules to $O(p,q)$-modules \cite{kl,ks,ks2}. So far, a description of the double coset was known only where $\#(H\backslash G/P)<\infty$, referred to as ``finite type''. This condition is equivalent to the existence of open $H$-orbit on $G/P$ if $P$ is a minimal parabolic subgroup of $G$ by Brion, Vinberg, and Matsuki; \cite{br1,br2,morb}, but it is not always the case if $P$ is a general parabolic subgroup. See also some classifications of ``finite type'' for $(G,H)=(G'\times \cdots\times  G',\mathrm{diag}(G'))$ in \cite{mwzA,mwzC,mB}.

\bigskip
Note that the pair $(G,H)=(G'^m, \mathrm{diag}(G'))$ is no more a symmetric pair if $m\geq 3$, and is of ``finite type'' only if $m\leq 3$. 
On the other hand, the existence of open $H$-orbits on $G/P$ gives useful information on the branching laws between representations of $H$ and representations of $G$ induced from characters of $P$ \cite{k,tt}. 


\bigskip
Our setting in this article corresponds to the case where $G'=GL_n$ and $G'/P'\simeq \mathbb P^{n-1}$. We have shown in \cite{s} that the existence of both an open orbit and infinitely many orbits is equivalent to $4\leq m\leq n+1$. An explicit description of ``generic'' orbits is given in \cite{s} for general $(n,m)$, whereas a complete description of $\mathrm{diag}(G')$-action and closure relations among orbits is given for the simplest case where $(n,m)=(3,4)$ in \cite{s1}. In this article, we develop these techniques for the second simplest case where $(n,m)=(4,5)$.

\begin{notation}
We set $\N=\{0,1,2,3,\ldots\}$, and $[m]$ denotes the set $\{1,2,\ldots,m \}\subset \mathbb N$ for $m\in \mathbb N$. 

We let $\mathbb K$ be an algebraically closed field with characteristic $0$. For a vector $v\in \mathbb K^n\setminus\{\bm 0\}$, we write the $\mathbb K^\times$-orbit through $v$ as $[v]\in\pr{n-1}$. Similarly for a matrix $(v_i)_{i=1}^m\in M(n,m)$ without any columns equal to $\bm 0$, the notion $[v_i]_{i=1}^m$ denotes the $m$-tuple of $\mathbb P^{n-1}$. For an $m$-tuple $[v_i]_{i=1}^m$ in $\mathbb P^{n-1}$, we write the subspace spanned by these $m$ vectors by $\langle v_i\rangle_{i=1}^m$. Furthermore, $\{e_i\}_{i=1}^n$ denotes the standard basis of $\mathbb K^n$. 
\end{notation}

\section{Classification of orbits}
\label{orbitdecomposition}


\subsection{Indecomposable splittings and rank matrices}

To simplify the description of ${GL_n}$-orbit decomposition of $(\mathbb P^{n-1})^m$, we introduce two ${GL_n}$-invariant maps from $(\mathbb P^{n-1})^m$ with finite images parametrised combinatorially. Then we obtain the decomposition of $(\mathbb P^{n-1})^m$ into a finite number of $GL_n$-invariant fibres via these maps. 

\bigskip
The first notion is the indecomposable splitting. Identify an element $v=[v_i]_{i=1}^m\in (\mathbb P^{n-1})^m$ as a representation of the star shaped quiver $T$ with $m$ edges of the length $1$ equipped the linear inclusions $\mathbb Kv_i\hookrightarrow\mathbb K^n$. Then we can consider its splitting into indecomposable ones. Remark that it is unique up to isomorphisms, because the category consisting of all representations with linear inclusions on any edges of $T$ is a full-subcategory of an Abelian category consisting of all representations of $T$. 

Let $v$ is decomposed as $v=w\oplus u$ where $w$ is a representation of $T$ with a linear space $W$ on its centre vertex, and linear inclusions into $W$ from subspaces $\{W_i\}_{i=1}^m$ on the edges (resp. $u$). Then we have $\mathbb K^n=W\oplus U$ and $\mathbb Kv_i=W_i\oplus U_i$. It leads that, there exists a partition $[m]=I_W\amalg I_U$ such that $\langle v_i\rangle_{i\in I_W}\subset W$ and $\langle v_i\rangle_{i\in I_U}\subset U$. Hence we can define as below:

\begin{df}\label{df:splitting} \begin{enumerate}
\item For $v=[v_i]_{i=1}^m\in (\mathbb P^{n-1})^m$, we say $v$ is essentially indecomposable if there do not exist any partition $I\amalg J=[m]$ such that $I,J\neq\emptyset$ and $\langle v_i\rangle_{i\in I}\cap \langle v_j\rangle_{j\in J}=\bm 0$. 
\item Define a set $\mathcal P_{m}$ as bellow:
\[\mathcal P_{m}:=\left\{\{I_k^{r_k}\}_{k=1}^l\left|\ \coprod_{k=1}^lI_k=[m],\ I_k\neq\emptyset,\ r_k\in\mathbb N\right.\right\}.\]
\item Define the map $\varpi\colon (\mathbb P^{n-1})^m\to \mathcal P_{m},\ v\mapsto \{I_k^{r_k}\}_{k=1}^l$ where 
\begin{enumerate}
\item $\langle v_i\rangle_{i=1}^m=\bigoplus_{k=1}^l \langle v_i\rangle_{i\in I_k}$ and  $r_k=\dim\langle v_i\rangle_{i\in I_k}$, 
\item $[v_i]_{i\in I_k}\in (\mathbb P^{n-1})^{\#I_k}$ is essentially indecomposable. 
\end{enumerate}
\item For $\varpi(v)=\{I_k^{r_k}\}_{k=1}^l$, we call the tuple $\{(\#I_k)^{r_k}\}_{k=1}^l$ the splitting type of $v$. 
\end{enumerate}
\end{df}
\begin{exmp} The notion $\varpi(v)=\{I_k^{r_k}\}_{k=1}^l$ is equivalent to that $v$ splits into indecomposable representations of $T$ with the $r_k$-dimensional space on the centre vertex, and $1$-dimensional spaces on the $i$-th vertices for $i\in I_k$, and $0$-spaces on others. For instance, if $(n,m)=(3,4)$, we have 
\begin{align*}
\varpi([e_1,e_1,e_2,e_3])&=\left\{\{1,2\}^1, \{3\}^1,\{4\}^1\right\},&& \textrm{splitting type }\{2^1,1^1,1^1\}, \\
\varpi([e_1,e_2,e_3,e_2+e_3])&=\left\{\{1\}^1,\{2,3,4\}^2\right\},&&\textrm{splitting type }\{1^1,3^2\}. 
\end{align*}
\end{exmp}
Remark that the map $\varpi$ is $GL_n$-invariant since $GL_n$-actions on $(\mathbb P^{n-1})^m$ coincide with the isomorphisms between representations of the quiver $T$. 
With this notion of indecomposable splittings, we can generically describe $GL_n$-orbits on $(\mathbb P^{n-1})^m$ as follows: 
\begin{fact}[\cite{kac,s}] \label{thm:splittingdecomp} For the map $\varpi\colon X_{n,m}=(\mathbb P^{n-1})^m\to \mathcal P_m$ defined in Definition \ref{df:splitting}, 
\begin{enumerate}
\item the image of the map $\varpi$ is the set 
\[\left\{\{I_k^{r_k}\}_{k=1}^l \in \mathcal P_m\left|\ r_k=1\textrm{ or } 2\leq r_k\leq \#I_k-1 \textrm{ for each }1\leq k\leq l,\textrm{ and }\sum_{k=1}^lr_k\leq n\right.\right\}, \]
\item for an element $\{I_k^{r_k}\}_{k=1}^l\in\mathrm{Image}(\varpi)$, there exists the following open dense embedding: 
\[\prod_{k=1}^l (\mathbb P^{r_k-1})^{\#I_k-1-r_k}  \hookrightarrow  GL_n\setminus \varpi^{-1}\left(\{I_k^{r_k}\}_{k=1}^l\right),\ \ \ 
p=\left(p_k\right)_{k=1}^l\mapsto GL\cdot v_p\]
where $v_p=[v_i]_{i=1}^m$ is defined as follows under the notation $R(k)=\sum_{k'<k}r_{k'}$: 
\[[v_{i}]_{i\in I_k}=\left[e_{R(k)+1},e_{R(k)+2},\ldots,e_{R(k)+r_k},\sum_{j=1}^{r_k}e_{R(k)+j},p_k\right],\] 
\item for an element $\{I_k^{r_k}\}_{k=1}^l$ of the subset 
\[\mathcal P_{n,m}':=\left\{\{I_k^{r_k}\}_{k=1}^l \in \mathrm{Image}(\varpi)\left|\ r_k=1\textrm{ or } 2\leq r_k= \#I_k-1 \textrm{ for each }1\leq k\leq l\right.\right\} \]
of $\mathrm{Image}(\varpi)$, the open dense embeddings in (2) is bijective. 
In particular, $\varpi^{-1}\left(\{I_k^{r_k}\}_{k=1}^l\right)\subset (\mathbb P^{n-1})^m$ is a single $GL_n$-orbit,
\end{enumerate}
\end{fact}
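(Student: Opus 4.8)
The plan is to interpret $\varpi$ as the map recording the decomposition of $v$ into indecomposable representations of the star quiver $T$, and then to combine Kac's theorem on indecomposable representations with an explicit normal form (the latter being the substance of \cite{s}). First I would identify $v=[v_i]_{i=1}^m$ with the $T$-representation carrying $\mathbb{K}^n$ on the centre vertex and the lines $\mathbb{K}v_i\hookrightarrow\mathbb{K}^n$ on the arms. The $\varpi$-invisible summand $\mathbb{K}^n/\langle v_i\rangle_{i=1}^m$ (which carries no arm data) may be discarded, so one works with the representation whose centre space is exactly $\langle v_i\rangle_{i=1}^m$. A key observation is that any sincere indecomposable of $T$ with one-dimensional arm spaces automatically has injective structure maps — otherwise the simple module at some arm would split off — so passing to the subcategory of representations with injective arm maps does not affect the enumeration of indecomposable summands: the summand supported on a block $I_k$ is an indecomposable representation of the sub-star with leaf set $I_k$ and dimension vector $(r_k;1^{\#I_k})$, and $\sum_k r_k=\dim\langle v_i\rangle_{i=1}^m\le n$. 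In particular $v$ is essentially indecomposable in the sense of Definition~\ref{df:splitting} if and only if its dimension vector is a positive root of $T$.

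For part (1) I would invoke Kac's theorem: an indecomposable of dimension vector $d$ exists precisely when $d$ is a positive root, i.e.\ (all our $d$ being supported on a connected sub-star) when $d\ge 0$ and the Tits form satisfies $q(d)\le 1$. A direct count of vertices and edges gives $q(r;1^s)=r^2+s-rs$ with $s=\#I_k$, and for $r\ge 1$ the inequality $q(r;1^s)\le 1$ rearranges to $(r-1)(r+1)\le s(r-1)$, which holds exactly when $r=1$ or $2\le r\le s-1$. Combined with the constraint $\sum_k r_k\le n$ and the elementary fact that each such dimension vector is realised — by $[e_1,\dots,e_1]$ when $r=1$, and by a projective frame padded with further points when $2\le r\le s-1$ — this yields the description of $\mathrm{Image}(\varpi)$.

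For part (2), fix one block with root $(r;1^s)$, $2\le r\le s-1$, so $s\ge r+1$. Since the entries of $(r;1^s)$ have greatest common divisor $1$, this is a Schur root; hence the indecomposable locus of its representation variety is an irreducible open dense subvariety (of the expected moduli dimension $1-q(r;1^s)=(r-1)(s-r-1)$), and $\varpi^{-1}(\{I_k^{r_k}\}_k)$ is itself irreducible. On the dense open locus where the first $r_k+1$ points of each block are in general linear position, $GL_n$ carries that frame to $(e_{R(k)+1},\dots,e_{R(k)+r_k},\sum_j e_{R(k)+j})$ with residual stabiliser the scalars on the block, leaving the remaining $\#I_k-1-r_k$ points free in $\mathbb{P}^{r_k-1}$; assembling the blocks via the offsets $R(k)$ produces the map $p\mapsto GL_n\cdot v_p$, whose image is therefore open and dense. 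For injectivity: if $g\cdot v_p=v_{p'}$ then $g$ preserves the canonical \emph{labelled} splitting — the blocks $I_k$ are disjoint subsets of $[m]$, and a $GL_n$-isomorphism fixes the arm labels, hence matches summands with identical support — so $g$ acts block-wise; on each block it sends the coordinate points and their sum to scalar multiples of themselves, which forces $g$ to be a scalar there, whence $v_p=v_{p'}$ and $p=p'$. This gives the open dense embedding.

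For part (3), suppose in addition every block with $r_k\ge 2$ satisfies $r_k=\#I_k-1$. Then each dimension vector $(r_k;1^{r_k+1})$, and each $(1;1^{\#I_k})$, is a \emph{real} root ($q=1$), so by Kac it admits a unique indecomposable — concretely $[e_1,\dots,e_{r_k},e_1+\cdots+e_{r_k}]$, respectively $[e_1,\dots,e_1]$. Hence every element of $\varpi^{-1}(\{I_k^{r_k}\}_k)$ is $T$-isomorphic, and so $GL_n$-equivalent, to $v_p$: the fibre is a single $GL_n$-orbit. Since $\prod_k(\mathbb{P}^{r_k-1})^{\#I_k-1-r_k}$ now collapses to a point, the embedding of part (2) is a bijection. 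I expect the main obstacle to be part (2), namely upgrading ``a family of orbits'' to an \emph{open dense} embedding: this is where \cite{s} does the real work, requiring that $(r;1^s)$ be a Schur root and that the essentially indecomposable locus of the relevant representation variety be irreducible, so that its image in the quotient is irreducible and the normal-form open set is dense in it; by comparison, the block bookkeeping and the check that no $g\in GL_n$ mixes the labelled blocks are routine.
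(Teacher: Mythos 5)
The paper does not prove this statement; it is labelled \emph{Fact} and attributed to \cite{kac,s}, so there is no internal argument to compare against. Your reconstruction nonetheless follows the approach those references take: reinterpret the data as a representation of the star quiver $T$, discard the zero-arm complement, and apply Kac's theorem for part (1), then use the explicit normal form for parts (2)--(3). Within that framework the Tits-form computation $q(r;1^s)=r^2+s-rs$ and the rearrangement to $r=1$ or $2\le r\le s-1$ are correct, and the injectivity argument in (2) (that a $GL_n$-isomorphism must respect the labelled block decomposition and therefore act by scalars on each block) is sound, as is the uniqueness-for-real-roots argument in (3).

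There is one genuine gap in part (2). You assert that $(r;1^s)$ is a Schur root ``since the entries have greatest common divisor $1$.'' That inference is false in general: indivisibility of the dimension vector is necessary but far from sufficient for a root to be a Schur root, and for wild quivers there are plenty of indivisible imaginary roots that are not Schur. In the present situation the roots $(r;1^s)$ with $1\le r\le s-1$ \emph{are} Schur roots, but the reason is specific to the configuration: any generic decomposition $(r;1^s)=(r_1;\epsilon^{(1)})+(r_2;\epsilon^{(2)})$ must partition the arms, and the Euler form $\langle (r_1;\epsilon^{(1)}),(r_2;\epsilon^{(2)})\rangle=r_1r_2-r_1s_2<0$ (using $r_2\le s_2-1$), so $\operatorname{Ext}$ between the putative generic summands cannot vanish and no such decomposition occurs generically. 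Alternatively — and this is what \cite{s} actually does — one can bypass Schur roots entirely and argue directly that the locus where the first $r_k+1$ points of each block lie in general position is an open subset of $\varpi^{-1}(\{I_k^{r_k}\})$ on which $GL_n$ acts with the stated parameter space, and that this locus is dense because the fibre is irreducible (it fibres over the product of Grassmannians of the block spans with irreducible fibres). You should replace the gcd assertion by one of these arguments; with that change the proposal is sound.
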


According to Fact \ref{thm:splittingdecomp}, we can describe all orbits in the fibres of $\varpi$ on $\mathcal P_{n,m}'\subset \mathrm{Image}(\varpi)$. However, we can determine other orbits only generically. To solve this problem, we introduce another notion of $GL_n$-invariance. 
The second notion is rank matrices. We define as below: 
\begin{df}\label{df:rankmatrix} For $(\mathbb P^{n-1})^m$, we define a $GL_n$-invariant map as follows: 
\begin{equation*} 
\pi\colon  (\mathbb P^{n-1})^m  \to Map(2^{[m]},\mathbb N), \ [v_i]_{i=1}^m \mapsto  \left(I\mapsto \dim\langle v_i\rangle_{i\in I}\right). 
\end{equation*}
Furthermore, for $\varphi\in Map(2^{[m]},\mathbb N)$ and an integer $r$, we call a maximal element of $\varphi^{-1}(r)\subset 2^{[m]}$ under the inclusion relation as an $r$-face of $\varphi$, and the tuple $(\#\{\textrm{$r$-faces of $\varphi$}\})_{r=1}^\infty$ as the rank type of $\varphi$. We also define $r$-faces and the rank type of $v\in(\mathbb P^{n-1})^m$ or the orbit through it as those of $\pi(v)\in Map(2^{[m]},\mathbb N)$. 
\end{df}
\begin{exmp} For $v=[e_1,e_2,e_3,e_2+e_3]$ and $\pi(v)=\varphi$, we have
\begin{align*}
\varphi^{-1}(1)&=\left\{\{1\},\{2\},\{3\},\{4\}\right\} \\ 
\varphi^{-1}(2)&=\left\{\{1,2\},\{1,3\},\{1,4\}.\{2,3\},\{2,4\},\{3,4\},\{2,3,4\}\right\} \\
\varphi^{-1}(3)&=\{\{1,2,3\},\{1,2,4\},\{1,3,4\},\{1,2,3,4\}\}, 
\end{align*}
and the rank type of $v$ is $(4,4,1,0,0,\ldots)$. We may simply write $(4,4)$ when the last $(1,0,0,\ldots)$ is clear. 
\end{exmp}
We observe the relationship between $\varpi$ and $\pi$ as follows:
\begin{df}\label{df:rho}\begin{enumerate}
\item We say $\varphi\colon 2^{[m]}\to\mathbb N$ is decomposable if there exists a partition $I_1\amalg I_2=[m]$ such that $I_1,I_2\neq\emptyset$ and $\varphi(I)=\varphi(I\cap I_1)+\varphi(I\cap I_2)$ for all $I\subset[m]$. 
\item Define a map $\rho\colon Map(2^{[m]},\mathbb N)\to \mathcal P_m,\ \varphi\mapsto \{(I_k,r_k)\}_{k=1}^m$ where
\begin{enumerate}
\item $\varphi(I)=\sum_{k=1}^l \varphi(I\cap I_k)$ for $I\subset [m]$, and $r_k=\varphi(I_k)$, 
\item the restricted map $\left.\varphi\right|_{I_k}$ is indecomposable for each $1\leq k\leq l$. 
\end{enumerate}
\end{enumerate}
\end{df}
\begin{lemma} \label{thm:rho} The map $\rho$ in Definition \ref{df:rho} is well-defined, and $\varpi=\rho\circ \pi$ holds for the maps $\varpi$ and $\pi$ defined in Definitions \ref{df:splitting} and \ref{df:rankmatrix}. 
\end{lemma}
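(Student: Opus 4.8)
The claim has two parts: well-definedness of $\rho$, and the identity $\varpi = \rho\circ\pi$. For well-definedness I would first establish a uniqueness statement for the decomposition in Definition~\ref{df:rho}(2): if $\varphi(I)=\sum_k\varphi(I\cap I_k)$ with each $\varphi|_{I_k}$ indecomposable, then the partition $\{I_k\}$ is unique. The key lemma to prove is that the relation on $[m]$ defined by ``$i\sim j$ if $i,j$ lie in a common block of \emph{some} additive decomposition of $\varphi$'' behaves well — concretely, that if $[m]=I_1\amalg I_2$ and $[m]=J_1\amalg J_2$ are both additive decompositions (meaning $\varphi(I)=\varphi(I\cap I_1)+\varphi(I\cap I_2)$ and likewise for $J$), then the common refinement $\{I_a\cap J_b\}_{a,b}$ is again an additive decomposition. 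Granting this refinement property, one iterates: starting from $[m]$, keep splitting off decompositions until every block is indecomposable; the refinement property guarantees the terminal partition is independent of the choices, hence $\rho$ is well-defined. I would prove the refinement property by a direct rank computation — submodularity of $\varphi$ (which follows from $\varphi=\pi(v)$ on the image, but must be argued abstractly if $\rho$ is to be defined on all of $Map(2^{[m]},\mathbb N)$; in practice the author likely only needs $\rho$ on $\mathrm{Image}(\pi)$, so I would restrict attention there and use that $\varphi(I)=\dim\langle v_i\rangle_{i\in I}$ genuinely comes from a configuration).

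For the identity $\varpi=\rho\circ\pi$, I would argue as follows. Fix $v=[v_i]_{i=1}^m$ and let $\varpi(v)=\{I_k^{r_k}\}_{k=1}^l$, so $\langle v_i\rangle_{i=1}^m=\bigoplus_k\langle v_i\rangle_{i\in I_k}$ with each $[v_i]_{i\in I_k}$ essentially indecomposable. First I check that this same partition witnesses $\rho(\pi(v))$: the direct sum decomposition of the ambient span immediately gives $\dim\langle v_i\rangle_{i\in I}=\sum_k\dim\langle v_i\rangle_{i\in I\cap I_k}$ for every $I\subseteq[m]$, i.e. $\pi(v)$ is additively decomposed along $\{I_k\}$, and clearly $r_k=\dim\langle v_i\rangle_{i\in I_k}=\pi(v)(I_k)$. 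So it remains to show each restricted rank function $\pi(v)|_{I_k}$ is \emph{indecomposable} in the sense of Definition~\ref{df:rho}(1). This is where the two notions of ``decomposable'' — the quiver-theoretic one in Definition~\ref{df:splitting}(1) and the rank-function one in Definition~\ref{df:rho}(1) — must be matched up. The nontrivial direction is: if $\pi(v)|_{I_k}$ is decomposable as a rank function, i.e. there is a partition $I_k=A\amalg B$ with $\dim\langle v_i\rangle_{i\in I\cap I_k}=\dim\langle v_i\rangle_{i\in I\cap A}+\dim\langle v_i\rangle_{i\in I\cap B}$ for all $I$, then $[v_i]_{i\in I_k}$ is \emph{not} essentially indecomposable. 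Taking $I=I_k$ in the additivity gives $\dim\langle v_i\rangle_{i\in A}+\dim\langle v_i\rangle_{i\in B}=\dim\langle v_i\rangle_{i\in I_k}$, which by the dimension formula forces $\langle v_i\rangle_{i\in A}\cap\langle v_i\rangle_{i\in B}=\bm 0$ — exactly the failure of essential indecomposability. Conversely, if $[v_i]_{i\in I_k}$ is not essentially indecomposable, a partition $I_k=A\amalg B$ with $\langle v_i\rangle_{i\in A}\cap\langle v_i\rangle_{i\in B}=\bm 0$ yields an internal direct sum $\langle v_i\rangle_{i\in I_k}=\langle v_i\rangle_{i\in A}\oplus\langle v_i\rangle_{i\in B}$ with each $v_i$ ($i\in A$) lying in the first summand and each $v_i$ ($i\in B$) in the second, whence $\dim\langle v_i\rangle_{i\in I\cap I_k}=\dim\langle v_i\rangle_{i\in I\cap A}+\dim\langle v_i\rangle_{i\in I\cap B}$ for all $I$, so $\pi(v)|_{I_k}$ is decomposable. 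Thus ``$[v_i]_{i\in I_k}$ essentially indecomposable'' $\iff$ ``$\pi(v)|_{I_k}$ indecomposable'', and the partition $\{I_k\}$ of $\varpi(v)$ satisfies both conditions (a),(b) of Definition~\ref{df:rho}(2) for $\pi(v)$; by the uniqueness in the well-definedness part, $\rho(\pi(v))=\{I_k^{r_k}\}_{k=1}^l=\varpi(v)$.

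I expect the main obstacle to be the uniqueness/well-definedness step, specifically the refinement property for additive decompositions of rank functions, since everything else is a straightforward unwinding of the dimension formula $\dim(V+W)=\dim V+\dim W-\dim(V\cap W)$ and the definition of direct sums of quiver representations. The refinement property should follow from an inclusion–exclusion manipulation combined with the observation that for a configuration coming from actual subspaces, having $\langle v_i\rangle_{i\in I_1}\cap\langle v_i\rangle_{i\in I_2}=\bm 0$ and $\langle v_i\rangle_{i\in J_1}\cap\langle v_i\rangle_{i\in J_2}=\bm 0$ forces the four intersections $\langle v_i\rangle_{i\in I_a\cap J_b}$ to be in direct sum position inside the total span; one checks this by a rank count, using that a sum of subspaces $U_1+\cdots+U_4$ is direct iff $\dim(U_1+\cdots+U_4)=\sum\dim U_i$, and that the hypotheses give $\dim\langle v_i\rangle_{i=1}^m=\sum_{a,b}\dim\langle v_i\rangle_{i\in I_a\cap J_b}$ after two applications of additivity. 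Once the partition lattice structure is in place, the terminal (finest indecomposable) partition exists because $[m]$ is finite, and is unique because any two indecomposable refinements have a common refinement which must equal both.
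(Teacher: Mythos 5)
Your proposal is correct and follows essentially the same route as the paper. For $\varpi=\rho\circ\pi$, your argument matches the paper's: the direct sum $\langle v_i\rangle_{i=1}^m=\bigoplus_k\langle v_i\rangle_{i\in I_k}$ gives additivity of $\pi(v)$ along $\{I_k\}$, and essential indecomposability of $[v_i]_{i\in I_k}$ forces $\langle v_i\rangle_{i\in I_k\cap J}\cap\langle v_i\rangle_{i\in I_k\cap J'}\neq\bm 0$ for any nontrivial $J\amalg J'=I_k$, hence $\pi(v)(I_k)<\pi(v)(I_k\cap J)+\pi(v)(I_k\cap J')$, so $\pi(v)|_{I_k}$ is indecomposable. (The converse you also prove is not needed once well-definedness is established.) For well-definedness the paper argues in one step rather than by iterated refinement: given two partitions $\{I_k\}$, $\{J_{k'}\}$ both satisfying conditions (i), (ii), if $I_k\cap J_{k'}$ and $I_k\setminus J_{k'}$ are both nonempty, applying $J$-additivity to each $I\subset I_k$ gives $\varphi(I)=\varphi(I\cap J_{k'})+\varphi(I\setminus J_{k'})$, a nontrivial decomposition of $\varphi|_{I_k}$ contradicting (ii); by symmetry $I_k=J_{k'}$.

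One misconception worth flagging: you suggest submodularity — or restriction to $\mathrm{Image}(\pi)$ — might be needed for the refinement step, but it is not. Both the paper's one-step argument and your refinement lemma use only the formal additivity conditions. Indeed the common refinement of two additive partitions is automatically additive via $\varphi(I)=\sum_k\varphi(I\cap I_k)=\sum_k\sum_{k'}\varphi(I\cap I_k\cap J_{k'})$, applying $J$-additivity to each $I\cap I_k$. So $\rho$ is well-defined on all of $Map(2^{[m]},\mathbb N)$, not just on $\mathrm{Image}(\pi)$, and no rank-function structure on $\varphi$ is invoked.
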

\begin{proof}
First, we show well-definedness of $\rho$. Let $\{I_k^{r_k}\}_{k=1}^l$ and $\{J_k^{s_k}\}_{k=1}^{l'}\in\mathcal P_m$ satisfy the two conditions (i) and (ii) in Definition \ref{df:rho} (2) for $\varphi\colon 2^{[m]}\to\mathbb N$. If $I_k\cap J_{k'}\neq\emptyset$ and $I_k\setminus J_{k'}\neq\emptyset$, then for $I\subset I_k$, we have 
\begin{align*}
\varphi(I)&=\sum_{k''=1}^{l'}\varphi(J_{k''}\cap I)=\varphi(J_{k'}\cap I)+\sum_{k''=1}^{l'}\varphi(J_{k''}\cap I\setminus J_{k'})=\varphi(J_{k'}\cap I)+\varphi(I\setminus J_{k'}) \\
&= \varphi(I_k\cap J_{k'}\cap I)+\varphi(I_k\setminus J_{k'}\cap I)\end{align*}
from (i), and it contradicts to the indecomposability of $\left.\varphi\right|_{I_k}$ in (ii). Hence either $I_k\cap J_{k'}=\emptyset$ or $I_k=J_{k'}$ has to hold. 
Now if $\varpi(v)=\varpi([v_i]_{i=1}^m)=\{I_k^{r_k}\}_{k=1}^l$, we can observe $\pi (v)$ as below.  
\begin{enumerate}
\item From $\langle v_i\rangle_{i=1}^m=\bigoplus_{k=1}^l \langle v_i\rangle_{i\in I_k}$, the following holds for all $I\subset [m]$:
\[\pi(v)(I)=\dim\langle v_i\rangle_{i\in I}=\sum_{k=1}^l \langle v_i\rangle_{i\in I_k\cap I}=\sum_{k=1}^l\pi(v)(I_k\cap I).\] 
\item If $\emptyset\neq J,J'$ and $J\amalg J'=I_k$, then $\langle v_i\rangle_{i\in I_k\cap J}\cap \langle v_i\rangle_{i\in I_k\cap J'}\neq \bm 0$ from the essential indecomposability of $[v_i]_{i\in I_k}$. Hence $\pi(v)|_{I_k}$ is indecomposable from the following inequality: 
\[\pi(v)(I_k)=\dim\langle v_i\rangle_{i\in I_k}< \dim\langle v_i\rangle_{i\in I_k\cap J}+\dim \langle v_i\rangle_{i\in I_k\cap J'}=\pi(v)(I_k\cap J)+\pi(v)(I_k\cap J').\]
\end{enumerate}
Hence we have shown that $\rho\circ\pi=\varpi$. 
\end{proof}
Combining the results in Fact \ref{thm:splittingdecomp} and Lemma \ref{thm:rho}, we obtain the following result. 
\begin{prop} \label{thm:rhobij} 
For the maps $\varpi\colon (\mathbb P^{n-1})^m\overset{\pi}{\to } Map(2^{[m]},\mathbb N)\overset{\rho}{\to}\mathcal P_m$ and the subset $\mathcal P_{n,m}'$ of $\mathrm{Image}(\varpi)$ defined in Definitions \ref{df:splitting}, \ref{df:rankmatrix}, \ref{df:rho}, and Fact \ref{thm:splittingdecomp}, there exist bijections commuting the following diagram. 
\[\xymatrix{
(\mathbb P^{n-1})^m \ar@{.>>}[d] \ar@{>>}[rd]_-{\pi} \ar@{>>}[rrd]^-{\varpi} \\
GL_n\backslash (\mathbb P^{n-1})^m\ar@{.>>}[r]_-{\tilde\pi}& \mathrm{Image}(\pi)  \ar@{.>>}[r]_-{\left.\rho\right|} & \mathrm{Image}(\varpi) \\
GL_n\backslash\varpi^{-1}(\mathcal P'_{n,m}) \ar[r]^-{\simeq}\ar@<-0.3ex>@{^{(}->}[u]  & \left.\rho\right|^{-1}(\mathcal P'_{n,m}) \ar[r]^-{\simeq}\ar@<-0.3ex>@{^{(}->}[u] & \mathcal P'_{n,m} \ar@<-0.3ex>@{^{(}->}[u]
}\]
Furthermore, for an element $\{I_k^{r_k}\}_{k=1}^l\in\mathcal P_{n,m}'$, the corresponding map in $Map(2^{[m]},\mathbb N)$ is given by the following: 
\[\left.\rho\right|^{-1}\left(\{I_k^{r_k}\}_{k=1}^l\right)\colon I\mapsto \sum_{k=1}^l \min\{\#(I\cap I_k),r_k\}. \]
\end{prop}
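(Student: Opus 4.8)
The plan is to deduce everything from Fact~\ref{thm:splittingdecomp} together with Lemma~\ref{thm:rho}, so that the only step carrying genuine content is the evaluation of $\pi$ on the explicit representatives $v_p$ of Fact~\ref{thm:splittingdecomp}(2). First I would record that the upper part of the diagram commutes: since $\pi$ and $\varpi$ are $GL_n$-invariant they induce surjections $\tilde\pi$ onto $\mathrm{Image}(\pi)$ and $\tilde\varpi$ onto $\mathrm{Image}(\varpi)$, and the identity $\varpi=\rho\circ\pi$ of Lemma~\ref{thm:rho} gives $\tilde\varpi=(\rho|)\circ\tilde\pi$; in particular $\rho|\colon\mathrm{Image}(\pi)\to\mathrm{Image}(\varpi)$ is surjective. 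The three vertical arrows are the evident inclusions and the two lower horizontal arrows are the restrictions of $\tilde\pi$ and of $\rho|$, so it remains to check that these restrictions are well defined between the indicated subsets and that they are bijective.

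For well-definedness, note that if $\varpi(v)\in\mathcal{P}'_{n,m}$ then $\rho(\pi(v))=\varpi(v)\in\mathcal{P}'_{n,m}$, so $\pi(v)\in\rho|^{-1}(\mathcal{P}'_{n,m})$; conversely, if $\varphi=\pi(v)$ with $\rho(\varphi)\in\mathcal{P}'_{n,m}$ then $\varpi(v)=\rho(\varphi)\in\mathcal{P}'_{n,m}$. Hence $\tilde\pi$ restricts to a surjection $GL_n\backslash\varpi^{-1}(\mathcal{P}'_{n,m})\to\rho|^{-1}(\mathcal{P}'_{n,m})$. Its injectivity is exactly where Fact~\ref{thm:splittingdecomp}(3) enters: if $\pi(v_1)=\pi(v_2)$ then $\varpi(v_1)=\rho(\pi(v_1))=\rho(\pi(v_2))=\varpi(v_2)=:\tau\in\mathcal{P}'_{n,m}$, and $\varpi^{-1}(\tau)$ is a single $GL_n$-orbit, so $v_1$ and $v_2$ lie in the same orbit. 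This settles the lower-left bijection, and it also reduces the lower-right bijection to one statement, since surjectivity of $\rho|$ onto $\mathcal{P}'_{n,m}$ is immediate from $\varpi=\rho\circ\pi$ and the surjectivity of $\varpi$ onto $\mathrm{Image}(\varpi)\supseteq\mathcal{P}'_{n,m}$.

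That statement is: a $\varphi\in\mathrm{Image}(\pi)$ with $\rho(\varphi)=\tau=\{I_k^{r_k}\}_{k=1}^l\in\mathcal{P}'_{n,m}$ is uniquely determined by $\tau$. Writing $\varphi=\pi(v)$, we have $\varpi(v)=\tau$, and for $\tau\in\mathcal{P}'_{n,m}$ the parameter space $\prod_k(\mathbb{P}^{r_k-1})^{\#I_k-1-r_k}$ of Fact~\ref{thm:splittingdecomp}(2) is a single point, so $\varpi^{-1}(\tau)=GL_n\cdot v_p$; by $GL_n$-invariance $\varphi=\pi(v_p)$, and it suffices to compute $\pi(v_p)$. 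Since $\langle v_i\rangle_{i=1}^m=\bigoplus_k\langle v_i\rangle_{i\in I_k}$ with $\langle v_i\rangle_{i\in I_k}$ the span of the coordinate block $\{e_{R(k)+1},\dots,e_{R(k)+r_k}\}$, we get $\pi(v_p)(I)=\sum_k\dim\langle v_i\rangle_{i\in I\cap I_k}$ for every $I\subseteq[m]$. Within a block with $r_k=1$ all of $[v_i]_{i\in I_k}$ equal $[e_{R(k)+1}]$, so that dimension is $\min\{\#(I\cap I_k),1\}$; within a block with $r_k=\#I_k-1\ge 2$ the $r_k+1$ vectors $e_{R(k)+1},\dots,e_{R(k)+r_k},\,e_{R(k)+1}+\dots+e_{R(k)+r_k}$ form a uniform matroid (any $r_k$ of them span the block), so that dimension is $\min\{\#(I\cap I_k),r_k\}$. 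Summing over $k$ yields $\pi(v_p)(I)=\sum_k\min\{\#(I\cap I_k),r_k\}$, which is determined by $\tau$; this proves injectivity of $\rho|$, hence the lower-right bijection, and at the same time gives the displayed formula for $\rho|^{-1}$ on $\mathcal{P}'_{n,m}$.

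There is no serious obstacle: once Fact~\ref{thm:splittingdecomp} is in hand the argument is bookkeeping, the one delicate point being to keep straight that ``$\varpi^{-1}(\tau)$ is a single orbit'' is precisely what makes $\pi$ injective on those fibres. The only step with actual (but routine) content is the uniform-matroid computation of $\pi(v_p)$, which is where the $\min$ in the formula originates; I would take care to treat the degenerate cases ($r_k=1$, or small $\#I_k$) uniformly so that $\dim\langle v_i\rangle_{i\in I\cap I_k}=\min\{\#(I\cap I_k),r_k\}$ holds in all of them.
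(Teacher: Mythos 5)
Your proposal is correct and follows essentially the same route as the paper: both rely on Fact~\ref{thm:splittingdecomp}(3) (that $\varpi^{-1}(\tau)$ is a single $GL_n$-orbit for $\tau\in\mathcal P'_{n,m}$) together with Lemma~\ref{thm:rho} to deduce bijectivity of the two lower horizontal arrows, and both obtain the displayed formula by evaluating $\pi$ on the explicit representative $v_p$, using that the $\#I_k=r_k+1$ vectors in each indecomposable block are in general position. The only stylistic difference is that the paper compresses the bijectivity argument into the observation that $\tilde\varpi=\rho|\circ\tilde\pi$ is bijective on the relevant subset with both factors surjective, whereas you spell out injectivity of each factor separately; these are equivalent.
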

\begin{proof}
Since $\tilde \varpi=\rho\circ\tilde\pi$ is bijective on the preimage of $\mathcal P_{n,m}'$ from Fact \ref{thm:splittingdecomp}, the restricted maps of $\rho$ and $\tilde \pi$ are also. 
Furthermore, for an element $\{I_k^{r_k}\}_{k=1}^l\in\mathcal P_{n,m}'$, consider a representative $v=[v_i]_{i=1}^m$ defined in Fact \ref{thm:splittingdecomp} (2) of the orbit $\varpi^{-1}\left(\{I_k^{r_k}\}_{k=1}^l\right)$. Then $\{v_i\}_{i\in I_k}$ spans an $r_k$-dimensional subspace and all $r_k$-tuples are linearly independent. Hence we have
\[\pi(v)\colon I\mapsto \dim\langle v_i\rangle_{i\in I}=\dim\bigoplus_{k=1}^l\langle v_i\rangle_{i\in I\cap I_k}=\sum_{k=1}^l\dim\langle v_i\rangle_{i\in I\cap I_k}=\sum_{k=1}^l \min\{\#(I\cap I_k),r_k\}. \]
\end{proof}

\subsection{Classification of configurations via indecomposable splittings}

In this subsection, we focus on the subset $\mathcal P_{4,5}'\subset \mathrm{Image}(\varpi)$ defined in Fact \ref{thm:splittingdecomp}. By definition of $\mathcal P_5$, an element $\{I_k^{r_k}\}_{k=1}^l$ consists of a partition $\{I_k\}_{k=1}^l$ of $[5]$ without any empty sets, and integers $r_k$. Furthermore, it is in the image of $\varpi$ if and only if the the splitting type $\{(\#I_k)^{r_k}\}_{k=1}^l$ satisfies $\sum_{k=1}^lr_k\leq 4$ and $r_k=1$ or $2\leq r_k\leq \#I_k-1$ for each $1\leq k\leq l$. For the condition to be an element of $\mathcal P_{4,5}'$, we shall just alternate the inequality $2\leq r_k\leq \#I_k-1$ with $2\leq r_k= \#I_k-1$. From these observations, we obtain the following classification of splitting types: 

\begin{lemma}\label{thm:splittingtype} For the map $\varpi\colon (\mathbb P^3)^5\to\mathcal P_{5}$ defined in Definition \ref{df:splitting}, 
\begin{enumerate}
\item an element $\{I_k^{r_k}\}_{k=1}^l\in\mathcal P_{5}$ is in the image of $\varpi$ if and only if the the splitting type $\{(\#I_k)^{r_k}\}_{k=1}^l$ is either $\{5^r\}$ ($1\leq r\leq 4$), $\{1^1,4^r\}$ ($1\leq r\leq 3$), $\{2^1,3^r\}$ ($1\leq r\leq 2$), $\{1^1,1^1,3^r\}$ ($1\leq r\leq 2$), $\{1^1,2^1,2^1\}$, or $\{1^1,1^1,1^1,2^1\}$, 
\item an element $\{I_k^{r_k}\}_{k=1}^l\in\mathrm{Image}(\varpi)\subset \mathcal P_{5}$ is in $\mathcal P_{3,5}'$ if and only if the the splitting type $\{(\#I_k)^{r_k}\}_{k=1}^l$ is neither $\{5^3\}$, $\{5^2\}$ nor $\{1^1,4^2\}$. 
\end{enumerate}
\end{lemma}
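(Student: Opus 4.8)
The plan is to reduce the statement to the two facts already established, namely Fact~\ref{thm:splittingdecomp} and the observations collected at the start of the subsection, and then to run an elementary combinatorial enumeration. First I would recall from Fact~\ref{thm:splittingdecomp}(1) that $\{I_k^{r_k}\}_{k=1}^l\in\mathcal P_5$ lies in $\mathrm{Image}(\varpi)$ precisely when $\sum_k r_k\le n=4$ and, for each $k$, either $r_k=1$ or $2\le r_k\le \#I_k-1$; and from Fact~\ref{thm:splittingdecomp}(3) that membership in $\mathcal P'_{4,5}$ amounts to the same conditions with $2\le r_k\le\#I_k-1$ replaced by $2\le r_k=\#I_k-1$. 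Since the relevant data of $\{I_k^{r_k}\}_{k=1}^l$ that enters these conditions is only the multiset $\{(\#I_k)^{r_k}\}_k$ of sizes and ranks, it suffices to enumerate the multisets $\{(a_k)^{r_k}\}_k$ with $\sum_k a_k=5$, each $a_k\ge 1$, each $r_k$ a positive integer, $\sum_k r_k\le 4$, and for each $k$ either $r_k=1$ or $2\le r_k\le a_k-1$.

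Next I would organise the enumeration by the underlying integer partition of $5$ into parts $a_1\ge a_2\ge\cdots$. For the partition $(5)$: $a_1=5$, so $r_1=1$ or $2\le r_1\le 4$, and $\sum r_k=r_1\le 4$; all of $r_1=1,2,3,4$ are admissible, giving $\{5^r\}$ for $1\le r\le 4$. For the partition $(4,1)$: the part of size $1$ forces $r=1$, the part of size $4$ allows $r\in\{1,2,3\}$, and $\sum r_k\le 4$ is then automatic; this gives $\{1^1,4^r\}$ for $1\le r\le 3$. For $(3,2)$: the part of size $2$ forces $r=1$ (since $2\le r\le 1$ is vacuous), the part of size $3$ allows $r\in\{1,2\}$, and $\sum r_k=1+r\le 4$; this gives $\{2^1,3^r\}$ for $1\le r\le 2$. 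For $(3,1,1)$: each size-$1$ part has $r=1$, the size-$3$ part allows $r\in\{1,2\}$, $\sum r_k=1+1+r\le 4$; this gives $\{1^1,1^1,3^r\}$ for $1\le r\le 2$. For $(2,2,1)$: both size-$2$ parts force $r=1$, the size-$1$ part forces $r=1$, $\sum r_k=3\le 4$; this gives $\{1^1,2^1,2^1\}$. For $(2,1,1,1)$: all ranks are forced to $1$, $\sum r_k=4\le 4$; this gives $\{1^1,1^1,1^1,2^1\}$. The partition $(1,1,1,1,1)$ would force $\sum r_k=5>4$, so it is excluded. This exhausts the partitions of $5$ and yields exactly the list in part~(1). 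For part~(2), I would revisit the same list and simply check which splitting types fail the stronger constraint $r_k=\#I_k-1$ on parts with $r_k\ge 2$: the only parts with $r_k\ge 2$ occurring above are $5^2,5^3,5^4$ (needing $r=4$, so only $5^4$ survives, i.e. $\{5^2\}$ and $\{5^3\}$ are excluded while $\{5^4\}$ stays), $4^2,4^3$ (needing $r=3$, so $\{1^1,4^2\}$ is excluded while $\{1^1,4^3\}$ stays), and $3^2$ (needing $r=2$, which holds, so $\{2^1,3^2\}$ and $\{1^1,1^1,3^2\}$ stay). Hence the elements removed are exactly those of splitting type $\{5^3\}$, $\{5^2\}$, or $\{1^1,4^2\}$.

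The only genuine subtlety, rather than an obstacle, is bookkeeping: making sure the translation from "element $\{I_k^{r_k}\}_{k=1}^l$" to "splitting type $\{(\#I_k)^{r_k}\}_k$" is faithful for the purpose of deciding membership — which it is, since both Fact~\ref{thm:splittingdecomp}(1) and (3) are phrased purely in terms of $\#I_k$ and $r_k$ — and that every integer partition of $5$ has been considered. I expect the write-up to be a short paragraph invoking Fact~\ref{thm:splittingdecomp} followed by the six-case table above; no step should be hard.
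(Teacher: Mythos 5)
Your proof is correct and follows essentially the same route as the paper: the paper states Lemma~\ref{thm:splittingtype} immediately after the paragraph recalling the conditions $\sum r_k\le 4$ and $r_k=1$ or $2\le r_k\le\#I_k-1$ (resp. $=\#I_k-1$) from Fact~\ref{thm:splittingdecomp}, leaving the enumeration implicit, and you have simply carried out that enumeration over the integer partitions of $5$. Your bookkeeping over all seven partitions is complete and the check for part~(2) is right, including the observation that the statement's ``$\mathcal P_{3,5}'$'' must be read as $\mathcal P_{4,5}'$.
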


According to Proposition \ref{thm:rhobij} and Lemma \ref{thm:splittingtype}, we can describe orbits in the fibres on $\mathcal P_{4,5}'$ as follows: 
\begin{thm} \label{thm:split} Consider splittings $\{I_k^{r_k}\}_{k=1}^l\in \mathcal P_{4,5}'\subset \mathcal P_5$ classified in Lemma \ref{thm:splittingtype} and the maps $\varpi\colon (\mathbb P^{n-1})^m\overset{\pi}{\to} Map(2^{[5]},\mathbb N)\overset{\rho}{\to}\mathcal P_5$ defined in Definitions \ref{df:splitting}, \ref{df:rankmatrix}, and \ref{df:rho}. Then the fibres $\varpi^{-1}\left(\{I_k^{r_k}\}_{k=1}^l\right)$ are single $GL_4$-orbit. 

In particular, the one-to-one correspondence between the splittings in $\mathcal P_{4,5}'$, rank matrices in $Map(2^{[5]},\mathbb N)$, and $GL_n$-orbits in $(\mathbb P^{n-1})^m$ is given as in the Table \ref{tab:split} according to the types. 
\end{thm}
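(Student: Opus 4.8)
The plan is to apply Proposition \ref{thm:rhobij} directly, since Theorem \ref{thm:split} is essentially a specialisation of that general result to the case $(n,m)=(4,5)$ together with the combinatorial input of Lemma \ref{thm:splittingtype}. First I would invoke Proposition \ref{thm:rhobij}: for every element $\{I_k^{r_k}\}_{k=1}^l$ of $\mathcal P_{4,5}'$, the fibre $\varpi^{-1}(\{I_k^{r_k}\}_{k=1}^l)$ is a single $GL_4$-orbit (this is the content of Fact \ref{thm:splittingdecomp}(3), restated in the diagram of Proposition \ref{thm:rhobij} as the bijection on the bottom row). So the first assertion of the theorem is immediate once we know $\mathcal P_{4,5}'\subseteq \mathrm{Image}(\varpi)$, which is exactly Lemma \ref{thm:splittingtype}(2). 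No new argument is needed here beyond quoting these two results.

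Next I would make the ``in particular'' part explicit. By Lemma \ref{thm:splittingtype}(1)--(2), the splitting types occurring in $\mathcal P_{4,5}'$ are precisely those in the list of Lemma \ref{thm:splittingtype}(1) with the three exceptions $\{5^3\},\{5^2\},\{1^1,4^2\}$ removed; for each such splitting type one writes down the partitions $\{I_k\}_{k=1}^l$ of $[5]$ realising it (e.g. $\{1^1,1^1,1^1,2^1\}$ is realised by the ${5\choose 2}$ choices of the pair $I_4$ with $\#I_4=2$, and so on). For each of these the corresponding rank matrix $\varphi=\left.\rho\right|^{-1}(\{I_k^{r_k}\}_{k=1}^l)$ is given by the closed formula in Proposition \ref{thm:rhobij}, namely $\varphi(I)=\sum_{k=1}^l\min\{\#(I\cap I_k),r_k\}$, and a representative of the unique orbit is the vector $v_p$ of Fact \ref{thm:splittingdecomp}(2) (which for splittings in $\mathcal P_{4,5}'$ carries no parameter, since $\#I_k-1-r_k=0$ in each block). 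Filling in Table \ref{tab:split} is then a matter of evaluating this formula on the $2^5$ subsets for each splitting type, reading off the rank type (numbers of vertices, edges, faces), and matching it with the pictorial symbol in Table \ref{tab:type}; this is a finite bookkeeping task with no conceptual obstacle.

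The only genuine verification beyond citing earlier results is that the enumeration of splitting types in Lemma \ref{thm:splittingtype} is complete and that the exclusions correctly identify $\mathcal P_{4,5}'$ — but this has already been established as Lemma \ref{thm:splittingtype}, so within the proof of Theorem \ref{thm:split} it may be taken as given. I expect the main (though still routine) obstacle to be purely organisational: keeping the correspondence between a splitting type, its several partition-representatives, the resulting rank matrix, its rank type, and the symbol in Table \ref{tab:type} consistent across all entries of Table \ref{tab:split}, and confirming that distinct splittings in $\mathcal P_{4,5}'$ indeed yield distinct rank matrices (which follows from the injectivity of $\left.\rho\right|$ on $\left.\rho\right|^{-1}(\mathcal P_{4,5}')$ asserted in Proposition \ref{thm:rhobij}). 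There is no hard analysis or geometry; the proof is a proof by reduction to Proposition \ref{thm:rhobij} plus tabulation.
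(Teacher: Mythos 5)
Your proposal matches the paper's proof exactly: cite Proposition \ref{thm:rhobij} (resting on Fact \ref{thm:splittingdecomp}(3)) for the single-orbit statement on $\mathcal P_{4,5}'$, then use the explicit formula $\varphi(I)=\sum_{k}\min\{\#(I\cap I_k),r_k\}$ and Lemma \ref{thm:splittingtype} to fill in Table \ref{tab:split} by finite computation of rank types. One small inaccuracy: for splittings in $\mathcal P_{4,5}'$ it is \emph{not} always true that $\#I_k-1-r_k=0$ (e.g.\ for $\{5^1\}$ one has $\#I_1-1-r_1=3$); the parameter space $\prod_k(\mathbb P^{r_k-1})^{\#I_k-1-r_k}$ collapses to a point because either the exponent is $0$ (when $r_k=\#I_k-1$) or the base $\mathbb P^{r_k-1}=\mathbb P^0$ is a point (when $r_k=1$), not because the exponent always vanishes.
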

\begin{table}[h] 
\centering
\caption{Orbits as the fibres of $\varpi$ on $\mathcal P_{4,5}'$}
\label{tab:split}
\begin{tabular}{c|c|c|c}
splitting type  & rank type & representative of orbit$/\mathcal S_5$ & order of indices\\\hline
$\{5^4\}$ & $(5,10,10)$ & $[e_1,e_2,e_3,e_4,e_1+e_2+e_3+e_4]$ &  \\
$\{5^1\}$ & $(\emptyset)$ & $[e_1,e_1,e_1,e_1.e_1]$  \\
$\{1^1,4^3\}$ & $(5,10,7)$ & $[e_1,e_2,e_3,e_4,e_2+e_3+e_4]$ & $\mathcal S_5/\mathcal S_4$ \\
$\{1^1,4^1\}$ & $(2)$a & $[e_1,e_2,e_2,e_2,e_2]$ &$\mathcal S_5/\mathcal S_4$ \\
$\{2^1,3^2\}$ & $(4,4)$a & $[e_1,e_1,e_2,e_3,e_2+e_3]$ & $\mathcal S_5/(\mathcal S_2\times\mathcal S_3)$ \\
$\{2^1,3^1\}$ & $(2)$b & $[e_1,e_1,e_2,e_2,e_2]$ & $\mathcal S_5/(\mathcal S_2\times\mathcal S_3)$ \\
$\{1^1,1^1,3^2\}$ & $(5,8,5)$ & $[e_1,e_2,e_3,e_4,e_3+e_4]$  & $\mathcal S_5/(\mathcal S_2\times\mathcal S_3)$ \\
$\{1^1,1^1,3^1\}$ & $(3,3)$b & $[e_1,e_2,e_3,e_3,e_3]$ & $\mathcal S_5/(\mathcal S_2\times\mathcal S_3)$ \\
$\{1^1,2^1,2^1\}$  & $(3,3)$a & $[e_1,e_2,e_2,e_3,e_3]$  & $\mathcal S_5/(\mathcal S_2\times\mathcal S_2\times\mathcal S_2)$ \\
$\{1^1,1^1,1^1,2^1\}$ & $(4,6,4)$ & $[e_1,e_2,e_3,e_4,e_4]$ & $\mathcal S_5/(\mathcal S_2\times\mathcal S_3)$
\end{tabular}
\end{table}
To distinguish rank matrices which have the same rank type, but distinct splitting types, we mark on the rank type like as $(3)$a and $(3)$b. 
\begin{proof} From Proposition \ref{thm:rhobij}, we already know that there is a one-to-one correspondence between splittings in $\mathcal P_{4,5}'$, rank matrices, and orbits. In particular, representatives of such orbits are also given in Fact \ref{thm:splittingdecomp}. Hence we shall only check the rank types of corresponding rank matrices. 

From Proposition \ref{thm:rhobij}, the rank matrices corresponding to $\{I_k^{r_k}\}_{k=1}^l\in \mathcal P_{4,5}'$ is given by 
\[\varphi:=\rho|^{-1}\left(\{I_k^{r_k}\}_{k=1}^l\right)\colon I\mapsto \sum_{k=1}^l\min\{\#(I\cap I_k),r_k\}. \]
Hence, the set of maximal elements in $\varphi^{-1}(r)\subset 2^{[5]}$ is bijective to the set 
\[\left\{\{J_k^{s_k}\}_{k=1}^l \left|\ \sum_{k=1}^ls_k=r\ \textrm{where}\ s_k\leq r_k,\ \textrm{and}\ \ \begin{cases}
J_k=I_k & s_k=r_k \\ J_k\subset I_k,\ \#J_k=s_k &s_k<r_k\end{cases}\right.\right\}. \]
Then we obtain the correspondence in Table \ref{tab:split} by a direct computation. 
\end{proof}

\subsection{Parametrisations of infinitely many orbits}

In the previous subsection, we described all orbits obtained as fibres of $\varpi\colon (\mathbb P^3)^5\to\mathcal P_5$ on a subset $\mathcal P_{4,5}'\subset \mathrm{Image}(\varpi)$ determined in Lemma \ref{thm:splittingtype}. Hence we shall only consider the orbit decomposition of each fibre of $\varpi$ on $\mathrm{Image}(\varpi)\setminus\mathcal P_{4,5}'$. Since the splitting types $\{(\#I_k)^{r_k}\}_{k=1}^l$ of elements in $\mathrm{Image}(\varpi)\setminus\mathcal P_{4,5}'$ are fulfilled by $\{5^3\}$, $\{5^4\}$, and $\{1^1,4^2\}$ from Lemma \ref{thm:splittingtype}, we consider the orbit decomposition of each fibre by a case-by-case computation. For a preparation, we define the followings. 
\begin{df} \label{df:pgeneric} We define open dense subsets of projective spaces as follows. 
\begin{align*}
(\mathbb P^2)'&=\left\{[p_1e_1+p_2e_2+p_3e_3]\in\mathbb P^2\left|\ p_1p_2p_3(p_1-p_2)(p_2-p_3)(p_3-p_1)\neq 0\right.\right\} \\
(\mathbb P^1)'&=\left\{[p_1e_1+p_2e_2]\in\mathbb P^1\left|\ p_1p_2(p_1-p_2)\neq 0\right.\right\} \\
(\mathbb P^1\times \mathbb P^1)'&=\left\{[p_1e_1+p_2e_2,q_1e_1+q_2e_2]\in(\mathbb P^1)^2\left|\ p,q\in (\mathbb P^1)',\ p\neq q\right.\right\} 
\end{align*}
\end{df}

\bigskip
\noindent
\underline{{\bf Splitting type $\{5^3\}$}}: Remark that the splitting of type $\{5^3\}$ is only $\{[5]^3\}\in\mathrm{Image}(\varpi)\subset\mathcal P_5$, and we shall only consider its fibre. From Definition \ref{df:splitting}, the notion $\varpi(v)=\varpi([v_i]_{i=1}^5)=\{[5]^3\}$ is equivalent to that $v$ is of rank $3$ and essentially indecomposable. Then we have the following characterisation: 
\begin{lemma}\label{thm:indecomp35} For $v=[v_i]_{i=1}^5\in(\mathbb P^{3})^5$, it is in the fibre $\varpi^{-1}(\{[5]^3\})$ if and only if $v$ is of rank $3$ and there exists a $4$-tuple in $\{v_i\}_{i=1}^5$ which is in a general position (all triples in this $4$-tuple are linearly independent).  
\end{lemma}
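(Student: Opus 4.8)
The plan is to characterize the fibre $\varpi^{-1}(\{[5]^3\})$ by unwinding the two conditions defining it: rank exactly $3$, and essential indecomposability. The rank-$3$ condition is immediate from the definition of $\varpi$, so the content is to show that, among rank-$3$ configurations of five points, essential indecomposability is equivalent to the existence of a $4$-tuple in general position (i.e.\ four of the five points spanning $\mathbb K^3$ with every three of them linearly independent). I would prove the two implications separately.

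First, the ``if'' direction. Suppose $v=[v_i]_{i=1}^5$ has rank $3$ and, say, $\{v_1,v_2,v_3,v_4\}$ is in general position. Then I claim $v$ is essentially indecomposable. Suppose for contradiction there is a partition $[5]=I\amalg J$ with $I,J\neq\emptyset$ and $\langle v_i\rangle_{i\in I}\cap\langle v_j\rangle_{j\in J}=\bm 0$, so $\mathbb K^3 = \langle v_i\rangle_{i\in I}\oplus\langle v_i\rangle_{i\in J}$ once we note the two spans must together span $\mathbb K^3$. One of the two blocks, say $I$, contains at least two of the indices $1,2,3,4$; then $\dim\langle v_i\rangle_{i\in I}\geq 2$, forcing $\dim\langle v_i\rangle_{i\in J}\leq 1$, so $J$ contains at most one of $\{1,2,3,4\}$, hence $I$ contains at least three of them, hence $\dim\langle v_i\rangle_{i\in I}=3$ (three points of a general-position quadruple are independent) and $\dim\langle v_i\rangle_{i\in J}=0$, contradicting $J\neq\emptyset$ (every $v_j\neq\bm 0$). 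This settles one direction.

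Second, the ``only if'' direction, which I expect to be the main obstacle. Suppose $v$ has rank $3$ but \emph{no} $4$-tuple is in general position; I must produce a splitting $[5]=I\amalg J$ with the spans in direct sum. The idea is a case analysis on the ``degeneracy pattern'' of the five points. If some point is $\bm 0$—impossible here, points live in $\mathbb P^3$—or if two of the five points coincide projectively, or if three of them are collinear (lie on a common line $\mathbb P^1$), these are the local obstructions to general position; the hypothesis says every quadruple contains such an obstruction. Concretely: pick any four indices; by hypothesis either two of them are equal or three of them are collinear. I would argue that either (a) two of the five points coincide, in which case I can try to peel off one copy—but one must be careful, since $\{i\}$ and $[5]\setminus\{i\}$ need not be in direct sum; or (b) three points $v_a,v_b,v_c$ lie on a line $L=\langle v_a,v_b,v_c\rangle$ of dimension $2$, and then, since the total rank is $3$, the remaining two points together with $L$ span $\mathbb K^3$, and one checks whether $\langle v_a,v_b,v_c\rangle\cap\langle v_d,v_e\rangle = \bm 0$. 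If that intersection is $\bm 0$ we are done with $I=\{a,b,c\}$, $J=\{d,e\}$; if not, the line $\langle v_d,v_e\rangle$ meets $L$, and then I would look for a different partition, or argue that forcing general position to fail on \emph{every} quadruple pins the configuration down enough (e.g.\ four points collinear, or a triple plus a coincidence) to exhibit an explicit splitting. Organizing this case analysis cleanly—showing the failure of general position on all five quadruples propagates to a genuine global direct-sum decomposition—is the delicate part; I would lean on the classification of rank matrices / splitting types already available (Table~\ref{tab:split} and Lemma~\ref{thm:splittingtype}) to enumerate the rank-$3$ configurations and check the equivalence type-by-type, which turns the argument into a finite verification rather than an ad hoc chase.

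Finally, I would remark that combining this lemma with Theorem~\ref{thm:split} and the observation that $\{[5]^3\}$ is the unique splitting of type $\{5^3\}$ isolates exactly which rank matrices occur in this fibre, setting up the subsequent parametrisation of the orbit decomposition of $\varpi^{-1}(\{[5]^3\})$.
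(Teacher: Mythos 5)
Your ``if'' direction is correct and in fact more detailed than the paper, which simply calls it clear. The dimension count --- one block of the partition must contain at least two indices from the general-position quadruple, so it has dimension $\geq 2$, forcing the other block to dimension $\leq 1$, hence the first block absorbs at least three quadruple indices and has dimension $3$, starving the second block --- is a valid way to see it.

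The problem is the ``only if'' direction, which you yourself flag as the main obstacle but do not actually prove. Two specific issues.
First, the appeal to Lemma~\ref{thm:splittingtype} and Table~\ref{tab:split} to ``turn the argument into a finite verification'' is circular, or at best inapplicable: those results classify orbits in the fibres over $\mathcal P_{4,5}'$, and $\{[5]^3\}$ is explicitly excluded from $\mathcal P_{4,5}'$ (it is one of the three splitting types set aside in Lemma~\ref{thm:splittingtype}~(2)). The whole point of the present lemma is to prepare the ground for analysing the $\{[5]^3\}$ fibre, so you cannot lean on an already-finished classification of that fibre.
Second, your sketch in case~(b) already runs into the exact difficulty the proof must resolve: if three points $v_a,v_b,v_c$ span a plane $L$ and the remaining line $\langle v_d,v_e\rangle$ happens to meet $L$, you say you would ``look for a different partition, or argue that\dots{}pins the configuration down enough,'' but that is precisely where the work lies and it is left undone.

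The paper's own proof is cleaner and goes in the direct (not contrapositive) direction: since $v$ has rank $3$, some triple --- WLOG $\{v_1,v_2,v_3\}$ --- is a basis, and one writes $v_4=c_1v_1+c_2v_2+c_3v_3$, $v_5=d_1v_1+d_2v_2+d_3v_3$. Then a finite case analysis on which of $c_1,c_2,c_3$ vanish, using essential indecomposability at each step to constrain the $d_j$, produces an explicit general-position quadruple in every case (e.g.\ if all $c_j\neq 0$ take $\{v_1,v_2,v_3,v_4\}$; if exactly one $c_j=0$, indecomposability forces enough of the $d_j$ to be nonzero to find a quadruple among $\{v_1,\ldots,v_5\}$, after rewriting $v_5$ in terms of $v_4$ if necessary; if two $c_j=0$ then all $d_j\neq 0$ and $\{v_1,v_2,v_3,v_5\}$ works). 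You should replace your sketched degeneracy-pattern chase with this coordinate computation; it is both shorter and avoids the circularity.
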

\begin{proof}
It is clear that $v$ of rank $3$ is indecomposable if there exists a $4$-tuple in a general position. Hence we shall show the converse. 
If $\varpi(v)=\{[5^3]\}$, then there exists a triple in $\{v_i\}_{i=1}^5$ which forms a basis of $\langle v_i\rangle_{i=1}^5$. We may let it be $\{v_1,v_2,v_3\}$ without loss of generality, and set $v_4=c_1v_1+c_2v_2+c_3v_3$, $v_5=d_1v_1+d_2v_2+c_3v_3$. 

\begin{enumerate}
\item If $c_1c_2c_3\neq 0$, then $\{v_1,v_2,v_3,v_4\}$ is in a general position. 
\item If $c_1=0$ and $c_2c_3\neq 0$, then we can assume $v_4=v_2+v_3$ by normalising. Since $\langle v_1\rangle \cap \langle v_2,v_3,v_4\rangle=\bm 0$, we have $d_1\neq0$ and $d_2v_2+d_3v_3\neq \bm0$ from indecomposability. 

If $d_2d_3\neq 0$, then $\{v_1,v_2,v_3,v_5\}$ is in a general position since $d_1d_2d_3\neq 0$. 

If $d_i=0$ where $\{i,j\}=\{2,3\}$, then $v_5=d_1v_1+d_jv_j=d_1v_1-d_jv_i+d_j(v_i+v_j)=d_1v_1-d_jv_i+d_jv_4$ leads that $\{v_1,v_i,v_4,v_5\}$ is in a general position since $d_1d_j\neq 0$. 

Same argument holds if just one of $c_1,c_2,c_3$ is $0$. 
\item If just two of $c_1,c_2,c_3$ are $0$, then $d_1d_2d_3\neq 0$ holds from indecomposability, because $\langle v_i\rangle_{i=1}^5=\langle v_1\rangle\oplus\langle v_2\rangle\oplus\langle v_3\rangle$. Hence $\{v_1,v_2,v_3,v_5\}$ is in a general position. 
\end{enumerate}
\end{proof}
From Lemma \ref{thm:indecomp35}, we may assume $\{v_1,v_2,v_3,v_4\}$ is in a general position in $3$-dimensional space for $v=[v_i]_{i=1}^5\in \varpi^{-1}(\{[5]^3\})$ without loss of generality. Hence there exists a linear isomorphism $g\in GL_4$ satisfying $g\cdot v=[e_1,e_2,e_3,e_1+e_2+e_3,p]$ where $p=[p_1e_1+p_2e_2+p_3e_3]\in \mathbb P^2$, and it does not depend on the choice of $g\in GL_4$, because an isomorphism stabilising $[e_1,e_2,e_3,e_1+e_2+e_3]$ is a scalar action on $\langle e_1,e_2,e_3\rangle$. Remark that we have the followings: 
\begin{gather*}
|e_1\ e_2\ p|=p_3,\ \ |e_1\ e_3\ p|=-p_2,\ \ |e_2\ e_3\ p|=p_1,\\ |e_1\ e_1+e_2+e_3\ p|=p_3-p_2,\ \ |e_2\ e_1+e_2+e_3\ p|=p_1-p_3, \ \ |e_3\ e_1+e_2+e_3\ p|=p_2-p_1. 
\end{gather*}

\begin{enumerate}
\item If $p_1p_2p_3(p_1-p_2)(p_2-p_3)(p_3-p_1)\neq0$, in other words, if $p\in(\mathbb P^2)'$, then all triples in $\{v_i\}_{i=1}^5$ are linearly independent. Hence the rank matrix of $v$ is $\pi(v)\colon I\mapsto \dim\langle v_i\rangle_{i\in I}=\min\{\#I,3\}$. Define this rank matrix of type $(5,10)$ as $\varphi[5^3]$. Then we have the following. 
\[\begin{array}{ccc}
GL_4\backslash \pi^{-1}(\varphi[5^3]) & \simeq & \left(\mathbb P^2\right)' \\
\rotatebox{90}{$\in$} & & \rotatebox{90}{$\in$} \\
GL_4\cdot[e_1,e_2,e_3,e_1+e_2+e_3,p] & \reflectbox{$\mapsto$} & p
\end{array}\]
\item If exactly one of $p_1,p_2,p_3,p_1-p_2,p_2-p_3,p_3-p_1$ is $0$, then there exists a unique triple $J=\{2,3,5\},\{1,3,5\},\{1,2,5\},\{3,4,5\},\{1,4,5\},\{2,4,5\}$ such that $\{v_i\}_{i\in J}$ is in a general position in $2$-dimensional space, and other triples are all linearly independent respectively. Hence the rank matrix of $v$ is 
\[\pi(v)\colon I\mapsto \dim\langle v_i\rangle_{i\in I}=\begin{cases} \min\{\#I,3\} & I\neq J \\ 2 & I=J, \end{cases}\]
and we define this rank matrix of type $(5,8)$ as $\varphi[5^3;J]$. Considering the case where the $4$-tuple $\{v_i\}_{i\neq j}$ is in a general position for $2\leq j\leq 5$, we also obtain rank matrices $\varphi[5^3;J]$ with $\#J=3$ and $5\notin J$. 

For instance, let $J=\{2,3,5\}$, we have the following: 
\[\begin{array}{ccc}
GL_4\backslash \pi^{-1}(\varphi[5^3;\{2,3,5\}]) & \simeq & \left(\mathbb P^1\right)' \\
\rotatebox{90}{$\in$} & & \rotatebox{90}{$\in$} \\
GL_4\cdot[e_1,e_2,e_3,e_1+e_2+e_3,p_2e_2+p_3e_3] &\mapsto &[p_2e_1+p_3e_2] \end{array}\]
For other $\#J=3$, we can also construct bijections between $GL_4\backslash \pi^{-1}(\varphi[5^3;J])$ and $\left(\mathbb P^1\right)'$ by permuting indices. 
\end{enumerate}
If more than one of $p_1,p_2,p_3,p_1-p_2,p_2-p_3,p_3-p_1$ are $0$. We have $2$ situations. In the following cases, we set $\{1,2,3\}=\{i,j,k\}$. 
\begin{enumerate}
\setcounter{enumi}{2}
\item If $p_i=p_j-p_k=0$, then $p_j=p_k=p_j-p_i=p_k-p_i\neq 0$. Then there exist just $2$ triples $\{v_j,v_k,v_5\}$ and $\{v_i,v_4,v_5\}$ which are in general positions in $2$-dimensional spaces and others are linearly independent. Then the rank matrix of $v$ is 
\[\pi(v)\colon I\mapsto \dim\langle v_i\rangle_{i\in I}=\begin{cases} \min\{\#I,3\} & I\neq \{j,k,5\},\{i,4,5\} \\ 2 & \{j,k,5\},\{i,4,5\}, \end{cases}\]
and we define this rank matrix of type $(5,6)$ as $\varphi[5^3;\{j,k\},\{i,4\}]$. Considering the case where the $4$-tuple $\{v_i\}_{i\neq j}$ is in a general position for $2\leq j\leq 5$, we also obtain rank matrices $\varphi[5^3;J_1,J_2]$ with $\#J_1=\#J_2=2$ and $J_1\amalg J_2=[5]\setminus\{j\}$. 

For instance, if $p_1=p_2-p_3=0$, then $p$ defines a unique element $[e_2+e_3]$ of $\mathbb P^2$, and the fibre $\pi^{-1}\left(\varphi[5^3;\{2,3\},\{1,4\}]\right)$ is a single $GL_4$-orbit through $[e_1,e_2,e_3,e_1+e_2+e_3,e_2+e_3]$. Same argument holds for other $\varphi[5^3;J_1,J_2]$. 
\item If $p_i=p_j=0$, then $p_i-p_j=0$ and $p_k=p_k-p_i=p_k-p_j\neq 0$. Similarly, if $p_i-p_j=p_j-p_k=0$, then $p_k-p_i=0$ and $p_i=p_j=p_k\neq 0$. In these cases, there exists just one pair $\{v_k,v_5\}$ or $\{v_4,v_5\}$ which is proportional and the remaining $4$-tuple $\{v_i\}_{i=1}^4$ is in a general position respectively. By defining $J=\{k,5\}$ or $\{4,5\}$ respectively, we have the rank matrix $\pi(v)\colon I\mapsto \min\{\#(I/J),3\}$ of type $(4,6)$, which we define as $\varphi[5^3,J]$. 
Considering the case where the $4$-tuple $\{v_i\}_{i\neq j}$ is in a general position for $2\leq j\leq 5$, we also obtain rank matrices $\varphi[5^3;J]$ with $\#J=2$ and $5\notin J$. 

For instance, if $p_1=p_2=0$, $p$ defines a unique element $[e_3]$ of $\mathbb P^2$, and the fibre $\pi^{-1}\left(\varphi[5^3;\{3,5\}]\right)$ is a single $GL_4$-orbit through $[e_1,e_2,e_3,e_1+e_2+e_3,e_3]$. Same argument holds for other $\varphi[5^3;J]$ with $\#J=2$. 
\end{enumerate}

\bigskip
\noindent
\underline{{\bf Splitting type $\{5^2\}$:}} Remark that the splitting of type $\{5^2\}$ is only $\{[5]^2\}$. From Definition \ref{df:splitting}, $v=[v_i]_{i=1}^5\in (\mathbb P^3)^5$ is in the fibre $\varpi^{-1}(\{[5]^2\})$ if and only if $v$ is of rank $2$ and essentially indecomposable. 

\begin{lemma}\label{thm:indecomp25} For $v=[v_i]_{i=1}^5\in(\mathbb P^{3})^5$, it is in the fibre $\varpi^{-1}(\{[5]^2\})$ if and only if $v$ is of rank $2$ and there exists a triple in $\{v_i\}_{i=1}^5$ which is in a general position (all pairs are linearly independent).  
\end{lemma}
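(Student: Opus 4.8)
The plan is to mirror the structure of the proof of Lemma~\ref{thm:indecomp35}, but the argument here is shorter since rank $2$ is much more rigid than rank $3$. First I would dispose of the easy direction: if $v$ is of rank $2$ and some triple, say $\{v_1,v_2,v_3\}$, is in general position (all three pairwise non-proportional, hence spanning the same $2$-dimensional space $\langle v_i\rangle_{i=1}^5$), then $v$ cannot decompose. Indeed, any partition $[5]=I\amalg J$ with $I,J\neq\emptyset$ would have to put two of the indices $1,2,3$ into the same block, forcing that block's span to be $2$-dimensional (already all of $\langle v_i\rangle_{i=1}^5$) and the other block's span to be nonzero, so the two spans intersect nontrivially; this contradicts essential indecomposability. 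Hence $v\in\varpi^{-1}(\{[5]^2\})$ by Definition~\ref{df:splitting}.

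For the converse, suppose $v$ is of rank $2$ and essentially indecomposable, and assume for contradiction that no triple is in general position. Since $\dim\langle v_i\rangle_{i=1}^5=2$, pick a pair spanning this plane, say $\langle v_1,v_2\rangle=\langle v_i\rangle_{i=1}^5$, so $v_1,v_2$ are non-proportional. For each $i\in\{3,4,5\}$, the triple $\{v_1,v_2,v_i\}$ lives in the $2$-plane, so it is ``in general position'' precisely when $v_i$ is proportional to neither $v_1$ nor $v_2$; by our assumption this fails, so every $v_i$ with $i\geq 3$ is proportional to $v_1$ or to $v_2$. Partition $\{3,4,5\}=A\amalg B$ according to whether $v_i\parallel v_1$ or $v_i\parallel v_2$ (choosing arbitrarily if both, which happens only if $v_i=\bm 0$, excluded). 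Then setting $I=\{1\}\cup A$ and $J=\{2\}\cup B$ gives a partition of $[5]$ into nonempty blocks with $\langle v_i\rangle_{i\in I}=\langle v_1\rangle$ and $\langle v_i\rangle_{i\in J}=\langle v_2\rangle$, hence $\langle v_i\rangle_{i\in I}\cap\langle v_i\rangle_{i\in J}=\bm 0$ since $v_1,v_2$ are non-proportional. This contradicts essential indecomposability, completing the proof.

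The only point requiring minor care is handling the possibility that some $v_i$ ($i\geq 3$) is proportional to both $v_1$ and $v_2$ simultaneously — but that forces $v_1\parallel v_2$, contradicting our choice of a spanning pair, so it cannot occur, and the partition into $A$ and $B$ is genuinely well-defined (once we fix which block a doubly-proportional index goes to, though as noted this situation does not arise). I expect no real obstacle here: the statement is the rank-$2$ analogue of Lemma~\ref{thm:indecomp35} and, because a $2$-dimensional space has no intermediate subspaces, the case analysis that made the rank-$3$ proof long collapses to the single dichotomy ``proportional to $v_1$ or to $v_2$.''
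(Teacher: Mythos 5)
Your proof is correct and takes essentially the same approach as the paper: the paper also picks a spanning pair and observes that if all remaining vectors were proportional to one of the two, the configuration would split, contradicting indecomposability. Your version states the contrapositive and spells out the easy direction more explicitly, but the underlying argument is identical.
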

\begin{proof}
It is clear that $v$ of rank $2$ is indecomposable if there exists a triple in a general position. Hence we shall show the converse. 

If $\varpi(v)=\{[5]^2\}$, then there exists a pair $\{v_i,v_j\}$ which forms a basis of $\langle v_i,v_j,v_k,v_l,v_m\rangle$ where $\{i,j,k,l,m\}=[5]$. If $v_k$, $v_l$ and $v_m$ is proportional to either $v_i$ or $v_j$, then $v$ is decomposed into $\langle  v_i\rangle\oplus \langle v_j\rangle$. Hence at least one of them is proportional to neither $v_i$ nor $v_j$. Hence the claim holds. 
\end{proof}

From Lemma \ref{thm:indecomp25}, we may assume $\{v_1,v_2,v_3\}$ is in a general position in $2$-dimensional space for $v=[v_i]_{i=1}^5\in \varpi^{-1}(\{[5]^2\})$ without loss of generality. Hence there exists a linear isomorphism $g\in GL_4$ satisfying $g\cdot v=[e_1,e_2,e_1+e_2,p,q]$ where $p,q\in \mathbb P^1$, and $(p,q)$ does not depend on the choice of $g\in GL_4$, because an isomorphism stabilising  $[e_1,e_2,e_1+e_2]$ is a scalar action on $\langle e_1,e_2\rangle$. Remark that $p\in(\mathbb P^1)'$ if and only if $p\neq[e_1],[e_2],[e_1+e_2]$. 

\begin{enumerate}
\item If $p,q\in (\mathbb P^1)'$ and $p\neq q$, in other words, if $(p,q)\in \left(\mathbb P^1\times\mathbb P^1\right)'$, then all pairs in $\{v_i\}_{i=1}^5$ are not proportional. Hence the rank matrix of $v$ is $\pi(v)\colon I\mapsto \min\{\#I,2\}$. We define this rank matrix of type $(5)$ as $\varphi[5^2]$, and we have the following bijection: 
\[\begin{array}{ccc}
GL_4\backslash \pi^{-1}(\varphi[5^2]) & \simeq & \left(\mathbb P^1\times\mathbb P^1\right)' \\
\rotatebox{90}{$\in$} & & \rotatebox{90}{$\in$} \\
GL_4\cdot[e_1,e_2,e_1+e_2,p,q] &\mapsto & (p,q). \end{array}\]

\item If exactly one of $p\notin(\mathbb P^1)'$, $q\notin(\mathbb P^1)'$, or $p=q$ holds, then there exist a unique pair $\{v_i\}_{i\in J}$ which is proportional to each others where $J=\{j,4\}$, $\{j,5\}$, or $\{4,5\}$ for $j\in[3]$ respectively. Hence the rank matrix of $v$ is  $I\mapsto \min\{\#(I/J),2\}$, and we define this rank matrix of type $(4)$ as $\varphi[5^2;J]$. Considering the cases where other triples than $\{v_i\}_{i=1}^3$ are in a general position, we also obtain rank matrices $\varphi[5^2;J]$ for $\#J=2$ and $J\subset[3]$. 

For instance, let $J=\{4,5\}$, we have the following bijection: 
\[\begin{array}{ccc}
GL_4\backslash \pi^{-1}(\varphi[5^2;\{4,5\}]) & \simeq & \left(\mathbb P^1\right)' \\
\rotatebox{90}{$\in$} & & \rotatebox{90}{$\in$} \\
GL_4\cdot[e_1,e_2,e_1+e_2,p,p] &\mapsto & p. \end{array}\]
For other $\varphi[5^2;J]$ with $\#J=2$, we can also construct bijections between the fibre $GL_4\backslash \pi^{-1}(\varphi[5^2;J])$ and $(\mathbb P^1)'$ similarly. 

\item If $p,q\notin(\mathbb P^1)'$ and $p\neq q$, then there exist exactly two disjoint pairs $\{v_i,v_4\}$ and $\{v_j,v_5\}$ which are proportional to each others where $1\leq i\neq j\leq 3$. Hence the rank matrix of $v$ is $\pi(v)\colon I\mapsto \min\{\#(I/(i\sim 4,j\sim 5)),2\}$. We define this rank matrix of type $(3)$a as $\varphi[5^2;\{i,4\},\{j,5\}]$. Considering the cases where other triples than $\{v_i\}_{i=1}^3$ are in a general position, we also obtain rank matrices $\varphi[5^2;J_1,J_2]$ for $\#J_1=\#J_2=2$ and $J_1\amalg J_2=[5]\setminus\{4\}$ or $[5]\setminus\{5\}$. 

For instance, $\pi^{-1}\left(\varphi[5^2;\{1,4\},\{2,5\}]\right)$ is a single orbit through $[e_1,e_2,e_1+e_2,e_1,e_2]$. 

\item If $p,q\notin(\mathbb P^1)'$ and $p= q$, then there exists a unique triple $\{v_i,v_4,v_5\}$ which are proportional to each others where $1\leq i\leq 3$. Hence the rank matrix of $v$ is $\pi(v)\colon I\mapsto \min\{\#(I/(i\sim 4\sim 5)),2\}$. We define this rank matrix of type $(3)$b as $\varphi[5^2;\{i,4,5\}]$. Considering the cases where other triples than $\{v_i\}_{i=1}^3$ are in a general position, we also obtain rank matrices $\varphi[5^2;J]$ for $\#J=3$ where $\{4,5\}\nsubseteq J$. 

For instance, $\pi^{-1}\left(\varphi[5^2;\{1,4,5\}]\right)$ is a single orbit through $[e_1,e_2,e_1+e_2,e_1,e_1]$. 
\end{enumerate}

\bigskip
\noindent
\underline{{\bf Splitting type $\{1^1,4^2\}$: }} Remark that the splitting of type $\{1^1,4^2\}$ is fulfilled with $\{\{i\}^1,[5]\setminus\{i\}^2\}$ for some $i\in [5]$. In this argument, we focus only on $\{\{1\}^1,\{2,3,4,5\}^2\}$, and the others can be covered by the action of $\mathcal S_5/\mathcal S_4$. From Definition \ref{df:splitting}, $v=[v_i]_{i=1}^5\in (\mathbb P^3)^5$ is in the fibre $\varpi^{-1}(\{\{1\}^1,\{2,3,4,5\}^4\})$ if and only if $[v_i]_{i=2}^5$ is of rank $2$ and essentially indecomposable, and $v_1$ is not in $\langle v_i\rangle_{i=2}^5$. 
Furthermore, we can prove that $[v_i]_{i=2}^5$ of rank $2$ is essentially indecomposable if and only if there exists a triple in $\{v_i\}_{i=2}^5$ which is in a general position by a similar argument as in Lemma \ref{thm:indecomp25}. 

Hence, we may assume $\{v_2,v_3,v_4\}$ is in a general position in $2$-dimensional space for $v=[v_i]_{i=1}^5\in \varpi^{-1}(\{\{1\}^1,\{2,3,4,5\}^2\})$ without loss of generality. Hence there exists a linear isomorphism $g\in GL_4$ satisfying $g\cdot v=[e_3,e_1,e_2,e_1+e_2,p]$ where $p=[p_1e_1+p_2e_2]\in \mathbb P^1$, which does not depend on the choice of $g\in GL_4$, because an isomorphism stabilising $[e_1,e_2,e_1+e_2]$ is a scalar action on $\langle e_1,e_2\rangle$. 

\begin{enumerate}
\item If $p\in(\mathbb P^1)'$, then all pairs of $\{v_i\}_{i=2}^5$ are not proportional. Hence the rank matrix of $v$ is $\pi(v)\colon I\mapsto \#(I\cap\{1\})+\min\{\#(I\setminus\{1\}),2\}$. We define this rank matrix of type $(5,5)$ as $\varphi[4^2;1]$, and we have the following bijection: 
\[\begin{array}{ccc}
GL_4\backslash \pi^{-1}(\varphi[4^2;1]) & \simeq & \left(\mathbb P^1\right)' \\
\rotatebox{90}{$\in$} & & \rotatebox{90}{$\in$} \\
GL_4\cdot[e_3,e_1,e_2,e_1+e_2,p] &\mapsto & p \end{array}\]

\item If $p\notin(\mathbb P^1)'$, then there exist a unique pair $\{v_i,v_5\}$ which is proportional where $2\leq i\leq 4$. Hence the rank matrix of $v$ is  $I\mapsto \#(I\cap\{1\})+\min\{\#((I\setminus\{1\})/i\sim 5),2\}$, and we define this rank matrix of type $(4,4)$b as $\varphi[4^2;1;\{i,5\}]$. Considering the cases where other triples than $\{v_i\}_{i=2}^4$ are in a general position, we also obtain rank matrices $\varphi[4^2;1;J]$ for $\#J=2$ and $J\subset \{2,3,4,5\}$ with $5\notin J$. 

For instance, $\pi^{-1}\left(\varphi[4^2;1;\{4,5\}]\right)$ is a single orbit through $[e_3,e_1,e_2,e_1+e_2,e_1+e_2]$. 
\end{enumerate}

Combining these arguments, we have the following result: 
\begin{thm} \label{thm:parameter}
 Consider splittings $\{I_k^{r_k}\}_{k=1}^l\in \mathrm{Image}(\varpi)\setminus\mathcal P_{4,5}'$ classified in Lemma \ref{thm:splittingtype} and the maps $\varpi\colon (\mathbb P^{n-1})^m\overset{\pi}{\to} Map(2^{[5]},\mathbb N)\overset{\rho}{\to}\mathcal P_5$ defined in Definitions \ref{df:splitting}, \ref{df:rankmatrix}, and \ref{df:rho}. 
 \begin{enumerate}
 \item The fibres $\left.\rho\right|^{-1}\left(\{I_k^{r_k}\}_{k=1}^l\right)\subset Map(2^{[5]},\mathbb N)$ are fulfilled by the rank matrices $\varphi[\ast]\in Map(2^{[5]},\mathbb N)$ listed in Table \ref{tab:parameter1}, under the notation $\{i,j,k,l,m\}=[5]$. 
\item For each rank matrix $\varphi[\ast]$ listed in Table \ref{tab:parameter1}, the fibre $\pi^{-1}(\varphi[\ast])$ is decomposed into $GL_4$-orbits which have faithful parameters listed in Table \ref{tab:parameter2}. 
\end{enumerate}
\end{thm}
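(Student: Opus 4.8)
The plan is to assemble the case-by-case analysis carried out in the present subsection. First I would appeal to Lemma \ref{thm:splittingtype}: it says that $\mathrm{Image}(\varpi)\setminus\mathcal P_{4,5}'$ consists exactly of the elements of $\mathcal P_5$ whose splitting type is $\{5^3\}$, $\{5^2\}$, or $\{1^1,4^2\}$, namely $\{[5]^3\}$, $\{[5]^2\}$, and $\{\{i\}^1,([5]\setminus\{i\})^2\}$ for $i\in[5]$. Since $\varpi=\rho\circ\pi$ by Lemma \ref{thm:rho}, the fibre $\rho|^{-1}(\{I_k^{r_k}\}_{k=1}^l)$ consists precisely of those rank matrices realised by configurations lying in $\varpi^{-1}(\{I_k^{r_k}\}_{k=1}^l)$, so it suffices to run through these three fibres of $\varpi$; and because the five instances of type $\{1^1,4^2\}$ are permuted transitively by $\mathcal S_5$, it is enough to treat $\{\{1\}^1,\{2,3,4,5\}^2\}$ and transport the result by the action of $\mathcal S_5/\mathcal S_4$.

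For each fibre I would use the indecomposability criteria already established — Lemma \ref{thm:indecomp35} for $\{5^3\}$, Lemma \ref{thm:indecomp25} for $\{5^2\}$, and its analogue for $\{1^1,4^2\}$ — to locate a $4$-tuple (respectively a triple) of the $v_i$ in general position, and then bring $v$ into the normal form $[e_1,e_2,e_3,e_1+e_2+e_3,p]$ with $p\in\mathbb P^2$ (respectively $[e_1,e_2,e_1+e_2,p,q]$ with $(p,q)\in(\mathbb P^1)^2$, respectively $[e_3,e_1,e_2,e_1+e_2,p]$ with $p\in\mathbb P^1$) by an element of $GL_4$. The point that makes the parameter faithful is that any element of $GL_4$ fixing the reference tuple $[e_1,e_2,e_3,e_1+e_2+e_3]$ (respectively $[e_1,e_2,e_1+e_2]$) acts on the complementary span by scalars only, so $p$ (respectively $(p,q)$) is a well-defined invariant of the $GL_4$-orbit and conversely determines it; this yields a bijection between $GL_4\backslash\varpi^{-1}(\cdot)$ and a quotient of the relevant projective space.

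Next I would stratify the parameter space by the vanishing loci of the finitely many incidence quantities — the $3\times3$ minors $|e_a\ e_b\ p|$ displayed after Lemma \ref{thm:indecomp35} in the rank-$3$ case, and the proportionalities $p=[e_a],[e_b],[e_a+e_b]$ and $p=q$ in the rank-$2$ cases. On each stratum $\pi(v)$ is constant, and a direct computation of dimensions of spans identifies it with exactly one of the rank matrices $\varphi[5^3]$, $\varphi[5^3;J]$, $\varphi[5^3;J_1,J_2]$, $\varphi[5^2]$, $\varphi[5^2;J]$, $\varphi[5^2;J_1,J_2]$, $\varphi[4^2;1]$, $\varphi[4^2;1;J]$, $\ldots$ defined above; symmetrising over $\mathcal S_5$ gives precisely the entries of Table \ref{tab:parameter1}, which is part (1). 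For part (2), each such stratum is a locally closed subset of $\mathbb P^2$, $\mathbb P^1\times\mathbb P^1$, or $\mathbb P^1$, on which the bijection of the previous paragraph restricts to a bijection with $GL_4\backslash\pi^{-1}(\varphi[\ast])$; when the stratum is a single point the fibre is one orbit, and the open strata cut out by no or by one incidence condition give exactly $(\mathbb P^2)'$, $(\mathbb P^1\times\mathbb P^1)'$, $(\mathbb P^1)'$ of Definition \ref{df:pgeneric}, which are the faithful parameters recorded in Table \ref{tab:parameter2}.

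The hard part will not be any individual computation but the bookkeeping: one must verify that the list of strata is exhaustive — that every subset of incidence conditions which can simultaneously vanish for some $p$ actually occurs and yields the claimed rank type — that the labelling by subsets $J$ and by partitions $J_1\amalg J_2$ is compatible with the $\mathcal S_5$-orbits, and, crucially, that the rank matrices in Table \ref{tab:parameter1} are pairwise distinct, so that no two of the nominally different fibres secretly coincide. These verifications are exactly the four enumerated cases displayed for $\{5^3\}$, the four for $\{5^2\}$, and the two for $\{1^1,4^2\}$ above, together with the transitivity of the $\mathcal S_5$-action used to reduce to a single general-position tuple; the content of Theorem \ref{thm:parameter} is the assertion that these computations jointly exhaust every rank matrix lying over the three remaining splitting types.
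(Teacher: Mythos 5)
Your proposal reconstructs the paper's argument essentially verbatim: the reduction to the three splittings $\{5^3\}$, $\{5^2\}$, $\{1^1,4^2\}$ via Lemma \ref{thm:splittingtype}, the normal forms obtained from the indecomposability Lemmas \ref{thm:indecomp35} and \ref{thm:indecomp25}, the stratification of the parameter space by the finitely many incidence conditions, and the transport over $\mathcal S_5$ are exactly the steps carried out in Subsection~2.3, so the approach is the same. One small slip in the faithfulness argument: an element of $GL_4$ stabilising $[e_1,e_2,e_3,e_1+e_2+e_3]$ (resp.\ $[e_1,e_2,e_1+e_2]$) acts by a scalar on $\langle e_1,e_2,e_3\rangle$ (resp.\ $\langle e_1,e_2\rangle$), which is the subspace in which the fifth point $p$ actually lies --- not on the complementary span, where it is entirely unconstrained; as written your statement would fail to make $p$ an invariant, though the intended reasoning and the conclusion are clearly correct.
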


\begin{table}[h]
\centering
\caption{Rank matrices in the fibres on $\mathrm{Image}(\varpi)\setminus \mathcal P_{4,5}'$}
\label{tab:parameter1}
\begin{tabular}{c|c|c|c}
splitting & rank matrix & type & permutation \\
\hline\hline 
\multirow{9}{*}{$\{5^3\}$}
& $\varphi[5^3]\colon I\mapsto \min\{\#I,3\}$ & $(5,10)$ & \\
\cline{2-4}
& \begin{tabular}{c}$\varphi[5^3;J]\colon I\mapsto\begin{cases} \min\{\#I,3\} & I\neq \{i,j,k\} \\ 2 & I=\{i,j,k\}\end{cases}$ \\ where $\#J=3$\end{tabular} & $(5,8)$ & $\mathcal S_5/(\mathcal S_2\times\mathcal S_3)$ \\
\cline{2-4}
& \begin{tabular}{c}$\varphi[5^3;J_1,J_2]\colon I\mapsto\begin{cases} \min\{\#I,3\} & I\neq [5]\setminus J_\ast \\ 2 & I=[5]\setminus J_\ast\end{cases}$ \\ where $\#J_1=\#J_2=2$, $J_1\cap J_2=\emptyset$ \end{tabular}  & $(5,6)$ & $\mathcal S_5/(\mathcal S_2)^3$ \\
\cline{2-4}
& \begin{tabular}{c} $\varphi[5^3;J]\colon I\mapsto \min\{\#(I/J),3\}$ \\ where $\#J=2$\end{tabular} & $(4,6)$ & $\mathcal S_5/(\mathcal S_2\times\mathcal S_3)$\\
\hline
\multirow{7}{*}{$\{5^2\}$}
& $\varphi[5^2]\colon I\mapsto \min\{\#I,2\}$ & $(5)$ & \\
\cline{2-4}
& \begin{tabular}{c}$\varphi[5^2;J]\colon I\mapsto\min\{\#J,2\}$ \\ where $\#J=2$\end{tabular} & $(4)$ & $\mathcal S_5/(\mathcal S_2\times\mathcal S_3)$ \\
\cline{2-4}
& \begin{tabular}{c}$\varphi[5^2;J_1,J_2]\colon  I\mapsto\min\{\#(I/J_1/J_2,2\}$ \\ where $\#J_1=\#J_2=2$ and $J_1\cap J_2=\emptyset$\end{tabular}  & $(3)$a & $\mathcal S_5/(\mathcal S_2)^3$ \\
\cline{2-4}
& \begin{tabular}{c} $\varphi[5^3;J]\colon I\mapsto \min\{\#(I/J),2\}$ \\ where $\#J=3$ \end{tabular} & $(3)$b & $\mathcal S_5/(\mathcal S_2\times\mathcal S_3)$\\
\hline
\multirow{3}{*}{$\{1^1,4^2\}$}
& $\varphi[4^2;i]\colon I\mapsto \#(I\cap\{i\})+\min\{\#(I\setminus\{i\}),2\}$ & $(5,5)$ & $\mathcal S_5/\mathcal S_4$ \\
\cline{2-4}
& \begin{tabular}{c}$\varphi[4^2;i;J]\colon I\mapsto \#(I\cap \{i\})+\min\{\#((I\setminus\{i\})/J),2\}$ \\ where $\#J=2$ and $i\notin J$\end{tabular} & $(4,4)$b & $\mathcal S_5/(\mathcal S_2\times\mathcal S_2)$ 
\end{tabular}
\end{table}

\begin{table}[H]
\centering
\caption{Orbits in the fibres on $\mathrm{Image}(\varpi)\setminus \mathcal P_{4,5}'$}
\label{tab:parameter2}
\begin{tabular}{c|c|c|c|c|c}
rank matrix &  orbits & parameter  \\
\hline\hline 
$\varphi[5^3]$ & $GL_4\cdot [e_1,e_2,e_3,e_1+e_2+e_3,p]$ & $p\in (\mathbb P^2)'$  \\
$\varphi[5^3;\{1,2,5\}]$ & $GL_4\cdot [e_1,e_2,e_3,e_1+e_2+e_3,p]$ & $p\in (\mathbb P^1)'$  \\
$\varphi[5^3;\{1,2\},\{3,4\}]$ & $GL_4\cdot [e_1,e_2,e_3,e_1+e_2+e_3,e_1+e_2]$ & single  \\
$\varphi[5^3,\{1,5\}]$ & $GL_4\cdot [e_1,e_2,e_3,e_1+e_2+e_3,e_1]$ & single \\\hline
$\varphi[5^2]$ & $GL_4\cdot [e_1,e_2,e_1+e_2,p,q]$ & $(p,q)\in (\mathbb P^1\times\mathbb P^1)'$  \\
$\varphi[5^2;\{4,5\}]$ & $GL_4\cdot [e_1,e_2,e_1+e_2,p,p]$ & $p\in (\mathbb P^1)'$  \\
$\varphi[5^2;\{1,4\},\{2,5\}]$ & $GL_4\cdot [e_1,e_2,e_1+e_2,e_1,e_2]$ & single  \\
$\varphi[5^2;\{1,4,5\}]$ & $GL_4\cdot [e_1,e_2,e_1+e_2,e_1,e_1]$ & single  \\\hline
$\varphi[4^2;1]$ & $GL_4\cdot [e_3,e_1,e_2,e_1+e_2,p]$ & $p\in (\mathbb P^1)'$  \\
$\varphi[4^2;1;\{2,5\}]$ & $GL_4\cdot [e_3,e_1,e_2,e_1+e_2,e_1]$ & single
\end{tabular}
\end{table}

\section{Closure relations of orbits}

In this section, we determine closures of all orbits in terms of the classification of orbits in Theorems \ref{thm:split} and \ref{thm:parameter}. 

\subsection{Orbits as fibres of splittings\label{clsplit}}

First of all, we focus on the orbits which are obtained as fibres of the $GL_n$-invariant map $\varpi\colon (\mathbb P^{n-1})^m\to \mathcal P_m$, which are classified in Theorem \ref{thm:split} for the case where $(n,m)=(4,5)$. 

\bigskip
Now, for the $GL_n$-invariant map $\pi\colon (\mathbb P^{n-1})^m\to Map(2^{[m]},\mathbb N)$ defined in Definition \ref{df:rankmatrix}, fix an element $\varphi\in Map(2^{[m]},\mathbb N)$. Then we have the following characterisation of the $GL_n$-invariant fibre $\pi^{-1}(\varphi)\subset(\mathbb P^{n-1})^m$: 
\begin{align*}
\pi^{-1}(\varphi)&=\left\{[v_i]_{i=1}^m\in (\mathbb P^{n-1})^m \left|\ \dim\langle v_i\rangle_{i\in I}=\varphi(I)\ \textrm{for each}\ I\subset[m]\right. \right\} \\
&=\left\{[v_i]_{i=1}^m\in (\mathbb P^{n-1})^m \left|\ \begin{array}{l}\textrm{all $(\varphi(I)+1)$-minors of $(v_i)_{i\in I}$ vanish, and there exists} \\ \textrm{some non-vanishing $\varphi(I)$-minor of $(v_i)_{i\in I}$ for each $I\subset [m]$}\end{array}\right. \right\}.
\end{align*}
Hence, by denoting $\psi\leq\varphi$ if $\psi(I)\leq \varphi(I)$ for all $I\subset[m]$, we have 
\begin{align*}
\overline{\pi^{-1}(\varphi)}&\subset \left\{[v_i]_{i=1}^m\in (\mathbb P^{n-1})^m \left|\textrm{ all $(\varphi(I)+1)$-minors of $(v_i)_{i\in I}$ vanish for each $I\subset[m]$}\right. \right\} \\
&=\left\{[v_i]_{i=1}^m\in (\mathbb P^{n-1})^m \left|\ \dim\langle v_i\rangle_{i\in I}\leq \varphi(I)\ \textrm{for}\ I\subset[m]\right. \right\} \\
&=\coprod_{\psi\leq\varphi}\pi^{-1}(\psi), 
\end{align*} 
and the fibre $\pi^{-1}(\varphi)$ is open in it. Remark that, if the fibre $\pi^{-1}(\varphi)$ is not empty and the closed subset $\coprod_{\psi\leq \varphi}\pi^{-1}(\psi)$ is irreducible, then the equality holds. 
As we saw in Proposition \ref{thm:rhobij}, there are bijections for a subset $\mathcal P_{n,m}'\subset\mathrm{Image}(\varpi)$ as below: 
\[\begin{array}{cccccc}
\tilde\varpi\colon & GL_n\backslash\varpi^{-1}(\mathcal P_{n,m}') & \overset{\tilde\pi}{\simeq} & \left.\rho\right|^{-1}(\mathcal P_{n,m}') & \overset{\left.\rho\right|}{\simeq} & \mathcal P_{n,m}' \\
& \rotatebox{90}{$\in$} && \rotatebox{90}{$\in$} &&\rotatebox{90}{$\in$} \\
& GL_n\cdot[v_i]_{i=1}^m &\mapsto & \left(I\mapsto \dim\langle v_i\rangle_{i\in I}\right)  \\
&&& \left(I\mapsto \sum_{k=1}^l\min\{\#(I\cap I_k),r_k\}\right) & \reflectbox{$\mapsto$} & \{I_k^{r_k})\}_{k=1}^l
\end{array}\]
For an element $\mathcal I=\{I_k^{r_k}\}_{k=1}^l\in\mathcal \mathcal P_{n,m}'$, and $\varphi_{\mathcal I}:=\left.\rho\right|^{-1}(\mathcal I)$, then the fibre $\varpi^{-1}(\mathcal I)=\pi^{-1}(\varphi_{\mathcal I})\subset (\mathbb P^{n-1})^m$ is a single $GL_n$-orbit, which is classified in Theorem \ref{thm:split} where $(n,m)=(4,5)$. 
Furthermore, consider the condition $\dim\langle v_i\rangle_{i\in I}\leq \varphi_{\mathcal I}(I)$ for all $I\subset[m]$. If it holds for $[v_i]_{i=1}^m\in(\mathbb P^{n-1})^m$, then $\dim\langle v_i\rangle_{i\in I_k}\leq \varphi_{\mathcal I}(I_k)=r_k$. Conversely, if $\dim\langle v_i\rangle_{i\in I_k}\leq r_k$ for all $k\in[l]$, then we have $\dim\langle v_i\rangle_{i\in I}\leq\sum_{k=1}^l\dim\langle x_i\rangle_{i\in I\cap I_k}\leq \sum_{k=1}^l\min\{\#(I\cap I_k),r_k\}=\varphi_{\mathcal I}(I)$. Hence we can interpret the set $\coprod_{\varphi\leq \varphi_{\mathcal I}}\pi^{-1}(\varphi)$ as below: 
\begin{align*}
\coprod_{\varphi\leq\varphi_{\mathcal I}}\pi^{-1}(\varphi)&=\left\{[v_i]_{i=1}^m\in (\mathbb P^{n-1})^m \left|\ \dim\langle v_i\rangle_{i\in I}\leq \varphi_{\mathcal I}(I)\ \textrm{for each}\ I\subset[m]\right. \right\} \\
&=\left\{[v_i]_{i=1}^m\in (\mathbb P^{n-1})^m \left|\ \dim\langle v_i\rangle_{i\in I_k}\leq r_k\ \textrm{for each}\ 1\leq k\leq l\right. \right\} \\
&\simeq \prod_{k=1}^l X_{n,\#I_k}(r_k)
\end{align*}
where $X_{n,m}(r)$ denotes the irreducible closed subvariety of $(\mathbb P^{n-1})^m$ consisting of all elements with rank less than or equal to $r$. Hence combining these results, we have the following: 
\begin{df}\label{df:leq}
For $\varphi,\psi\in Map(2^{[m]},\mathbb N)$, we say $\psi\leq\varphi$ if $\psi(I)\leq \varphi(I)$ for all $I\subset [m]$. 
\end{df}
\begin{prop}\label{thm:clsplit} 
Consider the $GL_n$-invariant maps $\varpi\colon (\mathbb P^{n-1})^m\overset{\pi}{\to} Map(2^{[m]},\mathbb N) \overset{\rho}{\to} \mathcal P_m$ and the subset $\mathcal P_{n,m}'\subset \mathrm{Image}(\varpi)$, on which the fibres of $\varpi$ are single $GL_n$-orbits. For a rank matrix $\varphi\in \rho|^{-1}(\mathcal P_{n,m}')\subset Map(2^{[m]},\mathbb N)$, the closure of the orbit $\pi^{-1}(\varphi)$ is given as below: 
\begin{equation} \label{eq:clsingle}
\overline{\pi^{-1}(\varphi)}=\coprod_{\psi\leq\varphi}\pi^{-1}(\psi).\end{equation}
In particular, for the $GL_4$-orbits classified in Theorem \ref{thm:split} 
, the equality (\ref{eq:clsingle}) holds. 
\end{prop}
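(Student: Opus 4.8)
The plan is to assemble the three observations recorded in the discussion immediately preceding the statement. Since $\varphi$ lies in $\rho|^{-1}(\mathcal P_{n,m}')$, it equals $\varphi_{\mathcal I}=\rho|^{-1}(\mathcal I)$ for a unique $\mathcal I=\{I_k^{r_k}\}_{k=1}^l\in\mathcal P_{n,m}'$, and by Fact~\ref{thm:splittingdecomp}(3) the fibre $\pi^{-1}(\varphi)=\varpi^{-1}(\mathcal I)$ is a single $GL_n$-orbit; in particular it is nonempty, an explicit point being the representative $v_p$ of Fact~\ref{thm:splittingdecomp}(2). Next, as computed above, $\coprod_{\psi\le\varphi}\pi^{-1}(\psi)\simeq\prod_{k=1}^l X_{n,\#I_k}(r_k)$ as varieties. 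By the Remark preceding the statement (a nonempty open subset of an irreducible variety is dense in it), the equality \eqref{eq:clsingle} then follows once we know the right-hand side is irreducible, and for that it suffices that each $X_{n,m}(r)$ is irreducible, since a finite product of irreducible varieties over an algebraically closed field is irreducible.

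The only genuinely substantive point is therefore the irreducibility of $X_{n,m}(r)$, the closed subvariety of $(\mathbb P^{n-1})^m$ cut out by the vanishing of all $(r+1)$-minors. I would derive it from the incidence correspondence
\[
Z=\left\{(W,[v_1],\dots,[v_m])\in \mathrm{Gr}(r,n)\times(\mathbb P^{n-1})^m\ \middle|\ v_i\in W\ \text{for all }i\right\}.
\]
The first projection $Z\to\mathrm{Gr}(r,n)$ is a locally trivial fibration with fibre $(\mathbb P^{r-1})^m$, so $Z$ is irreducible (irreducible base, irreducible fibre). The second projection sends $Z$ onto $X_{n,m}(r)$, because every $m$-tuple whose span has dimension $\le r$ is contained in some $r$-dimensional subspace $W$. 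Hence $X_{n,m}(r)$, being the image of an irreducible variety under a morphism, is irreducible. (This is the fact already asserted where $X_{n,m}(r)$ is introduced above.)

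For the last assertion, the $GL_4$-orbits classified in Theorem~\ref{thm:split} are precisely the fibres $\pi^{-1}(\varphi)$ with $\varphi\in\rho|^{-1}(\mathcal P_{4,5}')$, so the first part applies verbatim with $(n,m)=(4,5)$ and yields $\overline{\pi^{-1}(\varphi)}=\coprod_{\psi\le\varphi}\pi^{-1}(\psi)$ for each of them. I do not expect a serious obstacle: the argument is bookkeeping once the irreducibility of $X_{n,m}(r)$ is available. The only place requiring minor care is checking that $\coprod_{\psi\le\varphi}\pi^{-1}(\psi)\simeq\prod_k X_{n,\#I_k}(r_k)$ is an isomorphism of varieties, not merely a set bijection; this is clear since the partition $[m]=\coprod_k I_k$ induces an isomorphism $(\mathbb P^{n-1})^m\simeq\prod_k(\mathbb P^{n-1})^{\#I_k}$ under which the conditions $\dim\langle v_i\rangle_{i\in I_k}\le r_k$ decouple into the defining conditions of the factors $X_{n,\#I_k}(r_k)$.
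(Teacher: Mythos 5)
Your proposal is correct and follows the paper's argument essentially verbatim: containment $\overline{\pi^{-1}(\varphi)}\subset\coprod_{\psi\le\varphi}\pi^{-1}(\psi)$ via minor-vanishing, nonemptiness from Fact~\ref{thm:splittingdecomp}(3), openness, and irreducibility via the identification with $\prod_k X_{n,\#I_k}(r_k)$. The only difference is that you supply a proof (via the Grassmannian incidence correspondence) of the irreducibility of $X_{n,m}(r)$, which the paper simply asserts when introducing that notation.
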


\subsection{Infinitely many orbits in the fibres of rank matrices\label{clparameter}}

In this subsection, we determine the closures of orbits which are \emph{not} obtained as fibres of $\varpi\colon (\mathbb P^3)^5\to\mathcal P_5$, but are decomposed into several fibres of $\pi\colon (\mathbb P^3)^5\to Map(2^{[5]},\mathbb N)$, and are parametrised by some infinite parameters. These orbits lie in the fibres on $\mathrm{Image}(\varpi)\setminus\mathcal P_{4,5}'$, and are classified in Theorem \ref{thm:parameter}. 

Remark that there exist closed embeddings $\tilde\iota\colon GL_r\backslash (\mathbb P^{r-1})^m \hookrightarrow GL_n\backslash (\mathbb P^{n-1})^m,\ GL_r\cdot[v_i]_{i=1}^m\mapsto GL_n\cdot[\iota(v_i)]_{i=1}^m$ for linear inclusions $\iota\colon \mathbb K^r\hookrightarrow \mathbb K^n$, which preserve the splittings and rank matrices. Hence for $v \in (\mathbb P^{r-1})^m$, we have $\tilde\pi(\tilde\iota(GL_r\cdot v))=\tilde\pi(GL_r\cdot v)$ (resp. $\tilde\varpi$), and $\overline{GL_n\cdot \iota(v)}=\tilde\iota\left(\overline{GL_r\cdot v}\right)$. 

\subsubsection{Orbits of the splitting type $\{5^2\}$}

For the unique splitting $\{[5]^2\}\in \mathrm{Image}(\varpi)\setminus\mathcal P_{4,5}'$ of type $\{5^2\}$, according to Theorem \ref{thm:parameter}, there are $4$ types, $(5)$, $(4)$, $(3)$a and $(3)$b of rank matrices. 

\bigskip
\noindent
\bigskip
\underline{{\bf Rank type $(5)$: }} 

Consider the rank matrix $\varphi[5^2]\colon I\mapsto \min\{\#I,2\}$ of type $(5)$. Then an orbit with this rank matrix is represented as $GL\cdot[e_1,e_2,e_1+e_2,p,q]$ where $(p,q)\in\left(\mathbb P^1\times\mathbb P^1\right)'$. 

From now on, for the polynomial ring on $(2\times 5)$-matrices $X$, the notion $|i,j|$ denotes the polynomial defined by the $2$-minor of $X$ consisting of the $i$-th and $j$-th columns, which is homogeneous for each columns. Then we define a homogeneous ideal $I_{p,q}$ generated by the following $5$ polynomials. 
\begin{align}
P_1(X)&=q_1(p_2-p_1)|2,4||5,3|+p_1(q_2-q_1)|2,5||3,4| \notag\\
P_2(X)&=q_2(p_2-p_1)|1,4||5,3|+p_2(q_2-q_1)|1,5||3,4| \notag \\
P_3(X)&=p_1q_2|1,4||5,2|+p_2q_1|1,5||2,4| \label{eq:52} \\
P_4(X)&=q_2|1,3||5,2|+q_1|1,5||2,3| \notag\\
P_5(X)&=p_2|1,3||4,2|+p_1|1,4||2,3| \notag
\end{align}
Since these polynomials are relatively invariant under the $GL_2$-action, the ideal $I_{p,q}$ defines a $GL_2$-stable closed subset $Z(I_{p,q})$ of $(\mathbb P^1)^5$. 
Clearly we have $v_{p,q}:=[e_1,e_2,e_1+e_2,p,q]\in Z(I_{p,q})$, hence $\overline{GL_2\cdot v_{p,q}}\subset Z(I_{p,q})$. 
We shall show the converse inclusion. For this purpose, at first we determine the set $Z(I_{p,q})$ according to the classification of orbits. 
\begin{enumerate}
\item First, we consider the case where all $2$-minors of $v=[v_i]_{i=1}^5$ do not vanish, or equivalently, the rank matrix of $v$ is $\varphi[5^2]$. In this case, $v$ is in an orbit through $v_{r,s}$ for some $(r,s)\in\left(\mathbb P^1\times\mathbb P^1\right)'$. Then we have $P_4(v)=q_2s_1-q_1s_2$ and $P_5(v)=p_2r_1-p_1r_2$, which leads that $v_{r,s}$ is in $Z(I_{p,q})$ if and only if $(p,q)=(r,s)$. 
Hence in this case, $GL_4\cdot v_{p,q}$ with the rank matrix $\varphi[5^2]$ of type $(5)$ is the only orbit in $Z(I_{p,q})$. 

\item On the other hand, if there exists a vanishing $2$-minor $|i,j|$ of $v$, then 
\begin{align*}
P_k(v)=|i,l||i,m|,\ P_l(v)=|i,m||i,k|,\ P_m(v)=|i,k||i,l|
\end{align*}
up to some non-zero scalar where $\{i,j,k,l,m\}=[5]$. Hence, if $v\in Z(I_{p,q})$, then at least two of $[v_k],[v_l],[v_m]$ have to coincide with $[v_i]=[v_j]$. In other words, at least $4$ columns have to be proportional to each others. 
Conversely, if $4$ columns of $v$ are proportional to each other, then all polynomials vanish because each term of them can be divided by some $2$-minor consisting of $2$ of those $4$ columns. 
Hence in this case, the orbits contained in $Z(I_{p,q})$ is fulfilled with the the splitting types $\{1^1,4^1\}$ and $\{5^1\}$, which correspond to the rank types $(2)$a and $(1)$. 
\end{enumerate}
Now, we shall show that $Z(I_{p,q})$ is contained in the closure of $GL_2\cdot v_{p,q}$. Consider the map 
\[\begin{array}{ccc}
\left(\mathbb P^1\times\mathbb P^1\times\mathbb P^1\right)' &\to &(\mathbb P^1)^5 \\
\rotatebox{90}{$\in$} && \rotatebox{90}{$\in$} \\ 
v=[v_i]_{i=1}^3 &\mapsto &[v_1,v_2,v_3,p_1|3,2|v_1+p_2|1,3|v_2, q_1|3,2|v_1+q_2|1,3|v_2]
\end{array}\]
where $\left(\mathbb P^1\times\mathbb P^1\times\mathbb P^1\right)'$ denotes the open dense subset of $(\mathbb P^1)^3$ consisting of all elements of the rank $2$. Then the preimage of $GL_2\cdot v_{p,q}$ consists of all elements such that $v_1,v_2,v_3$ are in a general position, which is an open dense subset of $\left(\mathbb P^1\times\mathbb P^1\times\mathbb P^1\right)'$. Hence whole image of this map is contained in the closure of $GL_2\cdot v_{p,q}$. 

Now, if we consider $v=[v_1,v_2,v_3]\in (\mathbb P^1\times \mathbb P^1\times \mathbb P^1)'$ where $[v_i]=[v_j]\neq [v_k]$ for $\{i,j,k\}=[3]$, then the value of this map corresponds to the orbit consisting of all elements satisfying $[v_i]=[v_j]=[v_4]=[v_5]\neq [v_k]$, whose splitting is $\{\{k\}^1,[5]\setminus\{k\}^1\}$ of type $\{1^1,4^1\}$. By considering the map permuting indices, we also obtain the orbits with the splittings $\{\{k\}^1,[5]\setminus\{k\}^1\}$ where $k=4,5$ as the values of the map. 

Hence we have shown that the orbits of the splitting type $\{1^1,4^1\}$ are all contained in $\overline{GL_2\cdot v_{p,q}}$. Furthermore, the orbit with the splitting type $\{5^1\}$ is contained in the closure of those of type $\{1^1,4^1\}\in \mathcal P_{2,4}'$ from Proposition \ref{thm:clsplit}, hence we have the following result: 

\begin{lemma} \label{thm:cl52} For $v_{p,q}=[e_1,e_2,e_1+e_2,p,q]\in(\mathbb P^3)^5$ with the rank matrix $\varphi[5^2]$ of type $(5)$ where $(p,q)\in (\mathbb P^1\times\mathbb P^1)'$, the closure of the orbit through $v_{p,q}$ is the zero locus of the ideal generated by the polynomials in (\ref{eq:52}), and 
\[\overline{GL_4\cdot v_{p,q}}=GL_4\cdot v_{p,q}\ \amalg\ \coprod_{i=1}^5\varpi^{-1}\left(\{\{i\}^1,[5]\setminus\{i\}^1\}\right)\ \amalg\ \varpi^{-1}\left(\{[5]^1\}\right). \]
\end{lemma}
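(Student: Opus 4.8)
The plan is to establish the two assertions of Lemma~\ref{thm:cl52} in tandem: that $\overline{GL_4\cdot v_{p,q}}$ equals the zero locus $Z(I_{p,q})$ of the ideal generated by the polynomials~(\ref{eq:52}), and that this closure decomposes as the stated disjoint union of $\varpi$-fibres. Since $GL_4\cdot v_{p,q}$ lies inside the $GL_2$-stable subspace $(\mathbb P^1)^5\hookrightarrow(\mathbb P^3)^5$ via the inclusion $\mathbb K^2\hookrightarrow\mathbb K^4$, and closures commute with such inclusions (as noted in the paragraph preceding Subsection~3.2.1), it suffices to work entirely inside $(\mathbb P^1)^5$ with $GL_2$ acting.

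First I would verify the inclusion $\overline{GL_2\cdot v_{p,q}}\subset Z(I_{p,q})$. Each $P_k(X)$ is relatively invariant under the $GL_2$-action (being built from products of $2$-minors with matching column-multiplicities), so $Z(I_{p,q})$ is $GL_2$-stable and closed; since $v_{p,q}\in Z(I_{p,q})$ by a direct substitution check, the orbit closure is contained in $Z(I_{p,q})$. Next I would pin down $Z(I_{p,q})$ set-theoretically by splitting into two cases exactly as in the excerpt: (1) when no $2$-minor of $v$ vanishes, the equations $P_4,P_5$ force $(p,q)=(r,s)$ for the canonical form $v_{r,s}$, so the only such point of $Z(I_{p,q})$ is the orbit $GL_2\cdot v_{p,q}$ itself; (2) when some $2$-minor $|i,j|$ vanishes, the polynomials reduce (up to nonzero scalars) to $P_k(v)=|i,l||i,m|$ and its cyclic variants, forcing at least four of the five columns to be mutually proportional, which identifies the remaining points of $Z(I_{p,q})$ with the fibres $\varpi^{-1}(\{\{i\}^1,[5]\setminus\{i\}^1\})$ and $\varpi^{-1}(\{[5]^1\})$.

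Then I would prove the reverse inclusion $Z(I_{p,q})\subset\overline{GL_2\cdot v_{p,q}}$ by producing an explicit rational/morphism-style parametrisation: the map from $(\mathbb P^1\times\mathbb P^1\times\mathbb P^1)'$ (triples of rank $2$) to $(\mathbb P^1)^5$ sending $[v_1,v_2,v_3]$ to $[v_1,v_2,v_3,\,p_1|3,2|v_1+p_2|1,3|v_2,\,q_1|3,2|v_1+q_2|1,3|v_2]$. Its image is irreducible (continuous image of an irreducible variety) and its restriction to the open dense locus where $v_1,v_2,v_3$ are in general position lands in $GL_2\cdot v_{p,q}$, so the whole image lies in $\overline{GL_2\cdot v_{p,q}}$; specialising to $[v_i]=[v_j]\neq[v_k]$ hits the orbit with splitting $\{\{k\}^1,[5]\setminus\{k\}^1\}$. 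Permuting indices (which is harmless since $v_{p,q}$ and $Z(I_{p,q})$ are visible in every chart) yields all five such orbits, and the orbit with splitting $\{[5]^1\}$ then comes for free via Proposition~\ref{thm:clsplit} since $\{1^1,4^1\}\in\mathcal P_{2,4}'$ and the rank-$1$ locus lies in the closure of the rank-$(\le 1)$ configurations. Combining the two inclusions with the set-theoretic description of $Z(I_{p,q})$ gives both claims.

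The main obstacle I expect is the case~(2) analysis of $Z(I_{p,q})$: showing that a single vanishing $2$-minor forces \emph{four} columns to be proportional requires carefully substituting the vanishing relation into all five $P_k$ and tracking which $2$-minors survive, and then checking the converse (that four proportional columns kill every $P_k$) term by term. This is a finite but somewhat delicate bookkeeping exercise with the $2$-minor notation $|i,j|$; the parametrisation argument for the reverse inclusion, by contrast, is essentially a formality once the map is written down, and the invariance of the $P_k$ is immediate from their homogeneity in each column.
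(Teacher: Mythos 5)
Your proposal follows essentially the same route as the paper's own argument: establish $\overline{GL_2\cdot v_{p,q}}\subset Z(I_{p,q})$ by relative invariance of the $P_k$, determine $Z(I_{p,q})$ set-theoretically by the same two-case analysis on whether a $2$-minor vanishes (with the same reduction $P_k(v)=|i,l||i,m|$ up to scalar forcing four proportional columns), and then establish the reverse inclusion via the identical parametrising map from $(\mathbb P^1\times\mathbb P^1\times\mathbb P^1)'$, followed by index permutation and an appeal to Proposition~\ref{thm:clsplit} for the $\{5^1\}$ stratum. No gaps; this matches the paper.
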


\noindent
\underline{{\bf Rank type $(4)$: }}

\bigskip
Consider the rank matrix $\varphi[5^2;J]\colon I\mapsto \min\{\#(I/J),2\}$ of type $(4)$ where $\#J=2$. It is clear that the closed subset $\{v=[v_i]_{i=1}^5\left|\ \dim\langle v_i\rangle_{i\in J}=1\right.\}\subset (\mathbb P^1)^5$ is naturally identified with $(\mathbb P^1)^4$. Hence we shall only consider the $GL_2$-orbit through $v_p:=[e_1,e_2,e_1+e_2,p]$ for $p=[p_1e_1+p_2e_2]\in (\mathbb P^1)'$. 
Now, let us consider the polynomial 
\begin{align}
P(X)&=p_2|1,3||4,2|+p_1|1,4||2,3| \label{eq:522} \\
&=-p_2|1,2||3,4|+(p_1-p_2)|1,4||2,3|=(p_2-p_1)|1,3||4,2|-p_1|1,2||3,4|,\notag \end{align}
and let $Z(I_p)\subset(\mathbb P^1)^4$ be the zero locus of the principle ideal $I_p$ generated by the polynomial (\ref{eq:522}). Then we clearly have $\overline{GL_2\cdot v_p}\subset Z(I_p)$. Hence we shall determine the set $Z(I_p)$ and show the converse inclusion. 

\begin{enumerate}
\item First, we consider the case where all $2$-minors of $v=[v_i]_{i=1}^4\in(\mathbb P^1)^4$ do not vanish. In this case, $v$ is in an orbit $GL_2\cdot v_q$ for some $q\in\left(\mathbb P^1\right)'$, and we have $P(v)=p_2q_1-p_1q_2$. 
Hence in this case, $GL_2\cdot v_p$ is the only orbit contained in $Z(I_p)$. 

\item On the other hand, assume that there exists a vanishing $2$-minor $|i,j|$ of $v$. If $P(v)=0$, then $|i,k||j,l|=|i,l||j,k|=0$ where $\{i,j,k,l\}=[4]$. Hence either $[v_k]$ or $[v_l]$ has to coincides with $[v_i]=[v_j]$. In other words, at least $3$ columns have to be proportional to each others. 
Conversely, if $3$ columns of $v$ are proportional to each others, then the polynomial $P$ vanishes because each term of it can be divided by some $2$-minor consisting of $2$ of those $3$ columns. 
Hence in this case, orbits in $Z(I_p)$ is fulfilled with the splitting types $\{1^1,3^1\}$ and $\{4^1\}$. 
\end{enumerate}
Now, we shall show that $Z(I_p)$ is contained in the closure of $GL_2\cdot v_p$. Consider the map 
\[\begin{array}{ccc}
\left(\mathbb P^1\times\mathbb P^1\times\mathbb P^1\right)' &\to &(\mathbb P^1)^5 \\
\rotatebox{90}{$\in$} && \rotatebox{90}{$\in$} \\
v=[v_i]_{i=1}^3 &\mapsto &[e_1,e_2,e_3,p_1|3,2|v_1+p_2|1,3|v_2]
\end{array}\]
where $\left(\mathbb P^1\times\mathbb P^1\times\mathbb P^1\right)'$ is the open dense subset of $(\mathbb P^1)^3$ as in the previous case. Similarly to the previous case, whole image of this map is contained in the closure of $GL_2\cdot v_p$. 

If we consider a rank $2$ matrix such that $[v_i]\neq [v_j]= [v_k]$ where $\{i,j,k\}=[3]$, then the value of this map corresponds to the orbit satisfying $[v_i]\neq [v_j]=[v_k]=[v_4]$ whose splitting is $\{\{i\}^1,[4]\setminus\{i\}^1\}\in \mathcal P_{2,4}'$ of the type $\{1^1,3^1\}$. By considering the map permuting indices, we also obtain the orbit with the splitting $\{\{4\}^1,[3]^1\}$ as the values of the map. 

Hence we have shown that the orbits of the splitting type $\{1^1,3^1\}$ are all contained in $\overline{GL_2\cdot v_{p}}$. Furthermore, the orbit with the splitting type $\{4^1\}$ is contained in the closure of those of type $\{1^1,3^1\}\in \mathcal P_{2,4}'$ from Proposition \ref{thm:clsplit}, hence we have the following result: 

\begin{lemma} \label{thm:cl522} 
\begin{enumerate}
\item For $v_{p}=[e_1,e_2,e_1+e_2,p]\in(\mathbb P^3)^4$ where $p\in (\mathbb P^1)'$, the closure of the orbit through $v_p$ is the zero locus of the ideal generated by the polynomial (\ref{eq:522}), and 
\[\overline{GL_4\cdot v_{p}}=GL_4\cdot v_{p}\ \amalg\ \coprod_{i=1}^4\varpi^{-1}\left(\{\{i\}^1,[4]\setminus\{i\}^1\}\right)\ \amalg\ \varpi^{-1}\left(\{[4]^1\}\right). \]
\item In particular, for an orbit $\mathcal O\subset (\mathbb P^3)^5$ with the rank matrix $\varphi[5^2;J]$ of type $(4)$ where $p\in (\mathbb P^1)'$ and $\#J=2$, 
\[\overline{\mathcal O}=\mathcal O\ \amalg\ \varpi^{-1}\left(\{J^1,[5]\setminus J^1\}\right)\ \amalg\ \coprod_{i\notin J}\varpi^{-1}\left(\{\{i\}^1,[5]\setminus\{i\}^1\}\right)\ \amalg\ \varpi^{-1}\left(\{[5]^1\}\right). \]
\end{enumerate}
\end{lemma}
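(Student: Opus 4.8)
The overall plan is to read part (1) directly off the two inclusions prepared in the discussion above, and then to reduce part (2) to part (1) by collapsing the pair of coordinates indexed by $J$. For part (1), the preceding paragraphs already supply both inclusions: $\overline{GL_2\cdot v_p}\subseteq Z(I_p)$ because $v_p$ satisfies the relation (\ref{eq:522}) and this relation is relatively $GL_2$-invariant, and $Z(I_p)\subseteq\overline{GL_2\cdot v_p}$ because the degeneration morphism out of the irreducible space $(\mathbb P^1\times\mathbb P^1\times\mathbb P^1)'$ carries its open dense locus of general-position triples onto the orbit $GL_2\cdot v_p$, so that the image of the whole source lies in $\overline{GL_2\cdot v_p}$, while the three boundary strata of that source (two of the three coordinates coinciding) are carried onto the $\varpi$-fibres of splitting type $\{1^1,3^1\}$ singling out one of the indices $1,2,3$ — the fourth such fibre being reached by the analogous morphism with the coordinates permuted. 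Hence $\overline{GL_2\cdot v_p}=Z(I_p)$, and the case analysis of $Z(I_p)$ carried out above identifies exactly which orbits it contains: among configurations with no vanishing $2$-minor only $GL_2\cdot v_p$ (since $P(v_q)=p_2q_1-p_1q_2$ vanishes only for $q=p$), and among configurations with a vanishing $2$-minor precisely those with at least three proportional columns, i.e. the single orbits $\varpi^{-1}(\{\{i\}^1,[4]\setminus\{i\}^1\})$, $i=1,\dots,4$, together with $\varpi^{-1}(\{[4]^1\})$. This is the asserted decomposition for $GL_2$ acting on $(\mathbb P^1)^4$; pushing it forward along the closed embedding $\tilde\iota\colon GL_2\backslash(\mathbb P^1)^4\hookrightarrow GL_4\backslash(\mathbb P^3)^4$, which is compatible with $\varpi$, $\pi$ and orbit closures, yields part (1) as stated.

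For part (2), I would use that $Z_J:=\{[v_i]_{i=1}^5\in(\mathbb P^3)^5\mid\dim\langle v_i\rangle_{i\in J}=1\}$ is closed and $GL_4$-stable and that collapsing the two coordinates indexed by $J$ into a single coordinate identifies $Z_J$ $GL_4$-equivariantly with $(\mathbb P^3)^4$. Since $\varphi[5^2;J](J)=1$ the orbit $\mathcal O$ lies in $Z_J$, so its closure in $(\mathbb P^3)^5$ agrees with its closure computed in $(\mathbb P^3)^4$ under this identification. There $\mathcal O$ becomes an orbit $GL_4\cdot v_p$ with $p\in(\mathbb P^1)'$ — the collapsed configuration consists of four pairwise distinct points on a line, hence in general position — so part (1) applies; it then remains only to transport the splitting data back across the collapse, where $\{[4]^1\}$ corresponds to $\{[5]^1\}$, the splitting singling out the collapsed index corresponds to $\{J^1,([5]\setminus J)^1\}$, and each splitting singling out one of the three surviving indices corresponds to $\{\{i\}^1,([5]\setminus\{i\})^1\}$ with $i\notin J$. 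This gives the displayed formula.

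The computational core — the explicit degeneration morphism, the evaluation $P(v_q)=p_2q_1-p_1q_2$, and the determination of $Z(I_p)$ — has already been done, so what remains is essentially bookkeeping. The two points that need a little care are confirming that each degenerate stratum occurring in $\overline{GL_2\cdot v_p}$ is a \emph{whole} $\varpi$-fibre rather than just the representative orbit — which holds because the $\{1^1,3^1\}$-type splittings lie in $\mathcal P_{2,4}'$, so their $\varpi$-fibres are single orbits by Fact \ref{thm:splittingdecomp}, while $\varpi^{-1}(\{[4]^1\})$ is swept into the closure by Proposition \ref{thm:clsplit} — and getting the index dictionary in part (2) exactly right; I do not expect either to be a serious obstacle.
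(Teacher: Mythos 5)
Your proposal is correct and follows essentially the same route as the paper: reduce to $GL_2$ acting on $(\mathbb P^1)^4$, bound the closure below by the zero locus of (\ref{eq:522}) via relative invariance, bound it above via the degeneration morphism out of $(\mathbb P^1\times\mathbb P^1\times\mathbb P^1)'$ together with Proposition \ref{thm:clsplit} to absorb the bottom stratum, classify the orbits meeting $Z(I_p)$ by the same two-case analysis, transfer along $\tilde\iota$, and deduce part (2) by collapsing the two coordinates indexed by $J$. The only differences are cosmetic (order of the inclusion arguments, and making explicit the $GL_4$-equivariant identification of the closed set $Z_J$ with $(\mathbb P^3)^4$, which the paper leaves implicit).
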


\noindent
\underline{{\bf Rank type $(3)$: }} 

\bigskip
The last rank matrices with the splitting $\{[5]^2\}$ are of types $(3)$a or $(3)$b. In both cases, the fibre of a rank matrix is a single orbit, and an element $v=[v_i]_{i=1}^5$ of these types has $3$ distinct columns up to constant, and they are in a general position. Hence we only have to consider the closure of the $GL_2$-orbit through $[e_1,e_2,e_1+e_2]\in (\mathbb P^1)^3$, whose splitting is $\{[3]^2\}\in\mathcal P_{2,3}'$. 
Then from Proposition \ref{thm:clsplit}, we have the following result: 
\begin{lemma} \label{thm:cl523} 
For a rank matrix $\varphi$ of type $(3)$, the closure of the orbit $\pi^{-1}(\varphi)$ is $\coprod_{\psi\leq \varphi} \pi^{-1}(\psi)$. 
\end{lemma}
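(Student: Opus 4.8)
The plan is to strip off the repeated columns of a type-$(3)$ configuration, so as to land in $(\mathbb P^3)^3$, where the relevant fibre of $\varpi$ lies in $\mathcal P'_{4,3}$ and Proposition \ref{thm:clsplit} applies directly. A rank matrix $\varphi$ of type $(3)$ — either $\varphi[5^2;J_1,J_2]$ (type $(3)$a) or $\varphi[5^2;J]$ (type $(3)$b) — has exactly three $1$-faces, and these partition $[5]$ into three blocks; let $q\colon[5]\twoheadrightarrow[3]$ be the associated quotient map. From the formulas in Table \ref{tab:parameter1} one reads off $\varphi=q^\ast\varphi_0$, where $\varphi_0\colon 2^{[3]}\to\mathbb N$ is $J\mapsto\min\{\#J,2\}$ and $q^\ast\psi:=\psi\circ q$ denotes pullback of maps along the induced $q\colon 2^{[5]}\to 2^{[3]}$. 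Geometrically, let $\delta_q\colon(\mathbb P^3)^3\to(\mathbb P^3)^5$ be the column-duplication map $[w_k]_{k=1}^3\mapsto[w_{q(i)}]_{i=1}^5$; it is a $GL_4$-equivariant closed immersion, and $\pi(\delta_q(w))=q^\ast(\pi(w))$ for all $w$.

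First I would record the needed identities. Since $\{[3]^2\}$ lies in $\mathcal P'_{4,3}$ (its splitting type $\{3^2\}$ satisfies $2=r=\#I-1$ and $\sum r_k=2\le 4$), the fibre $\varpi^{-1}(\{[3]^2\})\subset(\mathbb P^3)^3$ is a single $GL_4$-orbit, namely $\pi^{-1}(\varphi_0)$ with $\varphi_0=\rho|^{-1}(\{[3]^2\})$. Moreover $\pi^{-1}(\varphi)=\delta_q(\pi^{-1}(\varphi_0))$: a tuple with rank matrix $\varphi$ has its columns within each block of $q$ proportional (because $\varphi$ equals $1$ there), hence lies in the image of $\delta_q$, while conversely $\pi(\delta_q(w))=q^\ast(\pi(w))=q^\ast\varphi_0=\varphi$. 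Since $\delta_q$ is a closed immersion, $\overline{\delta_q(S)}=\delta_q(\overline S)$ for every subset $S$, so $\overline{\pi^{-1}(\varphi)}=\delta_q\bigl(\overline{\varpi^{-1}(\{[3]^2\})}\bigr)$. Applying Proposition \ref{thm:clsplit} to the rank matrix $\varphi_0\in\rho|^{-1}(\mathcal P'_{4,3})$ in $(\mathbb P^3)^3$ yields $\overline{\varpi^{-1}(\{[3]^2\})}=\coprod_{\psi_0\le\varphi_0}\pi^{-1}(\psi_0)$, whence $\overline{\pi^{-1}(\varphi)}=\coprod_{\psi_0\le\varphi_0}\delta_q(\pi^{-1}(\psi_0))$.

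It then remains to rewrite this in the form asserted by the lemma. Every $\psi_0\le\varphi_0$ takes values $\le 2$, so the same proportionality argument gives $\delta_q(\pi^{-1}(\psi_0))=\pi^{-1}(q^\ast\psi_0)$. Finally $\psi_0\mapsto q^\ast\psi_0$ is injective and order preserving, sends each $\psi_0\le\varphi_0$ to some $q^\ast\psi_0\le q^\ast\varphi_0=\varphi$ lying in $\mathrm{Image}(\pi)$, and is onto $\{\psi\in\mathrm{Image}(\pi):\psi\le\varphi\}$: given such a $\psi=\pi(v)$, the inequality $\psi\le\varphi$ forces $\psi$ to equal $1$ on each block of $q$, hence $v=\delta_q(w)$ for some $w$, hence $\psi=q^\ast\pi(w)$ with $\pi(w)\le\varphi_0$ (compare values along a section of $q$). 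Therefore $\overline{\pi^{-1}(\varphi)}=\coprod_{\psi\le\varphi}\pi^{-1}(\psi)$, uniformly for types $(3)$a and $(3)$b. The only steps requiring genuine attention are this last surjectivity — equivalently, the statement that no rank matrix below $\varphi$ separates the duplicated columns — and the transfer of orbit closures through the closed immersion $\delta_q$; the rest is bookkeeping.
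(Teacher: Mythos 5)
Your proof is correct and follows essentially the same route as the paper: observe that a type-$(3)$ configuration has exactly three proportionality classes of columns, reduce via column-duplication to a three-column configuration whose splitting $\{[3]^2\}$ lies in $\mathcal P'_{n,3}$, and invoke Proposition~\ref{thm:clsplit}. The paper's own proof is a two-sentence sketch which passes all the way down to ``$(\mathbb P^1)^3$'' and ``$\mathcal P'_{2,3}$''; your version is tighter in staying at $(\mathbb P^3)^3$ and $\mathcal P'_{4,3}$ (so no separate ambient-dimension reduction is needed), and you make explicit the two points the paper leaves implicit — that the column-duplication map $\delta_q$ is a closed immersion commuting with $\pi$ via $q^\ast$, and that $\psi_0\mapsto q^\ast\psi_0$ is a bijection between $\{\psi_0\le\varphi_0\}$ and $\{\psi\le\varphi\}$ in $\mathrm{Image}(\pi)$ — which is exactly what is needed to turn the paper's reduction into a complete argument.
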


\subsubsection{Orbits of the splitting type $\{1^1,4^2\}$}

From Definition \ref{df:splitting}, splittings of type $\{1^1,4^2\}$ are fulfilled with $\{\{j\}^1,[5]\setminus\{j\}^2\}$ where $j\in[5]$. Furthermore, each splitting corresponds to $2$ types of rank matrices $\varphi[4^2;j]$ of type $(5,5)$, and $\varphi[4^2;j;J]$ of type $(4,4)$b where $j\notin J$ and $\#J=2$, which are listed in Table \ref{tab:parameter1}.  

Now, fix an orbit $GL_4\cdot [v_i]_{i=1}^5\subset (\mathbb P^3)^5$ with the splitting $\{\{j\}^1,[5]\setminus\{j\}^2\}$. Then, the splitting of $[v_i]_{i\neq j}\in(\mathbb P^3)^4$ is $\{[4]^2\}$, and we have 
\[GL_4\cdot [v_i]_{i=1}^5=\left\{[w_i]_{i=1}^5\in(\mathbb P^3)^5\left|\ [w_i]_{i\neq j}\in GL_4\cdot [v_i]_{i\neq j}\ \textrm{and}\ w_j\notin \langle w_i\rangle_{i\neq j}\right.\right\}. \]
Hence, the orbit $GL_4\cdot [v_i]_{i=1}^5$ is open in the irreducible closed set $\overline{GL_4\cdot [v_i]_{i\neq j}}\times \mathbb P^3$. Furthermore, the closure of the orbit $GL_4\cdot [v_i]_{i\neq j}$ of the splitting $\{[4]^2\}$ is already determined in Lemmas \ref{thm:cl522} and \ref{thm:cl523}, hence we have the following result: 
\begin{lemma} \label{thm:cl42} For the orbit $GL_4\cdot [v_i]_{i=1}^5$ with the splitting $\{[5]\setminus\{j\}^2,\{j\}^1\}$ where $j\in [5]$, we have $\overline{GL_4\cdot [v_i]_{i=1}^5}=\overline{GL_4\cdot [v_i]_{i\neq j}}\times \mathbb P^3$. In particular, 
\begin{enumerate}
\item for $v_p=[e_1,e_2,e_1+e_2,p,e_3]$ with the rank matrix $\varphi[4^2;5]$ for some $p\in (\mathbb P^1)'$, we have 
\begin{align*}\overline{GL_4\cdot v_p}=&GL_2\cdot v_p\ \amalg\ \coprod_{q\in\mathbb P^1}GL_4\cdot[e_1,e_2,e_1+e_2,p,q] \\
&\ \amalg\ \coprod_{i=1}^4\varpi^{-1}\left(\{i\}^1,\{5\}^1,[4]\setminus\{i\}^1\right) \ \amalg\ \coprod_{i=1}^4 \pi^{-1}\left(\varphi[5^2;[4]\setminus\{i\}]\right)\\ &\ \amalg\ \coprod_{i=1}^4\varpi^{-1}\left(\{i,j\}^1,[5]\setminus\{i,j\}^1\right) \amalg\ \coprod_{i=1}^5\varpi^{-1}\left(\{i\}^1,[5]\setminus\{i\}^1\right)\ \amalg\ \pi^{-1}\left(\{[5]^1\}\right),
\end{align*}
\item for a rank matrix $\varphi$ of type $(4,4)$b, the closure of the orbit $\pi^{-1}\left(\varphi\right)$ is $\coprod_{\psi\leq\varphi}\pi^{-1}(\psi)$.  
\end{enumerate}
\end{lemma}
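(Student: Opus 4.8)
The plan is to prove the product identity $\overline{GL_4\cdot[v_i]_{i=1}^5}=\overline{GL_4\cdot[v_i]_{i\neq j}}\times\mathbb P^3$ once and for all, and then to read off parts (1) and (2) by feeding the already-computed $4$-point closures of Lemmas \ref{thm:cl522} and \ref{thm:cl523} (together with Proposition \ref{thm:clsplit}) into this identity. For the identity itself, recall the set-theoretic description recorded just above the statement,
\[GL_4\cdot[v_i]_{i=1}^5=\left\{[w_i]_{i=1}^5\in(\mathbb P^3)^5\left|\ [w_i]_{i\neq j}\in GL_4\cdot[v_i]_{i\neq j},\ w_j\notin\langle w_i\rangle_{i\neq j}\right.\right\}.\]
Since an orbit is open in its closure and the condition $w_j\notin\langle w_i\rangle_{i\neq j}$ is open, this exhibits the orbit as a nonempty open subset of $C:=\overline{GL_4\cdot[v_i]_{i\neq j}}\times\mathbb P^3$. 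Now $C$ is closed in $(\mathbb P^3)^5$ and irreducible, being the product of the irreducible variety $\overline{GL_4\cdot[v_i]_{i\neq j}}$ (the closure of a single, hence irreducible, $GL_4$-orbit) with $\mathbb P^3$; a nonempty open subset of an irreducible variety is dense, so $\overline{GL_4\cdot[v_i]_{i=1}^5}=C$.

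For part (1) I would take $j=5$ and $[v_i]_{i\neq 5}=[e_1,e_2,e_1+e_2,p]$ with $p\in(\mathbb P^1)'$. Lemma \ref{thm:cl522}(1) presents $\overline{GL_4\cdot[e_1,e_2,e_1+e_2,p]}\subset(\mathbb P^3)^4$ as the disjoint union of the open orbit, the fibres $\varpi^{-1}(\{\{i\}^1,[4]\setminus\{i\}^1\})$ for $i\in[4]$, and $\varpi^{-1}(\{[4]^1\})$. Crossing each of these with the $\mathbb P^3$ of fifth points and sorting the result into $GL_4$-orbits and $\pi$-fibres of $(\mathbb P^3)^5$ is a finite case analysis on the position of the fifth point $q$ relative to the given $4$-tuple. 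For the open stratum, $q$ off the plane $\langle e_1,e_2\rangle$ recovers the original orbit, while $q$ ranging over $\mathbb P(\langle e_1,e_2\rangle)\cong\mathbb P^1$ gives the $1$-parameter family $GL_4\cdot[e_1,e_2,e_1+e_2,p,q]$. For the stratum in which the three points $[4]\setminus\{i\}$ coincide: $q$ off the spanned plane gives $\varpi^{-1}(\{i\}^1,\{5\}^1,[4]\setminus\{i\}^1)$, $q$ in that plane but off the two marked points gives $\pi^{-1}(\varphi[5^2;[4]\setminus\{i\}])$, and $q$ equal to one of the two marked points gives an orbit with splitting $\{\{i,5\}^1,[5]\setminus\{i,5\}^1\}$ or $\{\{i\}^1,[5]\setminus\{i\}^1\}$. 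For the stratum in which all four coincide: $q$ off the point gives $\varpi^{-1}(\{5\}^1,[4]^1)$ and $q$ on it gives $\pi^{-1}(\{[5]^1\})$. Collecting these, and merging $\coprod_{i=1}^4\varpi^{-1}(\{i\}^1,[5]\setminus\{i\}^1)$ with $\varpi^{-1}(\{5\}^1,[4]^1)$ into $\coprod_{i=1}^5\varpi^{-1}(\{i\}^1,[5]\setminus\{i\}^1)$, reproduces exactly the list in the statement.

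For part (2), if $\varphi$ is of type $(4,4)$b then $\varphi=\varphi[4^2;j;J]$, and the $4$-tuple $[v_i]_{i\neq j}$ has its pair $J$ proportional; identifying the two indices in $J$ turns it into a triple spanning a $2$-plane, that is, a point of $\varpi^{-1}(\{[3]^2\})$ with $\{[3]^2\}\in\mathcal P_{4,3}'$. Proposition \ref{thm:clsplit} therefore gives $\overline{GL_4\cdot[v_i]_{i\neq j}}=\coprod_{\psi_0\leq\varphi|_{[5]\setminus\{j\}}}\pi^{-1}(\psi_0)=\{u\in(\mathbb P^3)^4\mid \dim\langle u_i\rangle_{i\in I}\leq\varphi(I)\text{ for all }I\subseteq[5]\setminus\{j\}\}$. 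Taking the product with the free $\mathbb P^3$ in the $j$-th slot, and observing that for $I\ni j$ one has $\varphi(I)=1+\varphi(I\setminus\{j\})$ so the inequality $\dim\langle w_i\rangle_{i\in I}\leq\varphi(I)$ is automatic once $\dim\langle w_i\rangle_{i\in I\setminus\{j\}}\leq\varphi(I\setminus\{j\})$ holds, one identifies $\overline{GL_4\cdot[v_i]_{i\neq j}}\times\mathbb P^3$ with $\{w\mid\dim\langle w_i\rangle_{i\in I}\leq\varphi(I)\text{ for all }I\}=\coprod_{\psi\leq\varphi}\pi^{-1}(\psi)$, which is the assertion.

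The main obstacle is the bookkeeping in part (1): I must be sure the case analysis on the fifth point is exhaustive and non-overlapping, and in particular that the only infinite families it creates are the $GL_4\cdot[e_1,e_2,e_1+e_2,p,q]$ for $q\in\mathbb P^1$, and that every further degeneration of these is already among the listed $\{1^1,4^1\}$- and $\{5^1\}$-orbits (which follows from Lemmas \ref{thm:cl52}, \ref{thm:cl522}, and \ref{thm:cl523}). Everything past this matching — confirming that each stratum times $\mathbb P^3$ carries the claimed splitting type or rank matrix — is routine once the $4$-point closures are in hand.
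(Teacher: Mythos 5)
Your proposal is correct and follows essentially the same route as the paper: both start from the set-theoretic description of the orbit inside $\overline{GL_4\cdot[v_i]_{i\neq j}}\times\mathbb P^3$, observe it is open dense in that irreducible closed set, and then feed in the $4$-point closures from Lemmas \ref{thm:cl522} and \ref{thm:cl523} (equivalently, Proposition \ref{thm:clsplit} after collapsing the coinciding pair to land in $\mathcal P'_{4,3}$). Your write-up simply spells out the case analysis on the position of the fifth point that the paper leaves implicit; the result is the same list (and you correctly read the paper's stray $GL_2\cdot v_p$ as $GL_4\cdot v_p$ and the free $j$ as $j=5$).
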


\subsubsection{Orbits of the splitting type $\{5^3\}$}
For the unique splitting $\{[5]^3\}\in \mathrm{Image}(\varpi)\setminus\mathcal P_{4,5}'$ of type $\{5^3\}$, there are $4$ types of rank matrices $(5,10)$, $(5,8)$, $(5,6)$ and $(4,6)$, according to Theorem \ref{thm:parameter}. 

\bigskip
\noindent
\underline{{\bf Rank type $(5,10)$: }}

\bigskip
Consider the rank matrix $\varphi[5^3]\colon I\mapsto \min\{\#I,3\}$ of type $(5,10)$. Then an orbit of this type is represented as $GL_4\cdot [e_1,e_2,e_3,e_1+e_2+e_3,p]$ where $p=[p_1e_1+p_2e_2+p_3e_3]\in (\mathbb P^2)'$. 

From now on, for the polynomial ring on $(3\times 5)$-matrices $X$, the notion $|i,j,k|$ denotes the polynomial defined by the $3$-minor of $X$ consisting of the $i,j$, and $k$-th columns, which is homogeneous for each columns. Then we define a homogeneous ideal $I_p$ generated by the following $5$ polynomials. 
\begin{align}
P_1(X)&=p_3|1,2,4||1,5,3|+p_2|1,2,5||1,3,4|,\notag\\ P_2(X)&=p_3|2,1,4||2,5,3|+p_1|2,1,5||2,3,4|,\notag\\ P_3(X)&=p_2|3,1,4||3,5,2|+p_1|3,1,5||3,2,4| ,\label{eq:53}\\
P_4(X)&=(p_2-p_3)|4,1,3||4,5,2|+(p_1-p_3)|4,1,5||4,2,3|,\notag\\ P_5(X)&=p_1(p_2-p_3)|5,1,3||5,4,2|+p_2(p_1-p_3)|5,1,4||5,2,3|\notag
\end{align}
Since these polynomials are relatively invariant under the $GL_3$-action, the ideal $I_p$ defines a $GL_3$-stable closed subset $Z(I_p)$ of $(\mathbb P^2)^5$. Clearly we have $v_p\cdot [e_1,e_2,e_3,e_1+e_2+e_3,p]\in Z(I_p)$, hence $\overline{GL_3\cdot v_p}\subset Z(I_p)$. 
We shall show the converse inclusion. For this purpose, at first we determine the set $Z(I_p)$ according to the classification of orbits. 

\begin{enumerate}
\item First, we consider the case where $v$ is of rank $3$ and all triples of columns are linearly independent, or equivalently, the rank matrix of $v$ is $\varphi[5^3]$. In this case, $v$ is in the orbit $GL_3\cdot v_q$ for some $q=[q_1e_1+q_2e_2+q_3e_3]\in (\mathbb P^2)'$. 
Then we have $P_1(v)=q_2p_3-q_3p_2$, $P_2(v)=q_1p_3-q_3p_1$, $P_3(v)=q_1p_2-q_2p_1$. They all coincides with $0$ if and only if $p=q$. 
Hence in this case, $GL_3\cdot v_p$ with the rank matrix $\varphi[5^3]$ of type $(5,10)$ is the only orbit contained in $Z(I_p)$. 

\item Next, we focus on the case where $v=[v_i]_{i=1}^5$ is of rank $3$, but there exists a vanishing $3$-minor $|i,j,k|$ which is in a general position in a $2$-dimensional space. We can set $v_k=v_i+v_j$ by renormalising the vectors. Now, remark that the $5$ polynomials are of the form  
\begin{align*}
P_i(X)&=c_1|i,j,l||i,m,k|+c_2|i,j,m||i,k,l|\\
&=-c_1|i,j,k||i,l,m|+(c_2-c_1)|i,j,m||i,k,l| \\
&=(c_1-c_2)|i,j,l||i,m,k|-c_2|i,j,k||i,l,m|\end{align*}
for $\{i,j,k,l,m\}=[5]$ and $c_1c_2(c_1-c_2)\neq 0$. Hence if $v\in Z(I_p)$, then $|i,j,k|=0$ leads that either $v_l$ or $v_m$ is in the $2$-dimensional space $\langle v_i,v_j,v_k\rangle=\langle v_i,v_j\rangle$. We may assume it to be $v_l$ without loss of generality, and we can set $v_l=q_iv_i+q_jv_j$. Furthermore, since $v$ is of rank $3$, we have $v_m\notin \langle v_i,v_j,v_k,v_l\rangle=\langle v_i,v_j\rangle$. Under these observations, we have 
\[P_m(v)=c_1|m,i,k||m,l,j|+c_2|m,i,l||m,j,k|=|m,i,j|^2\left(c_1q_i-c_2q_j\right),  \]
which leads that $[v_l]=[c_2v_i+c_1v_j]$. Hence in this case, all orbits contained in $Z(I_p)$ are through either the followings: 
\begin{gather} \label{eq:42}
\begin{split}
\left[\begin{array}{ccccc}0&1&0&1&p_2\\0&0&1&1&p_3\\1&0&0&0&0\end{array}\right],\ \ \ \left[\begin{array}{ccccc}1&0&0&1&p_1\\0&0&1&1&p_3\\0&1&0&0&0\end{array}\right],\ \ \ \left[\begin{array}{ccccc}1&0&0&1&p_1\\0&1&0&1&p_2\\0&0&1&0&0\end{array}\right], \\
\left[\begin{array}{ccccc}1&0&1&0&p_1-p_3\\0&1&1&0&p_2-p_3\\0&0&0&1&0\end{array}\right],\ \ \ \left[\begin{array}{ccccc}1&0&1&p_2(p_1-p_3)&0\\0&1&1&p_1(p_2-p_3)&0\\0&0&0&0&1\end{array}\right]. 
\end{split}
\end{gather} 
Conversely, these orbits are contained in $Z(I_p)$ clearly, because $P_m$ vanishes from the computation above, and the other polynomials automatically vanish since their terms are divided by some $3$-minors contained in $i,j,k,l$-th columns. 
Hence in this case, the orbits in $Z(I_p)$ are fulfilled with ones through the elements in (\ref{eq:42}), whose splitting type is $\{1^1,4^2\}$ and rank matrices are $\varphi[4^2;m]$ for $m\in[5]$ of type $(5,5)$.

\item If $v$ is of rank $3$, and there exists a pair $\{v_i,v_j\}$ which is proportional to each others. Then we can take $[v_i]=[v_j]$, $[v_k]$, $[v_l]$ to be linearly independent, and we have  
\begin{align*}
P_k(v)&=c_1|k,i,j||k,l,m|+c_2|k,i,l||k,m,j|=c|i,k,l||i,k,m|, \\
P_l(v)&=d_1|l,i,j||l,k,m|+d_2|l,i,k||l,m,j|=d|i,k,l||i,l,m|, 
\end{align*}
for some $cd\neq 0$. Hence $v_m\in\langle v_i,v_k\rangle\cap \langle v_i,v_l\rangle=\langle v_i\rangle$, so the vectors $v_i,v_j,v_m$ are proportional to each others. Conversely, if $3$ columns are proportional to each others, then it is contained in $Z(I_p)$ since all terms of each polynomials can be divided by $3$-minors including two of these columns. 
Hence in this case, all orbits in $Z(I_p)$ are fulfiled with the splitting type $\{1^1,1^1,3^1\}$, which corresponds to the rank type $(3,3)$b. 

\item The rest case is where $v$ is of rank less than $3$. Then it is included in $Z(I_p)$ since all $3$-minors vanish. 
\end{enumerate} 
Now, we shall show that $Z(I_p)$ is included in the closure of $GL_3\cdot v_p$. 
\begin{enumerate}
\item Consider the $G$-invariant map 
\[\begin{array}{ccc}
\left(\mathbb P^2\times \mathbb P^2\times\mathbb P^2\times \mathbb P^2\right)' & \to & (\mathbb P^2)^5 \\
\rotatebox{90}{$\in$} && \rotatebox{90}{$\in$} \\
v=[v_i]_{i=1}^4 &\mapsto& \left[v_1,v_2,v_3,v_4,p_1|2,3,4|v_1+p_2|3,1,4|v_2+p_3|1,2,4|v_3\right]
\end{array}\]
where $\left(\mathbb P^2\times \mathbb P^2\times\mathbb P^2\times \mathbb P^2\right)'$ denotes the open dense subset of $(\mathbb P^2)^4$ consisting of all elements of rank $3$. Then the preimage of $GL_3\cdot v_p$ consists of elements such that $4$ vectors are in general positions, which is an open dense subset of $\left(\mathbb P^2\times \mathbb P^2\times\mathbb P^2\times \mathbb P^2\right)'$. Hence the whole image of this map is included in the closure of $GL_2\cdot v_p$. 

Consider the element $v=[e_3,e_1,e_2,e_1+e_2]\in \left(\mathbb P^2\times \mathbb P^2\times\mathbb P^2\times \mathbb P^2\right)'$ of splitting type $\{1^1,3^2\}$, then the value of this map is the first element in (\ref{eq:42}) of type $\{1^1,4^2\}$. By permuting indices of the map and the element $v$, we also obtain other elements in (\ref{eq:42}) as the values of this map similarly. 

On the other hand, consider the element $v=[v_i]_{i=1}^4\in \left(\mathbb P^2\times \mathbb P^2\times\mathbb P^2\times \mathbb P^2\right)'$ such that $[v_i]=[v_j]$ for some $1\leq i\neq j\leq 4$, whose splitting type is $\{1^1,1^1,2^1\}$. Then the value of this map is of the splitting $\{\{i,j,5\}^1,\{k\}^1,\{l\}^1\}$ where $\{i,j,k,l\}=[4]$. By permuting the indices of the map, we also obtain all orbits of splitting type $\{3^1,1^1,1^1\}$. 

\item Define an open dense subset of $(\mathbb P^1)^2$ consisting of all elements $(q,r)\in(\mathbb P^1)^2$ satisfying that the following numbers are not $0$.  
\begin{gather*}
c_1:=\left|r\ \ \left(\begin{array}{cc}p_3&\\p_3-p_2&p_2\end{array}\right)q\right|, \ \ \ c_2:=\left|r\ \ \left(\begin{array}{cc}p_1&p_3-p_1\\&p_3\end{array}\right)q\right|,\\ c_3:=-\left|r\ \ \left(\begin{array}{cc}p_1&\\&p_2\end{array}\right)q\right|, 
\ \ c_4:=\left|r\ \ \begin{array}{c}p_3-p_1\\p_3-p_2\end{array}\right|,\ \ \ c_5:=\left|q\ \ \begin{array}{c}p_2(p_3-p_1)\\p_1(p_3-p_2)\end{array}\right|. 
\end{gather*}
If we set a linear endomorphism on $\mathbb K^3$ by $e_1\mapsto c_1e_1$, $e_2\mapsto c_2e_2$, and $e_3\mapsto c_3(e_1+e_2)$, then we have
\begin{align*}
e_1+e_2+e_3\mapsto & (c_1+c_3)e_1+(c_2+c_3)e_2
=c_4(q_1e_1+q_2e_2) \\
p_1e_1+p_2e_2+p_3e_3\mapsto &(p_1c_1+p_3c_3)e_1+(p_2c_2+p_3c_3)e_2=c_5(r_1e_1+r_2e_2), 
\end{align*}
which leads that $GL_4\cdot [e_1,e_2,e_1+e_2,q,r]$ is included in the closure of $GL_3\cdot [e_1,e_2,e_3,e_1+e_2+e_3,p]$ where $(q,r)$ is in the open dense subset of $(\mathbb P^1)^2$ defined above. Hence same argument holds for any $(q,r)\in (\mathbb P^1)^2$. Furthermore, since $\coprod_{(q,r)\in (\mathbb P^1)^2}GL_4\cdot [e_1,e_2,e_1+e_2,q,r]=\left\{v=[v_i]_{i=1}^5\left|\mathrm{rank}\ v=2,\ [v_1]\neq [v_2]\neq[v_3]\neq[v_1]\right.\right\}$ is an open dense subset of $\left\{v=[v_i]_{i=1}^5\left|\mathrm{rank}\ v\leq 2\right.\right\}$, we have shown that any elements of rank less than $3$ is contained in  the closure of $GL_3\cdot [e_1,e_2,e_3,e_1+e_2+e_3,p]$. 
\end{enumerate} 

\begin{lemma}\label{thm:cl53}
 For the orbit $GL_4\cdot v_p:=GL_4\cdot [e_1,e_2,e_3,e_1+e_2+e_3,p]$ with the rank matrix $\varphi[5^3]$ of type $(5,10)$ where $p\in(\mathbb P^2)'$, the closure of the orbit is the zero locus of the ideal generated by the polynomials in (\ref{eq:53}), and 
\[\overline{GL_4\cdot v_p}=GL_4\cdot v_p\ \amalg\ \coprod_{i=1}^5 GL_4\cdot v_p^{(i)}\ \amalg\ \coprod_{i\neq j}\varpi^{-1}\left(\{\{i\}^1,\{j\}^1,[5]\setminus\{i,j\}^1\right)\ \amalg\ \left\{v\in(\mathbb P^3)^5\left|\ \mathrm{rank}\ v\leq 2\right.\right\}\]
where $v_p^{(i)}\in(\mathbb P^3)^5$ is the element listed in (\ref{eq:42}) of the splitting $\{\{i\}^1,[5]\setminus\{i\}^2\}$ and the rank matrix $\varphi[4^2;i]$ of type $(5,5)$.
\end{lemma}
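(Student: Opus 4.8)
The plan is to establish the equality $\overline{GL_3\cdot v_p}=Z(I_p)$ inside $(\mathbb P^2)^5$ and then push it forward to $(\mathbb P^3)^5$ through the closed embedding $\tilde\iota\colon GL_3\backslash(\mathbb P^2)^5\hookrightarrow GL_4\backslash(\mathbb P^3)^5$ recalled at the start of \S\ref{clparameter}; reading off the orbit decomposition is then bookkeeping against Theorems \ref{thm:split} and \ref{thm:parameter}. One inclusion is immediate: $v_p\in Z(I_p)$ and the generators $P_1,\dots,P_5$ of $I_p$ are relatively $GL_3$-invariant, so $Z(I_p)$ is a $GL_3$-stable closed subset of $(\mathbb P^2)^5$ and hence contains $\overline{GL_3\cdot v_p}$.

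For the identification of $Z(I_p)$ I would invoke the four-case analysis above: every point of $Z(I_p)$ is either (1) a rank-$3$ configuration with all triples independent, necessarily on $GL_3\cdot v_p$; (2) a rank-$3$ configuration with a degenerate triple, forced to be one of the five matrices $v_p^{(i)}$ of (\ref{eq:42}), of splitting $\{\{i\}^1,[5]\setminus\{i\}^2\}$ and rank matrix $\varphi[4^2;i]$; (3) a rank-$3$ configuration with a proportional pair, whose splitting is of type $\{1^1,1^1,3^1\}$, i.e.\ (by Theorem \ref{thm:split}) a full fibre $\varpi^{-1}(\{\{i\}^1,\{j\}^1,[5]\setminus\{i,j\}^1\})$; or (4) a point of the rank-$\le2$ locus $X_{3,5}(2)$.

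For the reverse inclusion $Z(I_p)\subseteq\overline{GL_3\cdot v_p}$ I would use the two explicit families constructed above. First, the $GL_3$-equivariant map from $(\mathbb P^2\times\mathbb P^2\times\mathbb P^2\times\mathbb P^2)'$ to $(\mathbb P^2)^5$ sending $[v_i]_{i=1}^4$ to $[v_1,v_2,v_3,v_4,\,p_1|2,3,4|v_1+p_2|3,1,4|v_2+p_3|1,2,4|v_3]$: the preimage of $GL_3\cdot v_p$ is open dense in an irreducible variety, so the whole image lies in $\overline{GL_3\cdot v_p}$; evaluating at $[e_3,e_1,e_2,e_1+e_2]$ and at its index permutations produces all five $v_p^{(i)}$, and evaluating at configurations with one repeated coordinate produces every fibre $\varpi^{-1}(\{\{i\}^1,\{j\}^1,[5]\setminus\{i,j\}^1\})$. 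Second, the endomorphism of $\mathbb K^3$ built from the scalars $c_1,\dots,c_5$ shows that $GL_3\cdot[e_1,e_2,e_1+e_2,q,r]\subseteq\overline{GL_3\cdot v_p}$ for $(q,r)$ in a nonempty open subset of $(\mathbb P^1)^2$, hence (passing to the limit and using that $X_{3,5}(2)$ is the irreducible closure of the union of these orbits) for all of $X_{3,5}(2)$. Combining the two inclusions gives $\overline{GL_3\cdot v_p}=Z(I_p)$ together with the list of orbits it contains. Finally, since $\tilde\iota$ preserves rank matrices, splittings and orbit closures and every rank-$\le3$ configuration in $(\mathbb P^3)^5$ is $GL_4$-conjugate into $\langle e_1,e_2,e_3\rangle$, we get $\overline{GL_4\cdot v_p}=\tilde\iota(\overline{GL_3\cdot v_p})$, which in the coordinates of the slice $(\mathbb P^2)^5\hookrightarrow(\mathbb P^3)^5$ is cut out by the polynomials of (\ref{eq:53}); rewriting the four families of orbits in $(\mathbb P^3)^5$ yields the displayed decomposition.

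The main obstacle is case (2) of the identification of $Z(I_p)$: showing a rank-$3$ point $v\in Z(I_p)$ with a vanishing $3$-minor $|i,j,k|$ (say $v_k=v_i+v_j$) is completely pinned down. Membership in $Z(I_p)$ first forces one of the remaining vectors, say $v_l$, into $\langle v_i,v_j\rangle$; writing $v_l=q_iv_i+q_jv_j$, the identity $P_m(v)=|m,i,j|^2(c_1q_i-c_2q_j)$ (with $c_1,c_2$ the relevant coefficients of the generator $P_m$) then forces $[v_l]=[c_2v_i+c_1v_j]$, and a case check over which index is the isolated one reproduces exactly the five matrices of (\ref{eq:42}); conversely each of these lies in $Z(I_p)$ because the remaining generators vanish identically on the relevant coordinate subspace. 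Checking that the equivariant maps really sweep out all ten type-$(3,3)$b fibres, and that the $c_i$-nonvanishing locus is nonempty so that every $(q,r)$ is reached in the limit, is the remaining routine verification.
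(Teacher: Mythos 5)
Your proposal follows the paper's own argument step by step: the forward inclusion via relative $GL_3$-invariance of $P_1,\dots,P_5$, the four-case identification of $Z(I_p)$ (with case (2) pinned down exactly by the computation $P_m(v)=|m,i,j|^2(c_1q_i-c_2q_j)$ and the matrices of (\ref{eq:42})), the reverse inclusion via the two equivariant families, and the final transfer to $(\mathbb P^3)^5$ through $\tilde\iota$. This is the same proof the paper gives, so the proposal is correct.
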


\noindent
\underline{{\bf Rank type $(5,8)$: }} 

\bigskip
Next we consider the orbits with the rank matrices $\varphi[5^3;J]$ of type $(5,8)$ where $\#J=3$. For simplicity, we set $J=[3]$. Then an orbit of this type is represented as $GL_4\cdot [e_1,e_2,p,e_3,e_1+e_2+e_3]$ with $p=[p_1e_1+p_2e_2]\in (\mathbb P^1)'$ from Theorem \ref{thm:parameter}. Now, consider the ideal $I_p$ generated by the following $3$ polynomials: 
\begin{align}
P(X)&=|1,2,3| \notag\\
P_4(X)&=p_1|4,1,3||4,5,2|+p_2|4,1,5||4,2,3| \label{eq:533}\\
P_5(X)&=p_1|5,1,3||5,4,2|+p_2|5,1,4||5,2,3|. \notag
\end{align}
It is clear that the closure of the orbit through $[e_1,e_2,p,e_3,e_1+e_2+e_3]$ is included in the $GL_4$-stable closed subset $Z(I_p)$. We shall show the converse inclusion. 

The condition that the first polynomial vanishes for $[v_i]_{i=1}^5\in(\mathbb P^2)^5$ is equivalent to that $\{v_i\}_{i=1}^3$ is linearly dependent, hence we assume it from now on.  

If $v$ is of rank $3$ and $\dim \langle v_1,v_2,v_3\rangle=\dim\langle v_4,v_5\rangle=2$. Then either $v_4$ or $v_5$ is not in $\langle v_1,v_2,v_3\rangle$, hence we set $v_i\notin \langle v_1,v_2,v_3\rangle$ where $\{i,j\}=\{4,5\}$. Consider the decomposition $v_j=w_j+cv_i$ according to the direct sum $\langle v_1,v_2,v_3\rangle\oplus \langle v_i\rangle$. Then we have 
\begin{align*}
P_i(v)&=p_1|i,1,3||i,j,2|+p_2|i,1,j||i,2,3|=d\left(p_1|v_1\ v_3||w_j\ v_2|+p_2|v_1\ w_j||v_2\ v_3|\right) \\
P_j(v)&=p_1|j,1,3||j,i,2|+p_2|j,1,i||j,2,3|=-cd\left(p_1|v_1\ v_3||w_j\ v_2|+p_2|v_1\ w_j||v_2\ v_3|\right) 
\end{align*}
where the determinants in the right-hand side are defined under some basis $\{x,y\}$ of $\langle v_1,v_2,v_3\rangle$ and $d=|v_i\ x\ y|\neq 0$. Hence in this case, $v$ is in $Z(I_p)$ if and only if $[v_1,v_2,w_j,v_3]\in \overline{GL_3\cdot[e_1,e_2,e_1+e_2,p]}$ (see Lemma \ref{thm:cl522}). 
\begin{enumerate}
\item If $[v_1,v_2,w_j,v_3]\in GL_3\cdot[e_1,e_2,e_1+e_2,p]$, then $[v_1,v_2,v_3,v_i,v_j]\in GL_3\cdot [e_1,e_2,p,e_3,e_1+e_2+ce_3]$. If $c\neq 0$, then we have $[v_1,v_2,v_3,v_4,v_5]\in GL_3\cdot [e_1,e_2,p,e_3,e_1+e_2+e_3]$, which is the orbit with the rank matrix $\varphi[5^2;[3]]$ of type $(5,8)$ which we are dealing with. On the other hand, if $c=0$, then $[v_1,v_2,v_3,v_i,v_j]\in GL_3\cdot [e_1,e_2,p,e_3,e_1+e_2]$ with the splitting $\{\{i\}^1,[5]\setminus \{i\}^2\}$ and the rank matrix $\varphi[4^2;i]$ of type $(5,5)$ where $i=4,5$. 
\item Since we assumed that $\dim\langle v_1,v_2,v_3\rangle=2$, the rest case is where $[v_k]\neq [v_l]=[v_m]=[w_j]$ for $[3]=\{k,l,m\}$. Then we have $[v_k,v_l,v_m,v_i,v_j]\in GL_3\cdot [e_1,e_2,e_2,e_3,e_2+ce_3]$. The case $c\neq 0$ corresponds to the splitting $\{\{k\}^1,[5]\setminus\{k\}^2\}$ and the rank matrix $\varphi[4^2;k;\{l,m\}]$ of type $(4,4)$b where $\{k,l,m\}=[3]$. On the other hand, if $c=0$, then $[v_k,v_l,v_m,v_i,v_j]\in GL_3\cdot [e_1,e_2,e_2,e_3,e_2]$ and it is the orbit with the splitting $\{\{k\}^1,\{i\}^1,\{l,m,j\}^1\}$ and the rank type $(3,3)$b where $\{i,j\}=\{4,5\}$. 
\end{enumerate}
Remark that if $v$ is either of rank $2$, $\dim\langle v_4,v_5\rangle =1$, or $\dim\langle v_1,v_2,v_3\rangle =1$ then the $3$ polynomials vanish clearly. 
In particular, the case $[v_4]=[v_5]$ corresponds to the splittings $\{\{4,5\}^1,[3]^2\}$ of rank type $(4,4)$a, and $\{\{4,5\}^1,\{k,l\}^1,\{m\}^1\}$ of type $(3,3)$a where $\{k,l,m\}=[3]$. The case $[v_1]=[v_2]=[v_3]$ corresponds to the splitting $\{[3]^1,\{4\}^1,\{5\}^1\}$ of type $(3,3)$b. 
According to these arguments, we shall show that $Z(I_p)$ is included in the closure of the orbit $GL_4\cdot [e_1,e_2,p,e_3,e_1+e_2+e_3]$. 
\begin{enumerate}
\item Consider the map 
\[\begin{array}{ccc}
\left(\mathbb P^2\times\mathbb P^2\times\mathbb P^2\times\mathbb P^2\right)' &\to & \left(\mathbb P^2\right)^5 \\
\rotatebox{90}{$\in$} &&\rotatebox{90}{$\in$} \\
\left[v_1,v_2,v_4,v_5\right] &\mapsto &[v_1,v_2,p_1|2,4,5|v_1+p_2|4,1,5|v_2,v_4,v_5]
\end{array}\]
where $\left(\mathbb P^2\times\mathbb P^2\times\mathbb P^2\times\mathbb P^2\right)' $ denotes the open dense subset of $\left(\mathbb P^2\right)^4$ consisting of all elements of rank $3$ and $[v_4]\neq [v_5]$. The preimage of the orbit $GL_4\cdot [e_1,e_2,p,e_3,e_1+e_2+e_3]$ is the open dense subset of $\left(\mathbb P^2\times\mathbb P^2\times\mathbb P^2\times\mathbb P^2\right)' $ consisting of all elements such that $\{v_1,v_2,v_4,v_5\}$ are in general positions in $3$-dimensional spaces. Hence the whole image of this map is included in the closure of the orbit. 

The correspondence between $4$-tuples of vectors and the value of the map is given as follows by a simple computation. 
\begin{table}[h]
\centering
\begin{tabular}{c|c}
$\{v_1,v_2,v_j\}$: general where $\{i,j\}=\{4,5\}$ & rank matrix $\varphi[4^2;i]$ \\\hline 
$\{v_l,v_4,v_5\}$: general where $\{k,l\}=\{1,2\}$ & rank matrix $\varphi[4^2;k;\{l,3\}]$ \\\hline
$[v_1]=[v_2]$ & splitting $\{\{4\}^1,\{5\}^1,[3]^1\}$ \\\hline
$[v_l]=[v_j]$ where $\{k,l\}=\{1,2\}$, $\{i,j\}=\{4,5\}$ & splitting $\{\{k\}^1,\{i\}^1,\{l,3,j\}^1\}$
\end{tabular}
\end{table}
By permuting indices $\{1,2,3\}$ of the map, we can also obtain the orbits with the rank matrices $\varphi[4^2;k;\{l,m\}]$ and splittings $\{\{k\}^1,\{i\}^1,\{l,m,j\}^1\}$ for $\{k,l,m\}=[3]$ and $\{i,j\}=\{4,5\}$. 

\item The rest cases where $v$ is of rank $3$ are the splittings $\{[3]^2,\{4,5\}^1\}$ and $\{\{k\}^1,\{l,m\}^1,\{4,5\}^1\}$ where $\{k,l,m\}=[3]$. 
Remark that $[e_1,e_2,p,e_3,c(e_1+e_2)+e_3]$ is in the orbit $GL_4\cdot[e_1,e_2,p,e_3,e_1+e_2+e_3]$ where $c\neq 0$. Hence the orbit through $[e_1,e_2,p,e_3,e_3]$, whose splitting is $\{[3]^2,\{4,5\}^1\}$, is included in the closure. 
Furthermore, since the orbit with the splitting type $\{1^1,2^1\}$ is included in the closure of the orbit with the splitting type $\{3^2\}\in\mathcal P_{2,3}'$ from Proposition \ref{thm:clsplit}, the orbit with the splitting $\{\{k\}^1,\{l,m\}^1,\{4,5\}^1\}$ is also contained in the closure. 

\item Let us consider an open dense subset of $(\mathbb P^1)^2$ consisting of all elements $(q,r)$ satisfying $\tilde p\neq q\neq r\neq \tilde p$ where $\tilde p=[p_2e_1+p_1e_2]$. Consider the linear map sending $e_1\mapsto p_2|r,q|e_1$, $e_2\mapsto p_1|r,q|e_2$, $e_3\mapsto|\tilde p,r|q$, then it sends the vectors $p\mapsto p_1p_2|r,q|(e_1+e_2)$ and $e_1+e_2+e_3\mapsto |\tilde p,q|r$, which leads that the orbit through $[e_1,e_2,e_1+e_2,q,r]$ is included in the closure of $GL_4\cdot[e_1,e_2,
p,e_3,e_1+e_2+e_3]$ if $(q,r)$ is in this open dense subset of $(\mathbb P^1)^2$. Hence same argument holds for any $(q,r)\in (\mathbb P^1)^2$, and all elements $v\in(\mathbb P^2)^5$ with rank less than $3$ are clearly contained in the closure. 
\end{enumerate}

\begin{lemma} \label{thm:cl533}
For an orbit $GL_4\cdot v_p=GL_4\cdot[e_1,e_2,p,e_3,e_1+e_2+e_3]$ with the rank matrix $\varphi[5^3;[3]]$ of type $(5,8)$ where $p\in (\mathbb P^1)'$, the closure of the orbit is the zero locus the ideal generated by the polynomials in (\ref{eq:533}), and
\begin{align*}
\overline{GL_4\cdot v_p}&=GL_4\cdot v_p\ \amalg\ \coprod_{i=4,5}GL_4\cdot v^{(i)}_p\ \amalg\ \coprod_{\{k,l,m\}=[3]}\pi^{-1}\left(\varphi[4^2;k;\{l,m\}]\right)\ \amalg\ \varpi^{-1}\left(\{[3]^2,\{4,5\}^1\}\right)\\
&\ \ \ \amalg\ \coprod_{\{i,j\}\nsubseteq [3]}\varpi^{-1}\left(\{\{i\}^1,\{j\}^1,[5]\setminus\{i,j\}^1\}\right)\ \amalg\ \coprod_{\{k,l,m\}=[3]}\varpi^{-1}\left(\{\{k\}^1,\{l,m\}^1,\{4,5\}^1\}\right) \\
&\ \ \ \amalg\ \left\{v\in (\mathbb P^3)^5\left|\ \mathrm{rank}\ v\leq 2\right.\right\}.  
\end{align*}
where $v_p^{(4)}=[e_1,e_2,p,e_3,e_1+e_2]$ and $v_p^{(5)}=[e_1,e_2,p,e_1+e_2,e_3]$ with the spltting $\{\{i\}^1,[5]\setminus\{i\}^2\}$ and the rank matrix $\varphi[4^2;i]$ of type $(5,5)$. 
\end{lemma}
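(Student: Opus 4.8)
The argument will proceed by the two-inclusion scheme laid out in the discussion preceding the statement, so the plan is to assemble that discussion into a clean proof. Since every column of $v_p=[e_1,e_2,p,e_3,e_1+e_2+e_3]$ lies in $\langle e_1,e_2,e_3\rangle$, I would first pass through the closure-preserving closed embedding $\tilde\iota\colon GL_3\backslash(\mathbb P^2)^5\hookrightarrow GL_4\backslash(\mathbb P^3)^5$ from the start of Section \ref{clparameter}, reducing the claim to the corresponding statement about $GL_3$ acting on $(\mathbb P^2)^5$, where the ideal $I_p$ makes literal sense; this embedding respects splittings and rank matrices, so the orbit list transports back verbatim. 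For the inclusion $\overline{GL_3\cdot v_p}\subseteq Z(I_p)$ there is nothing to do: the generators $P=|1,2,3|$, $P_4$, $P_5$ in (\ref{eq:533}) are relatively $GL_3$-invariant and vanish at $v_p$, hence on the whole orbit and its closure.

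The heart of the argument is the determination of $Z(I_p)$ as a set together with the reverse inclusion. For the set-theoretic part I would stratify by configuration type. The vanishing of $|1,2,3|$ forces $\{v_1,v_2,v_3\}$ linearly dependent; the nontrivial stratum is $\mathrm{rank}\,v=3$ with $\dim\langle v_1,v_2,v_3\rangle=\dim\langle v_4,v_5\rangle=2$. Here, writing $\{i,j\}=\{4,5\}$ with $v_i\notin\langle v_1,v_2,v_3\rangle$ and decomposing $v_j=w_j+cv_i$ along $\langle v_1,v_2,v_3\rangle\oplus\langle v_i\rangle$, a direct computation collapses $P_i$ and $P_j$ (up to the nonzero scalar $|v_i\ x\ y|$ and the factor $c$) to the single polynomial of Lemma \ref{thm:cl522} evaluated at $[v_1,v_2,w_j,v_3]$. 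Thus $v\in Z(I_p)$ in this stratum exactly when $[v_1,v_2,w_j,v_3]\in\overline{GL_3\cdot[e_1,e_2,e_1+e_2,p]}$, and peeling off the sub-degenerations recorded in Lemma \ref{thm:cl522} — whether $c=0$ or $c\neq0$, and whether $\langle v_1,v_2,v_3\rangle$ carries three distinct points or a doubled one — produces precisely the orbits $GL_4\cdot v_p$, the $(5,5)$-orbits $GL_4\cdot v_p^{(4)}$ and $GL_4\cdot v_p^{(5)}$, the $(4,4)$b-orbits $\pi^{-1}(\varphi[4^2;k;\{l,m\}])$, and the relevant $(3,3)$b-orbits. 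The remaining strata ($\mathrm{rank}\,v\le2$, or $\dim\langle v_4,v_5\rangle=1$, or $\dim\langle v_1,v_2,v_3\rangle=1$) lie in $Z(I_p)$ automatically since all $3$-minors vanish there, and they account for $\varpi^{-1}(\{[3]^2,\{4,5\}^1\})$, the $(3,3)$a-orbits, and the whole rank-$\le2$ locus.

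For $Z(I_p)\subseteq\overline{GL_3\cdot v_p}$ I would realise each boundary orbit as a limit of an explicit family. The map sending $[v_1,v_2,v_4,v_5]$ (generic rank $3$, $[v_4]\neq[v_5]$) to $[v_1,v_2,p_1|2,4,5|v_1+p_2|4,1,5|v_2,v_4,v_5]$ has generic image in $GL_3\cdot v_p$, so its whole image lies in the closure; its degenerations $[v_1]=[v_2]$, $[v_l]=[v_j]$, etc., realise the $(5,5)$, $(4,4)$b, $(3,3)$b and $(3,3)$a orbits, and permuting $\{1,2,3\}$ covers all of them. That $[e_1,e_2,p,e_3,c(e_1+e_2)+e_3]\in GL_3\cdot v_p$ for $c\neq0$ gives $\varpi^{-1}(\{[3]^2,\{4,5\}^1\})$ in the limit $c\to0$, with its $(3,3)$a subordinate following from Proposition \ref{thm:clsplit}. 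Finally, the linear substitution $e_1\mapsto p_2|r,q|e_1$, $e_2\mapsto p_1|r,q|e_2$, $e_3\mapsto|\tilde p,r|q$ with $\tilde p=[p_2e_1+p_1e_2]$ carries $v_p$ to $[e_1,e_2,e_1+e_2,q,r]$ up to scalars for generic $(q,r)$, so $GL_4\cdot[e_1,e_2,e_1+e_2,q,r]\subseteq\overline{GL_3\cdot v_p}$ for all $(q,r)\in(\mathbb P^1)^2$ by taking closures, hence all of $\{v:\mathrm{rank}\,v\le2\}$. The two inclusions give $\overline{GL_4\cdot v_p}=Z(I_p)$, and the orbit decomposition is the stratification found above. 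I expect the main obstacle to be the bookkeeping in the rank-$3$ stratum: reducing $P_4,P_5$ to the $GL_2$-invariant of Lemma \ref{thm:cl522} and then matching each sub-degeneration — the $c=0$ versus $c\neq0$ split, and the location of the repeated point among the indices $1,2,3$ — to the correct orbit label, keeping the $(4,4)$a/$(4,4)$b and $(3,3)$a/$(3,3)$b distinctions straight throughout.
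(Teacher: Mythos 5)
Your proposal is essentially correct and reproduces the paper's own two-inclusion argument: the stratification of $Z(I_p)$ by rank of $v$ and of the blocks $\langle v_1,v_2,v_3\rangle$, $\langle v_4,v_5\rangle$, the reduction of $P_4,P_5$ to the $GL_2$-invariant of Lemma~\ref{thm:cl522} via the decomposition $v_j=w_j+cv_i$, the degeneration map on $[v_1,v_2,v_4,v_5]$, the limit $c\to 0$ of $[e_1,e_2,p,e_3,c(e_1+e_2)+e_3]$ for the $(4,4)$a stratum, and the linear substitution handling the full rank-$\le 2$ locus. The only slip is attributing the $(3,3)$a orbits $\varpi^{-1}(\{\{k\}^1,\{l,m\}^1,\{4,5\}^1\})$ to degenerations of the first map: that map's domain has $[v_4]\neq[v_5]$ so it cannot reach them; in the paper (as you in fact note in your very next sentence) they are obtained as a subordinate of $\varpi^{-1}(\{[3]^2,\{4,5\}^1\})$ via Proposition~\ref{thm:clsplit}, so the redundancy does not damage the argument.
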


\noindent
\underline{{\bf Rank type $(5,6)$: }}

\bigskip
Consider the rank matrix $\varphi[5^3;J_1,J_2]$ of type $(5,6)$ where $\#J_1=\#J_2=2$ and $J_1\cap J_2=\emptyset$. Recall that the fibre $\pi^{-1}\left(\varphi[5^3;J_1,J_2]\right)$ is a single orbit, and 
\[\varphi[5^3;J_1,J_2]\colon I\mapsto \begin{cases}\min\{\#I,3\} & I\neq [5]\setminus J_1,\ [5]\setminus J_2 \\ 2 & I= [5]\setminus J_1,\ [5]\setminus J_2.\end{cases}\]
Hence the orbit $\pi^{-1}\left(\varphi[5^3;J_1,J_2]\right)$ is a open subset of the closed set
\begin{equation} \label{eq:5333}
\coprod_{\varphi\leq\varphi[5^3;J_1,J_2]}\pi^{-1}(\varphi)=\left\{[v_i]_{i=1}^5\in(\mathbb P^3)^5\left|\ \dim\langle v_i\rangle_{i\notin J_1},\dim\langle v_i\rangle_{i\notin J_2}\leq 2,\textrm{ and }\dim\langle v_i\rangle_{i=1}^5\leq 3\right.\right\}. \end{equation}
Consider the following map 
\[M(4,3)'\times (\mathbb P^1)^4\to(\mathbb P^3)^5,\ \left((x\ y\ z),[p_i]_{i=2}^5\right)\mapsto \left[x,(x\ y)p_2,(x\ y)p_3,(x\ z)p_4,(x\ z)p_5\right] \]
where $M(4,3)'$ denotes the open dense subset of $M(4,3)$ consisting of all $4\times 3$-matrices $(x\ y\ z)$ such that all columns are not $\bm 0$ and $[x]\neq [y],[z]$. It is clear that the image of this map is included in the set (\ref{eq:5333}) where $J_1=\{2,3\}$ and $J_2=\{4,5\}$. In fact, the converse also holds. Indeed, let $v=[v_i]_{i=1}^5$ be in the set (\ref{eq:5333}), set $x:=v_1$.   
\begin{enumerate}
\item If $[v_1]=[v_2]=[v_3]$, then take some $[y]\neq [x]$ and $[p_2],[p_3]:=[e_1]$. 
\item If $[v_1]\neq [v_2]$, then set $y:=v_2$ and $p_2:=[e_2]$. Furthermore, set $p_3$ according to the linear combination $v_3=p_{31}x+p_{32}y$. The case where $[v_1]\neq[v_3]$ can be similarly considered. 
\end{enumerate}
For $\{v_3,v_4\}$, we define similarly, then we obtain $v$ as a value of the map. Hence the closed set (\ref{eq:5333}) is irreducible, and we obtain the following: 
\begin{lemma}\label{thm:5333} For the orbit $\pi^{-1}(\varphi[5^3;J_1,J_2])$ of rank type $(5,6)$ where $\#J_1=\#J_2=2$ and $J_1\cap J_2\neq \emptyset$, we have
$\overline{\pi^{-1}(\varphi[5^3;J_1,J_2])}=\coprod_{\varphi\leq \varphi[5^3;J_1,J_2]}\pi^{-1}(\varphi).$
\end{lemma}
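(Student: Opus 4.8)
This is one more instance of the scheme used throughout Section~\ref{clparameter}: exhibit the orbit $\mathcal O:=\pi^{-1}(\varphi)$, with $\varphi:=\varphi[5^3;J_1,J_2]$, as a dense open subset of the closed rank-drop set $Z:=\coprod_{\psi\leq\varphi}\pi^{-1}(\psi)$, and conclude $\overline{\mathcal O}=Z$. Two of the three ingredients are already on record: $\mathcal O$ is open in $Z$ by the minor description at the start of Section~\ref{clsplit}, and $\mathcal O\neq\emptyset$ since it contains the representative in Table~\ref{tab:parameter2} (e.g.\ $[e_1,e_2,e_3,e_1+e_2+e_3,e_1+e_2]$ for $\varphi[5^3;\{1,2\},\{3,4\}]$, the general case following by permuting indices). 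By the remark in Section~\ref{clsplit} preceding Proposition~\ref{thm:clsplit}, it therefore only remains to prove that $Z$ is irreducible. (Here the hypothesis should be read as $J_1\cap J_2=\emptyset$: this is the condition under which $\varphi[5^3;J_1,J_2]$ is defined as a rank matrix of type $(5,6)$, cf.\ Table~\ref{tab:parameter1}; the printed $J_1\cap J_2\neq\emptyset$ is a misprint, and disjointness is used below.)

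\textbf{Describing $Z$.} After relabelling we may take $J_1=\{2,3\}$ and $J_2=\{4,5\}$, so that by (\ref{eq:5333}) the set $Z$ consists of those $[v_i]_{i=1}^5\in(\mathbb P^3)^5$ with $\dim\langle v_i\rangle_{i\in\{1,2,3\}}\leq 2$ and $\dim\langle v_i\rangle_{i\in\{1,4,5\}}\leq 2$; the further condition $\dim\langle v_i\rangle_{i=1}^5\leq 3$ in (\ref{eq:5333}) is then automatic, since the two planes $\langle v_i\rangle_{i\in\{1,2,3\}}$ and $\langle v_i\rangle_{i\in\{1,4,5\}}$ both contain the nonzero vector $v_1$, so their sum has dimension at most $3$, and this sum contains all five vectors because $\{1,2,3\}\cup\{1,4,5\}=[5]$ — which is exactly where $J_1\cap J_2=\emptyset$ enters. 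To prove $Z$ irreducible I would use the morphism
\[\Phi\colon M(4,3)'\times(\mathbb P^1)^4\longrightarrow(\mathbb P^3)^5,\qquad\bigl((x\ y\ z),[p_i]_{i=2}^5\bigr)\longmapsto\bigl[x,\,(x\ y)p_2,\,(x\ y)p_3,\,(x\ z)p_4,\,(x\ z)p_5\bigr]\]
introduced above the statement: its source is an open subset of the irreducible variety $M(4,3)\times(\mathbb P^1)^4$, hence irreducible, so it suffices to show $\mathrm{Image}(\Phi)=Z$.

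\textbf{Surjectivity of $\Phi$, and the obstacle.} The inclusion $\mathrm{Image}(\Phi)\subseteq Z$ is immediate. For $Z\subseteq\mathrm{Image}(\Phi)$, given $v=[v_i]_{i=1}^5\in Z$ I would put $x:=v_1$ and reconstruct a preimage: to produce $(y,p_2,p_3)$, if $v_1,v_2,v_3$ are all proportional choose any $y$ with $[y]\neq[x]$ and set $p_2=p_3:=[e_1]$, while otherwise one of $v_2,v_3$, say $v_2$, is not proportional to $v_1$, so set $y:=v_2$, $p_2:=[e_2]$ and $p_3:=[p_{31}e_1+p_{32}e_2]$ with $v_3=p_{31}v_1+p_{32}v_2$, which is solvable since $\langle v_1,v_2,v_3\rangle=\langle v_1,v_2\rangle$ is then $2$-dimensional; produce $(z,p_4,p_5)$ from $\langle v_i\rangle_{i\in\{1,4,5\}}$ in the same way. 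In every case $x,y,z$ are nonzero with $[x]\neq[y],[z]$, so $(x\ y\ z)\in M(4,3)'$ and $\Phi$ sends the resulting point to $v$. Hence $Z=\mathrm{Image}(\Phi)$ is irreducible, $\mathcal O$ is dense open in it, and $\overline{\mathcal O}=Z=\coprod_{\psi\leq\varphi}\pi^{-1}(\psi)$. The only step requiring genuine care is this surjectivity check: one must verify that the degenerate configurations in $Z$ — where several of the five points collapse onto $v_1$ or onto a single line, so that $y$ (resp.\ $z$) is no longer determined by $v_2,v_3$ (resp.\ $v_4,v_5$) — can still be lifted while keeping the matrix $(x\ y\ z)$ inside $M(4,3)'$; everything else is routine and already recorded in Section~\ref{clsplit} and Table~\ref{tab:parameter2}.
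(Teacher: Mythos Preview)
Your proposal is correct and follows the paper's own argument essentially verbatim: the same map $\Phi$ from $M(4,3)'\times(\mathbb P^1)^4$ with the same case-by-case surjectivity check (choose $y$ freely when $[v_1]=[v_2]=[v_3]$, else take $y=v_2$ or $v_3$, and symmetrically for $z$), yielding irreducibility of $Z$. Your observation that the condition $\dim\langle v_i\rangle_{i=1}^5\leq 3$ in (\ref{eq:5333}) is redundant once the two plane conditions hold, and your flagging of $J_1\cap J_2\neq\emptyset$ as a misprint for $J_1\cap J_2=\emptyset$, are both correct and make explicit points the paper leaves implicit.
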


\noindent
\underline{{\bf Rank type $(4,6)$: }}

\bigskip
The last rank matrices of the splitting $\{[5]^3\}$ are $\varphi[5^3;J]$ of the type $(4,6)$ where $\#J=2$. Recall that $\pi^{-1}(\varphi[5^3;J])$ are single orbits. 

Remark that $v=[v_i]_{i=1}^5$ is in this orbit if and only if the two columns corresponding to $J$ are proportional to each others, and the rest $4$ vectors are in a general position. Hence, it is equivalent to consider the orbit of the splitting $\{[4]^3\}\in\mathcal P_{4,4}'$ in $(\mathbb P^3)^4$. From Proposition \ref{thm:clsplit}, we have the following. 
\begin{lemma}\label{thm:cl532} For orbits $\pi^{-1}(\varphi[5^3;J])$ of rank type $(4,6)$ where $\#J=2$, we have 
\[\overline{\pi^{-1}(\varphi[5^3;J])}=\coprod_{\varphi\leq \varphi[5^3;J]}\pi^{-1}(\varphi).\] 
\end{lemma}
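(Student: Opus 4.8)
The plan is to reduce the statement to the four-point problem in $(\mathbb P^3)^4$ and then quote Proposition \ref{thm:clsplit}. By the $\mathcal S_5$-equivariance of $\pi$ we may assume $J=\{4,5\}$. First I would introduce the locus $\Delta_J\subset(\mathbb P^3)^5$ where the two entries indexed by $J$ coincide, i.e.\ $\Delta_J=\{[v_i]_{i=1}^5\mid\dim\langle v_i\rangle_{i\in J}=1\}$: this is closed (all $2$-minors of the submatrix $(v_4\ v_5)$ vanish), it is $GL_4$-stable, and deleting the repeated coordinate gives a $GL_4$-equivariant isomorphism $\Delta_J\overset{\sim}{\to}(\mathbb P^3)^4$ with inverse $[w_i]_{i=1}^4\mapsto[w_1,w_2,w_3,w_4,w_4]$. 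I would then record the compatibility with rank matrices: writing $c\colon 2^{[5]}\to 2^{[4]}$ for the surjection sending $I$ to $I$ if $5\notin I$ and to $(I\setminus\{5\})\cup\{4\}$ otherwise, a point $v\in\Delta_J$ with image $w\in(\mathbb P^3)^4$ satisfies $\pi(v)=\pi(w)\circ c$, and moreover $\varphi[5^3;J]=\varphi_0\circ c$ where $\varphi_0\colon K\mapsto\min\{\#K,3\}$ is the rank matrix on $2^{[4]}$ of a four-tuple in general position (using $\#(I/J)=\#c(I)$).

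Next I would identify the orbit. Since $\varphi[5^3;J](J)=1$ we have $\pi^{-1}(\varphi[5^3;J])\subset\Delta_J$, and under the isomorphism above (and surjectivity of $c$) it corresponds exactly to $\{w\in(\mathbb P^3)^4\mid\pi(w)=\varphi_0\}=\varpi^{-1}(\{[4]^3\})$, the locus of four-tuples of rank $3$ with all triples linearly independent. Because $\{[4]^3\}\in\mathcal P_{4,4}'$ (here $3=\#[4]-1$), Fact \ref{thm:splittingdecomp} shows this is a single $GL_4$-orbit and Proposition \ref{thm:rhobij} gives $\rho|^{-1}(\{[4]^3\})=\varphi_0$; hence Proposition \ref{thm:clsplit}, applied with $(n,m)=(4,4)$, yields $\overline{\varpi^{-1}(\{[4]^3\})}=\coprod_{\psi\leq\varphi_0}\pi^{-1}(\psi)$ inside $(\mathbb P^3)^4$.

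Finally I would transport this back. As $\Delta_J$ is closed in $(\mathbb P^3)^5$ and $\pi^{-1}(\varphi[5^3;J])\subset\Delta_J$, the closure of $\pi^{-1}(\varphi[5^3;J])$ in $(\mathbb P^3)^5$ equals its closure in $\Delta_J$, which the isomorphism carries onto $\overline{\varpi^{-1}(\{[4]^3\})}$. Under the isomorphism, $\pi^{-1}(\psi)\subset(\mathbb P^3)^4$ corresponds to $\pi^{-1}(\psi\circ c)\cap\Delta_J$; since $(\psi\circ c)(J)=\psi(\{4\})=1$ one has $\pi^{-1}(\psi\circ c)\subset\Delta_J$, so this is simply $\pi^{-1}(\psi\circ c)$. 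It then remains to check that $\psi\mapsto\psi\circ c$ is a bijection from $\{\psi\in\mathrm{Image}(\pi)\mid\psi\leq\varphi_0\}$ onto $\{\varphi\in\mathrm{Image}(\pi)\mid\varphi\leq\varphi[5^3;J]\}$: forward, $\psi\leq\varphi_0$ implies $\psi\circ c\leq\varphi_0\circ c=\varphi[5^3;J]$; conversely, any $\varphi\leq\varphi[5^3;J]$ satisfies $1\leq\varphi(J)\leq\varphi[5^3;J](J)=1$, so $\pi^{-1}(\varphi)\subset\Delta_J$ and $\varphi=\psi\circ c$ for the rank matrix $\psi$ of its image in $(\mathbb P^3)^4$, with $\psi\leq\varphi_0$ again by surjectivity of $c$. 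Combining these gives $\overline{\pi^{-1}(\varphi[5^3;J])}=\coprod_{\varphi\leq\varphi[5^3;J]}\pi^{-1}(\varphi)$. The argument is largely bookkeeping once Proposition \ref{thm:clsplit} is in hand; the one step needing care is precisely this last translation of index sets and partial orders through the collapsing map $c$, together with the (standard) remark that closures taken inside the closed subvariety $\Delta_J$ agree with those in $(\mathbb P^3)^5$ — neither is difficult, but both must be spelled out for the displayed disjoint union to be exact.
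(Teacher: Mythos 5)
Your proof is correct and takes essentially the same approach as the paper: the paper simply observes that configurations with rank matrix $\varphi[5^3;J]$ are those with the two $J$-entries identified and the remaining four vectors in general position, reduces to the closure of the open orbit $\varpi^{-1}(\{[4]^3\})$ in $(\mathbb P^3)^4$, and invokes Proposition~\ref{thm:clsplit}. You spell out the bookkeeping (the closed subvariety $\Delta_J\simeq(\mathbb P^3)^4$, the collapsing map $c$ and the identity $\varphi[5^3;J]=\varphi_0\circ c$, and the bijection of rank matrices below the respective top elements) that the paper leaves implicit, but the argument is the same.
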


\subsection{Combinatorial criterion for closure relations}
In subsections \ref{clsplit}, \ref{clparameter}, we observed the following: 
\begin{thm} \label{thm:clsingle}
If $\pi^{-1}(\varphi)\subset(\mathbb P^3)^5$ is a single $GL_4$-orbit for $\varphi\in\mathrm{Image}(\pi)\subset Map(2^{[5]},\mathbb N)$, then $\overline{\pi^{-1}(\varphi)}=\coprod_{\psi\leq \varphi}\pi^{-1}(\psi)$. 
\end{thm}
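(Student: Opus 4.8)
The plan is to deduce the statement from the case analyses already carried out in Subsections \ref{clsplit} and \ref{clparameter}, together with the bound established at the opening of Subsection \ref{clsplit}. Recall that for \emph{every} $\varphi\in\mathrm{Image}(\pi)$ one has $\overline{\pi^{-1}(\varphi)}\subset\coprod_{\psi\leq\varphi}\pi^{-1}(\psi)$, since $\pi^{-1}(\varphi)$ is cut out inside $(\mathbb P^3)^5$ by the vanishing of all $(\varphi(I)+1)$-minors of $(v_i)_{i\in I}$ together with the non-vanishing of some $\varphi(I)$-minor, for each $I\subset[5]$, while the right-hand side is the closed locus on which only the vanishing conditions are imposed; moreover $\pi^{-1}(\varphi)$ is open in that closed locus. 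So the content of the theorem is the reverse inclusion, i.e. that $\pi^{-1}(\varphi)$ is dense in $\coprod_{\psi\leq\varphi}\pi^{-1}(\psi)$, which follows as soon as that closed set is shown to be irreducible — and this is precisely what the relevant lemmas supply in each case.

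The second step is to enumerate the $\varphi\in\mathrm{Image}(\pi)$ whose fibre is a single orbit. By Theorems \ref{thm:split} and \ref{thm:parameter} (equivalently, by the classification recorded in Theorem \ref{thm:orbit}), the fibres that are \emph{not} single orbits are exactly those with rank matrix $\varphi[5^2]$, $\varphi[5^2;J]$ with $\#J=2$, $\varphi[4^2;i]$, $\varphi[5^3;J]$ with $\#J=3$, or $\varphi[5^3]$; every other $\varphi\in\mathrm{Image}(\pi)$ has $\pi^{-1}(\varphi)$ a single orbit. These remaining $\varphi$ split into the following groups, according to their splitting type (Lemma \ref{thm:splittingtype}) and the corresponding rank matrices in Tables \ref{tab:split} and \ref{tab:parameter1}: (i) those whose splitting lies in $\mathcal P_{4,5}'$, i.e. $\varphi\in\rho|^{-1}(\mathcal P_{4,5}')$; (ii) those of type $(3)$a or $(3)$b (splitting $\{5^2\}$); (iii) those of type $(4,4)$b (splitting $\{1^1,4^2\}$); (iv) those of type $(5,6)$ (splitting $\{5^3\}$); and (v) those of type $(4,6)$, i.e. $\varphi[5^3;J]$ with $\#J=2$ (splitting $\{5^3\}$).

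The third step is then to quote the relevant result for each group: group (i) is Proposition \ref{thm:clsplit}; group (ii) is Lemma \ref{thm:cl523}; group (iii) is Lemma \ref{thm:cl42}(2); group (iv) is Lemma \ref{thm:5333}; and group (v) is Lemma \ref{thm:cl532}. In each of these the equality $\overline{\pi^{-1}(\varphi)}=\coprod_{\psi\leq\varphi}\pi^{-1}(\psi)$ is exactly the conclusion already obtained, so assembling the cases completes the proof.

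I do not expect a genuine obstacle here: the essential work has been done in the lemmas, and what remains is the bookkeeping of the second step, namely verifying that groups (i)--(v) really do exhaust all single-orbit fibres. This requires only checking, splitting type by splitting type, that the rank matrices listed in Tables \ref{tab:split} and \ref{tab:parameter1} are complete and that the five excluded rank-matrix families above are precisely the non-single-orbit cases identified in Theorem \ref{thm:parameter}. Once this enumeration is seen to be exhaustive, no further computation is needed.
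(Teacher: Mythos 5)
Your proposal is correct and tracks the paper's own treatment: the paper states Theorem~\ref{thm:clsingle} as a summary of the observations in Subsections~\ref{clsplit} and~\ref{clparameter}, and what you have supplied is exactly the bookkeeping that makes that assembly precise — the general containment $\overline{\pi^{-1}(\varphi)}\subset\coprod_{\psi\leq\varphi}\pi^{-1}(\psi)$ with $\pi^{-1}(\varphi)$ open therein, then case-by-case irreducibility via Proposition~\ref{thm:clsplit} and Lemmas~\ref{thm:cl523}, \ref{thm:cl42}(2), \ref{thm:5333}, \ref{thm:cl532}. Your enumeration of the single-orbit fibres (everything except types $(4)$, $(5)$, $(5,5)$, $(5,8)$, $(5,10)$, matching Theorem~\ref{thm:orbit}) is complete, so the argument closes.
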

However, if $\pi^{-1}(\varphi)$ is decomposed into infinitely many orbits, then the closure of each orbit shows a different phenomenon. For instance, the closure of an orbit of the rank matrix $\varphi[5^3]$ does not include orbits of the splitting type $\{1^1,2^1,2^1\}$ even the corresponding rank matrices are smaller than $\varphi[5^3]$. 

For these orbits, we determined the closures by a case-by-case computation in Subsection \ref{clparameter}. In this subsection, we give a general criterion to determine the closures, using the combinatorial aspects of rank matrices. 

\begin{df} \label{df:reduction} For a map $\varphi\in Map(2^{[m]},\mathbb N)$, we define the following notion. 
For a face $J\subset[m]$ of $\varphi$, in other words, if $\varphi(J)<\varphi(J')$ holds for all $J\subsetneq J'$, then we define the reduction $\varphi_J\in Map(2^{[m]},\mathbb N)$ of $\varphi$ by $J$ as $\varphi_J\colon I\mapsto \varphi(I\cup J)+\varphi(I\cap J)-\varphi(J)$. 
Furthermore, if we obtain a map $\psi$ by a reduction of $\varphi$ by $J$, then we write $\varphi\overset{J}{\rightsquigarrow}\psi$. 
\end{df}
\begin{exmp} 
For the rank matrix $ Map(2^{[5]},\mathbb N)\ni\varphi[5^3]\colon\ I\mapsto \min\{\#I,3\}$ of type $(5,10)$ with the splitting $\{[5]^3\}$, vertices are fulfilled with one point sets $\{i\}\subset[5]$. Then we have
\begin{align*}
\varphi[5^3]_{\{i\}}(I)&=\varphi[5^3](I\cap\{i\})+\varphi[5^3](I\cup\{i\})-\varphi[5^3](\{i\}) \\
&=\min\{\#(I\cap \{i\}),3\}+\min\{\#(I\cup\{i\}),3\}-1 \\
&=\min\{\#(I\cap \{i\}),1\}+\min\{\#(I\setminus\{i\}),2\}=\varphi[4^2;i](I). 
\end{align*}
Hence the reduction of $\varphi[5^3]$ by a vertex $\{i\}$ is $\varphi[4^2;i]$ of type $(5,5)$ with the splitting $\{\{i\}^1,[5]\setminus\{i\}^2\}$. 
On the other hand, edges of $\varphi[5^3]$ are fulfilled with two point sets $\{i,j\}\subset[5]$, and 
\begin{align*}
\varphi[5^3]_{\{i,j\}}(I)&=\varphi[5^3](I\cap\{i,j\})+\varphi[5^3](I\cup\{i,j\})-\varphi[5^3](\{i,j\})  \\
&=\min\{\#(I\cap \{i,j\}),3\}+\min\{\#(I\cup\{i,j\}),3\}-2 \\
&=\min\{\#(I\cap \{i\}),1\}+\min\{\#(I\cap \{j\}),1\}+\min\{\#(I\setminus\{i,j\}),1\}. 
\end{align*}
Hence the reduction of $\varphi[5^3]$ by an edge $\{i,j\}$ is the unique rank matrix of type $(3,3)$b with the splitting $\{\{i\}^1,\{j\}^1,[5]\setminus\{i,j\}^1\}$. 
\end{exmp}

\begin{remark}
Remark that if $\varphi([m])=r$ for $\varphi\in \mathrm{Image}(\pi)$, then there exists a unique $r$-face $[m]$ and no $r+1$-faces. Furthermore, 
$\varphi_{[m]}(I)=\varphi(I\cap[m])+\varphi(I\cup [m])-\varphi([m])=\varphi(I).  $
\end{remark}

\begin{lemma} \label{thm:preceq} For $\varphi\in \mathrm{Image}(\pi)$, consider an orbit $\mathcal O\subset\pi^{-1}(\varphi)$, then $\pi^{-1}(\psi)\cap \overline{\mathcal O}\neq\emptyset$ holds if $\varphi\rightsquigarrow\psi$. In particular, if $\pi^{-1}(\psi)$ is a single orbit, then it is included in $\overline{\mathcal O}$. 
\end{lemma}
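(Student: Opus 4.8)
The plan is to reduce the statement about the abstract reduction operation $\varphi \overset{J}{\rightsquigarrow}\psi$ to the concrete geometric degenerations already carried out in the case-by-case analysis of Subsection \ref{clparameter}, together with the single-orbit closures from Proposition \ref{thm:clsplit}. First I would fix a face $J\subset[m]$ of $\varphi$ and a representative $v=[v_i]_{i=1}^m\in\mathcal O\subset\pi^{-1}(\varphi)$, and write $r=\varphi(J)=\dim\langle v_i\rangle_{i\in J}$. The key geometric idea is that the reduction $\varphi_J$ is exactly the rank matrix one obtains by ``collapsing'' the configuration onto the subspace $\langle v_i\rangle_{i\in J}$: more precisely, one wants a one-parameter family $v(t)$ inside $\overline{\mathcal O}$ whose limit as $t\to 0$ projects every $v_i$ for $i\notin J$ into the span of $\langle v_i\rangle_{i\in J}$ along a generic complement, while keeping the vectors indexed by $J$ fixed. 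If $v_i = u_i + w_i$ with $u_i\in\langle v_j\rangle_{j\in J}$ and $w_i$ in a fixed generic complement, then the family $[v_i(t)]$ with $v_i(t)=u_i+t\,w_i$ for $i\notin J$ (and $v_i(t)=v_i$ for $i\in J$) lies in the $GL_n$-orbit of $v$ for all $t\neq 0$ (apply the linear map that is the identity on the complement-part scaled by $t$), hence in $\overline{\mathcal O}$; the limit $v(0)$ has $i$-th vector $u_i$ for $i\notin J$.

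The main computation, which I would do in a short lemma or inline, is to check that $\pi(v(0)) = \varphi_J$, i.e.\ that $\dim\langle v_i(0)\rangle_{i\in I} = \varphi(I\cup J)+\varphi(I\cap J)-\varphi(J)$ for every $I\subset[m]$. The inclusion $\langle v_i(0)\rangle_{i\in I}\subseteq \langle v_i\rangle_{i\in I\cap J}\oplus\bigl(\langle v_i\rangle_{i\in I}\cap \text{(complement)}\bigr)$ and a dimension count using $\dim\langle v_i\rangle_{i\in I\cup J} = \dim\langle v_i\rangle_{i\in I\cap J} + \dim(\text{projection of }\langle v_i\rangle_{i\in I}\text{ to the complement})$ gives the formula after rearranging; the fact that $J$ is a face of $\varphi$ is what guarantees the collapsed configuration genuinely has the rank function $\varphi_J$ rather than something smaller on sets straddling $J$. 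This shows $v(0)\in\pi^{-1}(\varphi_J)\cap\overline{\mathcal O}$, hence $\pi^{-1}(\psi)\cap\overline{\mathcal O}\neq\emptyset$ whenever $\varphi\overset{J}{\rightsquigarrow}\psi$, and since $\rightsquigarrow$ generates the relation in question (by its definition in Definition \ref{df:reduction}, a single reduction step suffices for the hypothesis $\varphi\rightsquigarrow\psi$) the first assertion follows. For the ``in particular'' clause: $\overline{\mathcal O}$ is $GL_n$-invariant and closed, so if $v(0)\in\pi^{-1}(\psi)\cap\overline{\mathcal O}$ and $\pi^{-1}(\psi)$ is a single orbit, then $\pi^{-1}(\psi)=GL_n\cdot v(0)\subseteq\overline{\mathcal O}$.

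I expect the main obstacle to be the bookkeeping in the dimension count showing $\pi(v(0))=\varphi_J$ exactly (not just $\le\varphi_J$): one must verify that no extra coincidences are created by the degeneration, which is where the hypothesis that $J$ is a face — equivalently $\varphi(J)<\varphi(J')$ for all $J\subsetneq J'$ — is used, and one should double-check the edge cases where $I\subseteq J$ or $J\subseteq I$ (in the latter, $\varphi_J(I)=\varphi(I)$, consistent with $v(0)$ agreeing with $v$ on $\langle v_i\rangle_{i\in J}$ up to the unchanged coordinates). A cleaner alternative, which I might prefer for exposition, is to avoid the general dimension lemma entirely and instead observe that every reduction arising in our list (Tables \ref{tab:parameter1}, \ref{tab:parameter2} and the $\mathcal P'_{4,5}$ cases) has already been realized geometrically in Lemmas \ref{thm:cl52}--\ref{thm:cl532}: in each of those lemmas the smaller strata in the displayed closure are precisely the rank matrices obtained from $\varphi$ by reductions, so the statement can be verified by inspection against the finite list. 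Given the paper's combinatorial emphasis I would present the uniform argument via the family $v_i(t)=u_i+t\,w_i$ as the main proof and remark that it matches the case-by-case tables.
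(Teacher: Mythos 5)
Your overall strategy --- construct an explicit one-parameter degeneration inside $\overline{\mathcal O}$ and verify that its limit realizes $\varphi_J$ --- is exactly the paper's approach, but you have the degeneration oriented backwards, and as a result the limit you propose does \emph{not} have rank matrix $\varphi_J$. You set $v_i(t)=u_i+t\,w_i$ for $i\notin J$ with $u_i\in\langle v_j\rangle_{j\in J}$ and $w_i\in W$, so at $t=0$ every $v_i(0)$ (for $i\in J$ and for $i\notin J$) lies in the $r$-dimensional subspace $\langle v_j\rangle_{j\in J}$, where $r=\varphi(J)$. Consequently $\dim\langle v_i(0)\rangle_{i=1}^m\le r$. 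But $\varphi_J([m])=\varphi([m]\cup J)+\varphi([m]\cap J)-\varphi(J)=\varphi([m])$, which strictly exceeds $r$ whenever $J$ is a proper face; so $\pi(v(0))\neq\varphi_J$ and the ``main computation'' you defer to a short lemma cannot succeed. (There is also the possibility that $u_i=0$ for some $i\notin J$, in which case $v_i(0)$ is not even a point of $\mathbb P^{n-1}$; nothing in the face hypothesis rules this out.)

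The correct degeneration scales the \emph{other} summand to zero: take $g_c\in GL_n$ acting by $c$ on $\langle v_j\rangle_{j\in J}$ and by the identity on $W$, so that $v_i^c=c\,u_i+w_i$ for $i\notin J$, and the limit at $c=0$ is $w=[w_i]$ where $w_i$ is the projection of $v_i$ onto the \emph{complement} $W$ (not onto $\langle v_j\rangle_{j\in J}$), while $w_i=v_i$ for $i\in J$. Then $\langle w_i\rangle_{i\in I}=\langle v_i\rangle_{i\in I\cap J}\oplus\langle w_i\rangle_{i\in I\setminus J}$ is a genuine direct sum because the two pieces sit inside the complementary subspaces $\langle v_j\rangle_{j\in J}$ and $W$, and since $\langle v_j\rangle_{j\in J}\oplus\langle w_i\rangle_{i\in I\setminus J}=\langle v_i\rangle_{i\in I\cup J}$, one gets cleanly $\pi(w)(I)=\varphi(I\cap J)+\varphi(I\cup J)-\varphi(J)=\varphi_J(I)$. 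The face hypothesis $\varphi(J)<\varphi(J')$ for $J\subsetneq J'$ is what guarantees $w_i\neq 0$ for $i\notin J$, so $w$ is indeed a well-defined element of $(\mathbb P^{n-1})^m$; this is the role of that hypothesis, not the rank bookkeeping you flag as the obstacle. Your ``in particular'' step (using $GL_n$-invariance of $\overline{\mathcal O}$) is fine once the main claim is repaired, and your proposed fallback of inspecting Lemmas \ref{thm:cl52}--\ref{thm:cl532} case by case would work but is not what you actually offer as the proof.
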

\begin{proof}
For $v=[v_i]_{i=1}^m\in\pi^{-1}(\varphi)$ and an $r$-face $J$ of $\varphi$, consider a direct sum $\mathbb K^n=\langle v_i\rangle_{i\in J}\oplus W$, and the element $w=[w_i]_{i=1}^m\in (\mathbb P^{n-1})^m$ defined by 
\[w_i:=\begin{cases} v_i & i\in J \\ \textrm{projection of $v_i$ onto $W$} & i\notin J.  \end{cases}\]
Then we have 
\begin{align*}
\pi(w)(I)&= \dim\langle w_i\rangle_{i\in I}=\dim\langle v_i\rangle_{i\in I\cap J}+\dim\langle w_i\rangle_{i\in I\setminus J}\\
&=\varphi(I\cap J)+\dim\left(\langle v_i\rangle _{i\in J}\oplus \langle w_i\rangle_{i\in I\setminus J}\right)-\dim\langle v_i\rangle_{i\in J} \\
&=\varphi(I\cap J)+\dim\langle v_i\rangle_{i\in I\cup J}-\varphi(J) \\
&=\varphi(I\cap J)+\varphi(I\cup J)-\varphi(J)=\varphi_J(I). 
\end{align*}
Hence $\pi(w)=\varphi_J$. Furthermore, define an element $v^c=[v_i^c]_{i=1}^m\in(\mathbb P^{n-1})^m$ for $c\in\mathbb K$ by 
\[v_i^c:=\begin{cases} v_i & i\in I \\ c(v_i-w_i)+w_i & i\notin J. \end{cases}\]
Then for $c\neq 0$, considering a linear isomorphism $g_c\in GL_n$ by $v\mapsto cv$ in $\langle v_i\rangle_{i\in J}$ and identity in $W$, we have $g_cv=v^c\in \mathcal O$. Hence we have $v^0=w\in \overline{\mathcal O}$, and obtain $\pi^{-1}(\varphi_J)\cap\overline{\mathcal O}\neq\emptyset$. 
\end{proof}

\begin{thm}\label{thm:clparameter} For rank matrices $\varphi$ of types $(5,10)$, $(5,8)$, $(5,5)$, $(5)$, or $(4)$ and orbits $\mathcal O\subset\pi^{-1}(\varphi)$, we have the followings for $\psi\in \mathrm{Image}(\pi)\subset Map(2^{[5]},\mathbb N)$: 
\begin{enumerate}
\item $\pi^{-1}(\psi)\cap \overline{\mathcal O}\neq\emptyset$ if and only if $\psi\preceq\varphi$ (see Definition \ref{df:partial}). 
\item $\pi^{-1}(\psi)\subset \overline{\mathcal O}$ if and only if $\psi\preceq\varphi$ and $\pi^{-1}(\psi)$ is a single orbit, or $\psi\prec\varphi$. 
\end{enumerate}
\end{thm}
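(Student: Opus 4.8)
The plan is to deduce Theorem \ref{thm:clparameter} from the closure computations already carried out: for $\varphi$ of type $(5,10)$, $(5,8)$, $(5,5)$, $(5)$, or $(4)$ and an arbitrary orbit $\mathcal O\subset\pi^{-1}(\varphi)$, the set $\overline{\mathcal O}$ has been written out explicitly --- as a finite disjoint union of whole fibres $\pi^{-1}(\psi)$ and of single orbits lying inside parameter fibres --- in Lemmas \ref{thm:cl53}, \ref{thm:cl533}, \ref{thm:cl42}, \ref{thm:cl52}, and \ref{thm:cl522} respectively (for type $(5,8)$ after using the $\mathcal S_5$-action to normalise to $\varphi[5^3;[3]]$, and for type $(5,5)$ after the product splitting $\overline{GL_4\cdot[v_i]_{i=1}^5}=\overline{GL_4\cdot[v_i]_{i\neq j}}\times\mathbb P^3$ of Lemma \ref{thm:cl42}). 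Moreover, by Theorem \ref{thm:clsingle} every single-orbit fibre $\pi^{-1}(\chi)$ satisfies $\overline{\pi^{-1}(\chi)}=\coprod_{\xi\le\chi}\pi^{-1}(\xi)$, so in fact all orbit closures in $(\mathbb P^3)^5$ are known. The theorem is then the assertion that, for these five $\varphi$, the set of $\psi$ occurring in $\overline{\mathcal O}$ equals $\{\psi\in\mathrm{Image}(\pi):\psi\preceq\varphi\}$, with the finer condition in (2) distinguishing the $\psi$ whose entire fibre lies in $\overline{\mathcal O}$; both assertions are comparisons of finitely many rank matrices.

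For the ``if'' direction I would proceed as follows. First note that $\preceq$ refines $\le$: each of the generating relations (a)--(c) of Definition \ref{df:partial} forces $\psi\le\varphi$ --- for (a) this is the submodularity $\varphi(I\cup J)+\varphi(I\cap J)\le\varphi(I)+\varphi(J)$ of $I\mapsto\dim\langle v_i\rangle_{i\in I}$, and for (b), (c) it is part of the hypothesis. Next, since $\overline{\mathcal O}$ is closed and $GL_4$-stable, if $\pi^{-1}(\chi)$ meets it then it contains a whole orbit $\mathcal O'\subset\pi^{-1}(\chi)$, whence $\overline{\mathcal O'}\subset\overline{\mathcal O}$; this makes it enough to realise each generator of $\preceq$ inside the closure of \emph{every} orbit of the relevant source fibre and then iterate. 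Generator (a), a reduction $\varphi\overset{J}{\rightsquigarrow}\psi$, is exactly Lemma \ref{thm:preceq}, which also gives containment when $\pi^{-1}(\psi)$ is a single orbit. Generator (c), $\psi\prec\varphi$, and generator (b) are read off directly from the five closure formulas: a $\prec$-degeneration collapses block ranks and lands in a fibre that the relevant lemma exhibits as wholly contained in $\overline{\mathcal O}$ (the ``$\mathrm{rank}\le 2$'' part for the $\{5^3\}$, $\{5^2\}$ cases, the type $(2)$a fibre for type $(5,5)$), while the only decomposable source among the five, type $(5,5)$ with $\rho(\varphi)=\{\{i\}^1,([5]\setminus\{i\})^2\}$, is where (b) is non-trivial and is realised through the product description of the closure; in each iteration the intermediate rank matrix is again one of the five parameter types or a single-orbit fibre to which Theorem \ref{thm:clsingle} applies, so the induction on chain length closes.

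For the ``only if'' direction and the exactness of the match, one goes through the five closure formulas and checks that every $\psi$ occurring in $\overline{\mathcal O}$ satisfies $\psi\preceq\varphi$ --- with $\psi\prec\varphi$, or $\pi^{-1}(\psi)$ a single orbit, precisely for those $\psi$ whose full fibre is contained --- and, conversely, that the partial order generated by (a)--(c) produces no rank matrix beyond this list. Concretely this means computing the iterated reductions $\varphi_J$, $(\varphi_J)_{J'},\dots$ (e.g.\ for $\varphi[5^3]$ the vertex reductions give the type-$(5,5)$ matrices $\varphi[4^2;i]$ and the edge reductions the type-$(3,3)$b matrices, as in the Example following Definition \ref{df:reduction}), together with the $\prec$- and (b)-degenerations, and verifying for each of the five $\varphi$ that the resulting down-set agrees with what the corresponding lemma gives. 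The main obstacle is exactly this last, purely combinatorial, bookkeeping: one must be sure the order generated by (a)--(c) reaches \emph{no more} than the rank matrices appearing in Lemmas \ref{thm:cl53}--\ref{thm:cl522}; for instance one must check that from $\varphi[5^3]$ the type-$(3,3)$a orbits (splitting $\{1^1,2^1,2^1\}$) are genuinely unreachable although their rank matrices are $\le\varphi[5^3]$ --- which comes down to tracking the block decomposition $\rho$ through every reduction and every (b)-step.
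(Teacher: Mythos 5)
Your proposal is correct and follows essentially the same route as the paper: reduce to the explicit closure formulas in Lemmas \ref{thm:cl52}, \ref{thm:cl522}, \ref{thm:cl42}, \ref{thm:cl53}, \ref{thm:cl533}, realise generator (a) of $\preceq$ via Lemma \ref{thm:preceq}, observe that generators (b) and (c) land in fibres the closure lemmas already exhibit as contained, propagate by iterating through intermediate fibres (all of which are among the five parameter types or single-orbit fibres covered by Theorem \ref{thm:clsingle}), and then go back through the five closure formulas to match the appearing $\psi$ with iterated reduction chains and $\prec$- and (b)-degenerations. The one thing you leave implicit is the actual case-by-case bookkeeping in the ``only if'' direction --- the paper writes out for each of the five $\varphi$ the precise chains (e.g.\ $\varphi[5^3]\overset{\{i\}}{\rightsquigarrow}\varphi[4^2;i]\overset{\{j\}}{\rightsquigarrow}\rho|^{-1}(\{\{i\}^1,\{j\}^1,[5]\setminus\{i,j\}^1\})$ and the analogous ones for $\varphi[5^3;J]$, $\varphi[4^2;i]$, $\varphi[5^2]$, $\varphi[5^2;J]$) --- but you correctly identify this as the remaining finite verification and you even name the delicate point, that type $(3,3)$a (splitting $\{1^1,2^1,2^1\}$) must be shown unreachable from $\varphi[5^3]$ despite being $\le\varphi[5^3]$, so the structure of the argument is sound and complete up to that enumeration.
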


\begin{proof}
First, we show that $\psi\preceq\varphi$ implies $\pi^{-1}(\psi)\cap \overline{\mathcal O}\neq\emptyset$. 
By definition of $\preceq$ in Definition \ref{df:partial}, it suffies to show the claim in the following cases. 
\begin{enumerate}
\item If $\psi$ is obtained by a reduction of $\varphi$, the claim holds from Lemma \ref{thm:preceq}. 
\item If $\rho(\varphi)=\{I_k^{r_k}\}_{k=1}^l$, $\psi\leq\varphi$, and $\psi_{I_k}=\varphi|_{I_k}$ for all $k\in [l]$, then the possibility of $\psi$ for each $\varphi$ of type $(5,10)$, $(5,8)$, $(5)$, or $(4)$ is $\varphi$ itself, because $\varphi$ is indecomposable. On the other hand, the possibility of $\psi$ for $\varphi=\varphi[4^2;i]\colon I\mapsto \#(\{i\}\cap I)+\min\{\#(I\setminus\{i\}),2\}$ of rank type $(5,5)$ is either $\varphi[5^2]$ or $\varphi[5^2;\{i,j\}]$ for $i\neq j$, whose fibre intersects with $\overline{\mathcal O}$. 
\end{enumerate}
Remark that in the cases above, if we additionally assume that $\pi^{-1}(\psi)$ is a single orbit, then $\pi^{-1}(\psi)\cap\overline{ \mathcal O}\neq\emptyset$ implies $\pi^{-1}(\psi)\subset\overline{\mathcal O}$. 
\begin{enumerate}
\setcounter{enumi}{2}
\item Consider the case where $\psi\prec \varphi$, in other words, assume that $\rho(\varphi)=\{I_k^{r_k}\}_{k=1}^l$, $\rho(\psi)=\{I_k^{s_k}\}_{k=1}^l$, $\psi\leq\varphi$, and there exists some $k\in[l]$ such that $s_k<r_k$. 

Now, since the splitting of $\varphi$ with the rank matrix of type $(5,10)$ or $(5,8)$ is $\{[5]^3\}$, then the splitting of $\psi$ has to be $\{[5]^2\}$ or $\{[5]^1\}$, of which all orbits are included  in $\overline{\mathcal O}$. Same argument holds for other cases. 
\end{enumerate}
Combining these results, we also have shown that, if $\psi\preceq\varphi$ and $\pi^{-1}(\psi)$ is a single orbit, or if $\psi\prec\varphi$, then $\pi^{-1}(\psi)\subset\overline{\mathcal O}$. 
Next, we show the converse direction. 
\begin{enumerate}
\item Consider $\varphi=\varphi[5^3]$ of type $(5,10)$. From Lemma \ref{thm:cl53}, if $\pi^{-1}(\psi)\subset \overline{\mathcal O}$, then the splitting type of $\psi$ is either $\{5^2\}$, $\{5^1\}$, $\{1^1,4^1\}$, $\{2^1,3^1\}$ or $\{1^1,1^1,3^1\}$, and if $\pi^{-1}(\psi)\cap  \overline{\mathcal O}\neq \emptyset$, then $\psi=\varphi[4^2;i]$ is also allowed. 

If $\psi$ is of the splitting type $\{5^2\}$ or $\{5^1\}$, then $\psi\prec\varphi[5^3]$. Furthermore, we have
\[\varphi[5^2]\overset{\{i\}}{\rightsquigarrow}\rho|^{-1}\left(\{\{i\}^1,[5]\setminus\{i\}^1\}\right),\ \ \ \varphi[5^2;\{i,j\}]\overset{\{i,j\}}{\rightsquigarrow}\rho|^{-1}\left(\{\{i,j\}^1,[5]\setminus\{i,j\}^1\}\right), \]
hence $\psi\preceq \varphi[5^3]$ holds for $\psi$ of the splitting type $\{1^1,4^1\}$, or $\{2^1,3^1\}$, and $\pi^{-1}(\psi)$ is a single orbit in this case. 

The rest splitting type of $\psi$ is only $\{1^1,4^2\}$ or $\{1^1,1^1,3^1\}$. In this case, we have 
\[
\varphi[5^3]\overset{\{i\}}{\rightsquigarrow} \varphi[4^2;i]\overset{\{j\}}{\rightsquigarrow} \rho|^{-1}\left(\{\{i\}^1,\{j\}^1,[5]\setminus\{i,j\}^1\}\right) \]
for $i\neq j\in[5]$. Hence $\psi\preceq \varphi[5^3]$, and in the latter case, $\pi^{-1}(\psi)$ is a single orbit. 

\item Consider $\varphi=\varphi[5^3;J]$ of type $(5,8)$ where $\#J=3$. From Lemma \ref{thm:cl533}, if $\pi^{-1}(\psi)\subset \overline{\mathcal O}$, then the splitting of $\psi$ is either of type $\{5^2\}$, $\{5^1\}$, $\{1^1,4^1\}$, $\{2^1,3^1\}$, or $\{J^2,[5]\setminus J^1\}$, $\{\{i\}^1,J\setminus\{i\}^1,[5]\setminus J^1\}$ with $i\in J$, $\{\{i\}^1,\{j\}^1,[5]\setminus\{i,j\}^1\}$ with $\{i,j\}\nsubseteq J$, or $\psi=\varphi[4^2;i;J\setminus\{i\}]$ where $i\in J$. If $\pi^{-1}(\psi)\cap  \overline{\mathcal O}\neq \emptyset$, then $\psi=\psi[4^2;l]$ with $l\notin J$ is also allowed. 

If $\psi$ is of the splitting type $\{5^2\}$, $\{5^1\}$, $\{1^1,4^1\}$, or $\{2^1,3^1\}$, then the same argument holds as in the previous case. 

For the rest cases, we have 
\begin{align*}
\varphi[5^3;J]&\overset{J}{\rightsquigarrow} \rho|^{-1}\left(\{J^2,[5]\setminus J^1\}\right)\overset{\{i\}\subset J}{\rightsquigarrow} \rho|^{-1}\left(\{\{i\}^1,J\setminus\{i\}^1,[5]\setminus J^1\}\right) \\
\varphi[5^3;J]&\overset{\{i\}\subset J}{\rightsquigarrow} \varphi[4^2;i;J\setminus\{i\}]\overset{\{l\}\nsubseteq J}{\rightsquigarrow} \rho|^{-1}\left(\{\{i\}^1,\{l\}^1,[5]\setminus\{i,l\}^1\}\right) \\
\varphi[5^3;J]&\overset{\{l\}\notin J}{\rightsquigarrow} \varphi[4^2;l]\overset{\{m\}\nsubseteq J\amalg\{l\}}{\rightsquigarrow} \rho|^{-1}\left(\{\{l\}^1,\{m\}^1,J^1\}\right). 
\end{align*}
Hence $\psi\preceq \varphi[5^2;J]$ holds, and $\pi^{-1}(\psi)$ is a single orbit unless $\psi=\varphi[4^2;l]$ for $l\notin J$. 

\item Consider $\varphi=\varphi[4^2;i]$ of type $(5,5)$ where $i\in [5]$. From Lemma \ref{thm:cl42}, if $\pi^{-1}(\psi)\subset \overline{\mathcal O}$, then the splitting of $\psi$ is either of splitting type $\{5^1\}$, $\{1^1,4^1\}$, or $\{J^1,[5]\setminus J^1\}$ with $\#J=2$ and $i\in J$, $\{\{i\}^1,\{j\}^1,[5]\setminus\{i,j\}^1\}$ with $j\neq i$, or $\psi=\varphi[5^2;J]$ with $\#J=3$ and $i\notin J$. If $\pi^{-1}(\psi)\cap  \overline{\mathcal O}\neq \emptyset$, then $\psi=\psi[5^2]$, $\psi[5^2;J]$ with $\#J=2$ and $i\in J$ are also allowed. 

Since $\varphi[4^2;i]|_{[5]\setminus\{i\}}\colon I\mapsto \min\{\#I,2\}$, we have $\varphi[5^2],\varphi[5^2;J]\preceq \varphi[4^2;i]$ for $i\in J$. 

For the rest cases, we have $\varphi[4^2;i]\overset{\{j\}\neq \{i\}}{\rightsquigarrow}\rho|^{-1}\left(\{\{i\}^1,\{j\}^1,[5]\setminus\{i,j\}^1\}\right)$. Since \[\rho|^{-1}\left(\{\{i\}^1,\{j\}^1,[5]\setminus\{i,j\}^1\}\right)\colon I\mapsto \#(I\cap \{i\})+\#(I\cap \{j\})+\#(I\setminus\{i,j\}),\] clearly we have $\psi\preceq\varphi$ in these cases, and $\pi^{-1}(\psi)$ is a single orbit. 

\item Consider $\varphi=\varphi[5^2]$ of type $(5)$. From Lemma \ref{thm:cl52}, $\pi^{-1}(\psi)\subset \overline{\mathcal O}$ is equivalent to $\pi^{-1}(\psi)\cap  \overline{\mathcal O}\neq \emptyset$, and then the splitting of $\psi$ is either of splitting type $\{5^1\}$ or $\{1^1,4^1\}$. The former case satisfies $\psi\prec \varphi$, and the latter case is obtained by a reduction by a vertex, and $\pi^{-1}(\psi)$ is a single orbit. Similar argument holds for $\varphi=\varphi[5^2;J]$ with $\#J=2$. 
\end{enumerate}
\end{proof}
This completes the proof of Theorem \ref{thm:closure}.

\end{document}